\setlist[description]{leftmargin=\parindent}
\newtheorem{theorem}{Theorem}
\newtheorem{definition}{Definition}
\newtheorem{proposition}{Proposition}
\newtheorem{lemma}{Lemma}
\newtheorem{corollary}{Corollary}
\newtheorem{assumption}{Assumption}
\newtheorem{problem}{Problem}
\newtheorem*{problem2A}{Problem 2A}
\newtheorem*{problem2B}{Problem 2B}
\newtheorem*{problem2C}{Problem 2C}
\theoremstyle{remark}
\newtheorem{remark}{Remark}
\newtheorem{example}{Example}
\def\qed{\hfill \vrule height 5pt width 5pt depth 0pt \medskip}
\def\diag{{\rm diag}}
\def\im{{\rm Im}}
\newcommand{\Am}{\mathcal{A}}
\newcommand{\Bm}{\mathcal{B}}
\newcommand{\Cm}{\mathcal{C}}
\newcommand{\Dm}{\mathcal{D}}
\newcommand{\Em}{\mathcal{E}}
\newcommand{\Gm}{\mathcal{G}}
\newcommand{\Hm}{\mathcal{H}}
\newcommand{\Jm}{\mathcal{J}}
\newcommand{\Km}{\mathcal{K}}
\newcommand{\Pm}{\mathcal{P}}
\newcommand{\Sm}{\mathcal{S}}
\newcommand{\Tm}{\mathcal{T}}
\newcommand{\Vm}{\mathcal{V}}
\newcommand{\Wm}{\mathcal{W}}
\newcommand{\Fm}{\mathcal{F}}
\newcommand{\Xm}{\mathcal{X}}
\newcommand{\Ym}{\mathcal{Y}}
\newcommand{\Zm}{\mathcal{Z}}
\newcommand{\Vmin}{\Bm}
\newcommand{\Vmout}{\Cm}
\newcommand{\beq}{\begin{equation}}
\newcommand{\eeq}{\end{equation}}
\newcommand{\beqa}{\begin{eqnarray}}
\newcommand{\eeqa}{\end{eqnarray}}
\newcommand{\beqan}{\begin{eqnarray*}}
\newcommand{\eeqan}{\end{eqnarray*}}
\newcommand{\bite}{\begin{itemize}}
\newcommand{\eite}{\end{itemize}}
\newcommand{\benu}{\begin{enumerate}}
\newcommand{\eenu}{\end{enumerate}}
\newcommand\rednote[1]{\textcolor{red}{#1}}
\definecolor{darkpastelgreen}{rgb}{0.01, 0.75, 0.24}
\definecolor{fuchsia}{HTML}{FF00FF}
\definecolor{softblue}{HTML}{007FFF}
\definecolor{softblue}{HTML}{007FFF}
\definecolor{darkgreen}{HTML}{009900}
\begin{document}
\title{Geometric Control Theory Over Networks: Minimal Node Cardinality Disturbance Decoupling Problems}
\author{Luca Claude Gino Lebon, \IEEEmembership{Graduate Student Member, IEEE}, and Claudio Altafini, \IEEEmembership{Senior Member, IEEE}
\thanks{Work supported in part by the Swedish Research Council (grant n. 2024-04772 to C.A.) and by the ELLIIT framework program at Link\"oping University. }
\thanks{L. C. G. Lebon and C. Altafini are with the Department of Electrical Engineering, Division of Automatic Control,
        Link\"oping University, SE-581 83 Link\"oping, Sweden
        {\tt\small luca.lebon@liu.se,claudio.altafini@liu.se}}%
}

\maketitle

\begin{abstract}
In this paper we show how to formulate and solve disturbance decoupling problems over networks while choosing a minimal number of input and output nodes. 
Feedback laws that isolate and eliminate the impact of disturbance nodes on specific target nodes to be protected are provided using state, output, and dynamical feedback. 
For that, we leverage the fact that when reformulated in terms of sets of nodes rather than subspaces, the controlled and conditioned invariance properties admit a simple graphical interpretation. 
For state and dynamical feedback, the minimal input and output cardinality solutions can be computed exactly in polynomial time, via min-cut/max-flow algorithms. 
\end{abstract}
\begin{IEEEkeywords}
Disturbance Decoupling; Geometric Control; Networked Systems.
\end{IEEEkeywords}
\section{Introduction}
\label{sec:introduction}
The Disturbance Decoupling Problem (DDP) is a basic problem in control theory, and consists in rendering a set of variables (which we refer to as targets) insensitive to disturbances entering into the model of the system. 
It is known since the late Sixties that for linear systems the DDP can be solved in terms of invariant subspaces \cite{basile1969controlled,wonham1970decoupling}, and in fact it has been one of the main motivating problems behind the development of geometric control theory \cite{basile1992controlled,wonham2013linear,trentelman2012control}. 
It relies on notions such as controlled and conditioned invariance, and on the feedback synthesis that follows from them.
These concepts are normally formulated for subspaces and, while well-understood algebraically and geometrically, they remain computationally challenging because of the numerical ill-conditioning associated with operations on subspaces, such as invariance, intersection, sum etc.

In this paper we show that when applied to (linear) networked control systems in which disturbances, targets, measurements and controls all act on individual nodes (rather than on linear combination of nodes), many of the concepts of geometric control can be simplified drastically.  
For instance, subspaces can be replaced by sets of nodes, and invariance of a subspace by lack of outgoing edges from a set of nodes. The controlled invariance property of a set of nodes corresponds to having edges that stay in the set or, when they exit it, land on the input nodes. 
Similarly, conditioned invariance corresponds to a set of nodes whose edges can leave the set only if they start on the outputs. 

These interpretations of controlled/conditioned invariance, which were first presented in our knowledge in \cite{conte2019invariance}, are here developed in full and exploited.
Their simplicity and graphical meaning, in fact, pave the way for a systematic application of all tools from geometric control theory to networked systems. 
In particular, for the DDP, we show that its simplest version, DDP by State Feedback (DDPSF), corresponds to checking a set containment condition involving the maximal controlled invariant set of nodes. Also more advanced forms of DDP, like DDP by Output Feedback (DDPOF) and DDP by Dynamical Feedback (DDPDF) can be reformulated along similar lines. 
All three correspond graphically to being able to block (or to compensate exactly for) all paths from disturbances to targets on the graph of the system through a feedback design.

The simplicity of the setting allows also to solve a more complex and novel problem, namely determine what is the most parsimonious selection of input (and output) nodes that can solve a DDP. We formulate this problem as a minimal cardinality problem with respect to the number of inputs in the DDPSF, and with respect to the number of inputs and outputs in the DDPOF and DDPDF. 
All can be mapped to graphical conditions, which can be solved optimally for DDPSF and DDPDF in polynomial time using variants of min-cut/max-flow.

In all three cases, we also provide feedback laws that achieve the decoupling. 
For the DDPSF, the feedback consists in exactly canceling the arcs immediately upstream of the input nodes. 
For the DDPOF, feasibility corresponds to output and input laying on the inner and outer boundary of a set of nodes which must be simultaneously controlled and conditioned invariant, and the output feedback corresponds to canceling all edges that cross the boundary. 
For the DDPDF, an observer-based feedback is instead used, in which the observer corresponds to the nodes lying between the maximal controlled invariant set and the minimal conditioned invariant set.

While the classical geometric conditions written in terms of the maximal controlled invariant subspace and the minimal conditioned invariant subspace are necessary and sufficient for the solvability of the DDP (by state, output and dynamical feedback), the conditions written in terms of node sets are instead only sufficient. Nevertheless, their intuitive and simple characterization leads to remarkably practical solutions.

\noindent {\it Literature review.} The classical literature on DDP is summarized in well-known geometric control books such as \cite{basile1992controlled,wonham2013linear,trentelman2012control}. When it comes to networked systems, we notice that there exists another graphical approach to the DDP, based on the notion of structured systems \cite{commault1997geometric,dion1993feedback,dion1994simultaneous}, i.e., on seeking conditions valid for almost all values of the system parameters. 
In this context, it is worth mentioning e.g. \cite{commault1991disturbance,van1991structure}, which provide necessary and sufficient conditions for the DDP from state feedback in terms of the structure at infinity of the structured system, and \cite{van1996disturbance}, where these conditions are extended to the DDP with measurement feedback. 
While the structural approach is related to ours, a one-to-one mapping is impossible because this approach uses different conventions in its problem formulation (e.g., nodes that carry inputs are separated from ``state'' nodes, and from output nodes), and the input, output and disturbance matrices are not elementary columns/rows as in our approach, meaning that an explicit association between invariant sets of nodes and invariant subspaces is absent. 
In any case, none of the classical papers deals with the minimal cardinality control and output assignment problems which are at the core of our work.

Apart from the aforementioned structured systems and \cite{conte2019invariance}, some other sporadic efforts have been made to translate the invariance conditions into network terms. For instance, the paper \cite{monshizadeh2015disturbance} establishes sufficient conditions for disturbance decoupling in diffusive-type multi-agent systems by means of graph partitions.

A broad literature, starting with \cite{willems1981disturbance}, exists also on the joint problem of DDP and stability, or more generally on pole placement in DDP.  
In this perspective, recent studies have formulated the DDP as a constraint within an optimization framework \cite{sarsilmaz2024revisiting}.
A joint minimal input cardinality DDP and pole placement problem is investigated for instance in  \cite{clark2015input} for general linear systems, using very different tools such as matroids and submodular optimization.
On a network, the extra requirements associated with pole allocation may hamper the graphical interpretation and analysis. We therefore decided to disregard them, since, as it is well known, stability can always be recovered by adding sufficiently many (negative) self-loops, until diagonal dominance is achieved in the state update matrix \cite{horn2012matrix}. Self-loops do not interfere with our DDP formulation. 

Finally, while in this paper we are not focusing on any specific application, it is worth mentioning that there exists a wide range of networked systems in which DDPs are of relevance. Apart from classical problems of protecting strategic nodes in real-world networks such as power grids, traffic systems, IT infrastructures, examples from the recent literature include \cite{morbidi2007leader}, where leader-follower formation control is framed as a DDP, or \cite{van2000noncooperative}, where DDP is formulated within a dynamic game framework. Another context in which our tools could be applied is fault detection and isolation, see for instance \cite{jia2020fault}, or \cite{lunze2006control}, where control reconfiguration after actuator failures is shown to be equivalent to a DDP. The related problem of attack detection and identification can also be rephrased along similar lines, see, e.g., \cite{pasqualetti2013attack}.

The rest of the paper is organized as follows. After a recap of geometric control theory in Section~\ref{sec:geom-contr}, the DDP problem over networks is solved in Section~\ref{sec:standard-DDP}, while in Section~\ref{sec:min-node-DDP} we formulate and solve the minimal-node DDP. The synthesis of feedback laws is in Section~\ref{sec:feedback}.
The proofs of all results are given in the Appendices.

A preliminary version of this paper \cite{LebonCDC25} deals only with the DDPSF case and lacks the proofs of the main results. The material on DDPOF and DDPDF is presented here for the first time.

MATLAB functions implementing the results presented in this article are available in the GitLab repository \cite{mycode2026}.
\section{Elements of geometric control theory over networks}
\label{sec:geom-contr}

In this section, standard material from geometric control books such as \cite{basile1992controlled,trentelman2012control,wonham2013linear} is reformulated for networked systems.

\subsection{Notation}
We consider a digraph $ \Gm = ( \Vm, \, \Em , A )$, where $ \Vm =\{ v_1, \, \ldots, v_n\}$ is a set of $n$ nodes and $\Em $ a set of $q$ edges over $ \Vm$, with the convention that $ (v_i , \, v_j) \in \Em $ means $ v_i \to v_j $, i.e., $ v_i $ is the tail and $ v_j $ is the head of the edge. 
$ A\in\mathbb{R}^{n\times n}$ is the adjacency matrix associated with $ \Gm$: $ A_{ji} $ is the weight of the edge $ (v_i , \, v_j) \in \Em $. 

Denote $ \Am $ the class of adjacency matrices of $ \Gm$ having ``support" in $ \Em$: $\Am = \{ A \in\mathbb{R}^{n\times n} \; \text{ s.t.} \; A_{ji}\neq 0\;  \text{iff} \; (v_i , \, v_j) \in \Em\; \text{and} \; A_{ji}= 0\;  \text{iff} \; (v_i , \, v_j) \notin \Em \}$.
A special matrix in $ \Am $ is the indicator matrix $\mathbb{A}$, i.e., the unweighted version of $A$ of entries $\mathbb{A}_{ij}=\mathbb{I}(A_{ij} \neq 0)$, with $\mathbb{I}(\cdot)$ denoting the indicator function, that is, 1 if the condition is true and 0 otherwise.

Given a digraph $ \Gm$, a subset $ \Zm \subset \Vm$ is called a terminal subset of nodes if there is no outgoing edge from $ \Zm$, i.e., $\forall\,(v_i , \, v_j) \in \Em $ if $ v_i \in \Zm $ then also $ v_j \in \Zm$.
A directed path $ \Pm$ from node $v_i$ to node $v_j$ is a sequence of distinct nodes $v_i, v_{i+1}, \dots , v_{j-1}, v_j$ s.t. $(v_k,v_{k+1})\in \mathcal{E}$ $\forall\,k\in\{i,i+1, \dots, j-1\}$: $\Pm = \{ v_i,  \dots , v_j \} $ (or $ \Pm^{v_i, v_j} $ when we need to specify the extremities of the path). 
The number of edges in the path is called the length of the path. Given a set of nodes $ \Zm \subseteq \Vm$, with a slight abuse of notation, we can use $ \Zm $ to express subspaces of $ \mathbb{R}^{n}$: $\Zm \sim \left\{  x \in \mathbb{R}^n  \; \text{s.t.} \; x_i \neq 0\;  \forall \; i \in \Zm  \; \text{and} \; x_i= 0\; \forall \; i  \notin \Zm \right\}$.
In the following, we explicitly fix a basis of canonical vectors for a vector subspace associated with a set of nodes.
Let $ e_i $ be an elementary column vector (i.e., the vector having $1$ on th $i$-th slot and $0$ elsewhere).
\begin{definition}[\textbf{Subspace associated with a set of nodes}]\label{def:sub}
Given a set of nodes $\Zm\subseteq\Vm$, the vector subspace of $\mathbb{R}^n$ associated with $\Zm$ is $\textnormal{span}\{e_i\;\text{s.t.}\;v_i\in\Zm\}$.
\end{definition}
Note that in general a vector subspace can have non-canonical generators, whenever its basis vectors necessarily involve linear combinations of canonical  vectors. Throughout the paper, we identify $ \Zm $ and the associated vector subspace, and sometimes write $ x \in \Zm $.
\begin{remark}
Since a vector subspace contains the $0$ element, the identification ``sets of nodes'' $ \sim $ ``vector subspaces'' holds almost always, except for a zero measure set. This is the same principle behind the structural linear systems approach \cite{commault1997geometric}. The only case in which this fact has to be made explicit is when we have equalities involving some matrix $A\in \Am$, such as $ A x = y $, where $ x \in \Xm$, $ y\in \Ym$ with $ \Xm , \, \Ym $ sets of nodes / vector subspaces.
In this case, to avoid complications, we always replace $ A$ with the indicator matrix $ \mathbb{A}$, and consider the expression $ \mathbb{A} \Xm=\Ym$. 
This is not needed when we have inequalities such as $ A \Xm \subseteq \Ym$.
\end{remark}

For any $ \Zm \subset \Vm$, the notation $ \Zm^\perp$ is used for both the complement set $ \Vm \smallsetminus \Zm$ and the subspace orthogonal to (the vector space) $ \Zm $ in $ \mathbb{R}^n$.



\begin{remark}
For each subspace $ \Zm$, $ \exists $ a full column rank matrix $ Z$ s.t. $ \im Z = \Zm $ and $ \ker Z =\{0\}$.  
Such matrix is called a basis matrix of $\Zm$. When $ \Zm $ is a set of nodes, then $ Z$ is a collection of elementary column vectors (one column $ e_i$ for each node $ v_i \in \Zm$). In this case, the sum of vector subspaces and their union can be identified: $ \Zm_1 + \Zm_2 \sim \Zm_1 \cup \Zm_2 $. Moreover, $Z^\top$ is full-row rank and $\ker Z^\top=(\im Z)^\perp$. If $ \Sm \subset \Vm$ denotes another set of nodes, then $ \Sm \cap \ker Z^\top$ corresponds to the component of $\Sm$ orthogonal to $\im Z$. In this setting, set subtraction and orthogonal complement coincide because all subspaces involved are spanned by disjoint subsets of orthonormal basis vectors. We therefore write $\Sm \cap \ker Z^\top\sim\Sm \smallsetminus \im Z$.
\end{remark}
On $ \Gm = ( \Vm, \, \Em , A )$, for any subset of nodes $ \Wm\subseteq\Vm$ we can define the in- and out-boundary as follows.
\begin{definition}\label{def:boundary}
The in-boundary of $\Wm\subseteq\Vm$ in the digraph $ \Gm = ( \Vm, \, \Em , A )$ is the set of nodes in $\Wm$ that have at least one outgoing edge leading to a node in $\Wm^\perp$, i.e., $\partial_-(\Wm,A):=\{v_i\in\Wm\;\text{s.t.}\;\exists\,(v_i,v_j)\in\Em,\, v_j\in\Wm^\perp\}$. Similarly, the out-boundary of $\Wm$ is the set of nodes in $\Wm^\perp$ that have at least one incoming edge from $\Wm$, i.e., $\partial_+(\Wm,A):=\{v_j\in\Wm^\perp\;\text{s.t.}\;\exists\,(v_i,v_j)\in\Em,\, v_i\in\Wm\}$.
\end{definition}
From the previous definition, it is obvious that $\partial_\pm(\Wm,A)=\partial_\mp(\Wm^\perp,A^\top)$.
Notice that in general $ \partial_+(\Wm,A) \subseteq A \partial_-(\Wm,A)$ and the inclusion can be strict. 

\subsection{Linear systems over networks}

In $ \Gm $, we assume that two subsets of nodes play the role of inputs and outputs, denoted, respectively, $ \Vmin \subseteq \Vm$ and $ \Vmout \subseteq \Vm$.
We can form a linear time-invariant control system having a state update matrix equal to $A$, input matrix $B\in\mathbb{R}^{n\times m}$ composed of $m$ elementary column vectors corresponding to $ \Vmin $, and output matrix $C\in\mathbb{R}^{p\times n}$ having $p$ elementary rows corresponding to $ \Vmout$:
\beq
\begin{split}
\dot x & =  Ax + B u, \\ 
y & = C x ,
\end{split}
\label{eq:lin-syst1}
\eeq
where we have associated one state variable $ x_i \in \mathbb{R} $ of the vector $ x = [ x_1 \ldots x_n]^\top $ to each node $ v_i $ of the graph.

In some cases, we need to highlight the topological structure underlying the system~\eqref{eq:lin-syst1}.
For that, notice that, by construction, 
$ \Vmin= \im B$ and  $ \Vmout = \im C^\top=(\ker C)^\perp$.

\subsection{Invariance, controlled invariance and conditioned invariance for node sets}
In this section we formulate the concepts of invariance for sets of nodes, rather than for subspaces, as it is standard in the literature (see, e.g., Chapters 4--6 of \cite{trentelman2012control}).

\begin{definition}
\label{def:A-invar1}
A subset $ \Zm \subset \Vm$ is an invariant (or $A$-invariant) set of nodes if $  A \Zm \subseteq \Zm $.
\end{definition}

Definition~\ref{def:A-invar1}, a standard concept also for subspaces, corresponds to $Ax\in\Zm$, $\forall\,x\in\Zm$. For network systems like~\eqref{eq:lin-syst1} we can have also an equivalent topological characterization, valid for the entire class $ \Am$. 

\begin{proposition}
\label{prop:invA}
Consider the system~\eqref{eq:lin-syst1}. A subset $ \Zm \subset \Vm $ is an $A$-invariant set of nodes if and only if any of the following equivalent conditions is met:
  \benu[label=(\alph*)]
  \item $ \Zm$ is a terminal subset of nodes;
\item If $ Z $ is the basis matrix of $ \Zm $ (i.e., $ \im Z = \Zm $ and $Z$ is full column rank), $ \exists$ a matrix $X$ s.t. $AZ = ZX$. 
  \eenu
\end{proposition}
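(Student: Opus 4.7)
The plan is to show the chain of equivalences: $A$-invariance $\Leftrightarrow$ (a) $\Leftrightarrow$ (b), each by a direct elementary argument based on the identification of $\Zm$ with the span of the standard basis vectors $\{ e_i \, : \, v_i \in \Zm\}$.

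For $A$-invariance $\Leftrightarrow$ (a), I would pick an arbitrary $v_i \in \Zm$, identify it with the standard basis vector $e_i \in \mathbb{R}^n$, and observe that $A e_i$ equals the $i$-th column of $A$, whose nonzero entries, by the definition of the class $\Am$, sit in row $j$ precisely when $(v_i, v_j) \in \Em$. Hence $\supp(A e_i)$ coincides with the set of out-neighbors of $v_i$ in $\Gm$. Requiring $A e_i \in \Zm$ for every $v_i \in \Zm$ is therefore equivalent to requiring that every out-neighbor of every node in $\Zm$ also lie in $\Zm$, which is exactly the terminal property in (a). Since the statement is a subspace inclusion $A\Zm \subseteq \Zm$ rather than an equality, the set/subspace identification applies as is, without having to replace $A$ by the indicator matrix $\mathbb{A}$.

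For $A$-invariance $\Leftrightarrow$ (b), the argument is the standard algebraic characterization of invariant subspaces. If $A\Zm \subseteq \Zm = \im Z$, then each column $A z_k$ of $AZ$ lies in $\im Z$, so there exists $x_k$ with $A z_k = Z x_k$; collecting these into $X = [x_1 \,\cdots\, x_m]$ yields $AZ = ZX$, with $X$ uniquely determined because $Z$ has full column rank. Conversely, from $AZ = ZX$ one immediately gets $A \, \im Z = \im(AZ) = \im(ZX) \subseteq \im Z$, i.e., $A\Zm \subseteq \Zm$.

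There is no genuine obstacle in this argument: the only point to watch is that $A$ is taken as a specific element of $\Am$ rather than as $\mathbb{A}$, but because both conditions on the right-hand side are inclusions (or purely existential), the conclusion holds uniformly over $\Am$ and in particular agrees with what one would get from $\mathbb{A}$. Finally, since the proposition is announced to follow as a special case of Proposition~\ref{prop:AB-invar-equiv-char} (by specializing $\Vmin = \emptyset$), I would either present the short self-contained proof above or simply defer to that more general statement.
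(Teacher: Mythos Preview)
Your proposal is correct and aligns with the paper's approach: the paper simply defers this proposition to the more general Proposition~\ref{prop:AB-invar-equiv-char} (taking $\Vmin=\emptyset$), whose proof of item~(a) is precisely the $e_i$-support argument you give, and whose item~(b) is dismissed as standard. Your self-contained version is exactly that specialization, so there is nothing to add.
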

\noindent The proof of this proposition (as well as all other proofs) is given in the Appendix.

For a system like \eqref{eq:lin-syst1}, the concepts of controlled invariance (also called $ (A,B)$-invariance) and conditioned invariance (or $ (C,A)$-invariance) serve to expand the notion of invariance.

\begin{definition}
\label{def:contr-invar}
A subset $ \Zm \subset \Vm$ is a controlled invariant (or $(A,B)$-invariant) set of nodes for~\eqref{eq:lin-syst1} if $ A \Zm \subseteq \Zm + \Vmin $ (with $\Vmin=\im B$ and $B$ with elementary columns).
\end{definition}

The following proposition provides three equivalent characterizations of $ (A,B)$-invariance, one topological and the other two valid for a given matrix $A$. 
\begin{proposition}
\label{prop:AB-invar-equiv-char}
$ \Zm \subset \Vm$ is an $ (A,B)$-invariant set of nodes for~\eqref{eq:lin-syst1} if and only if any of the following equivalent conditions is met:
\benu[label=(\alph*)]
\item $\forall\,(v_i , \, v_j) \in \Em $, $ v_i \in \Zm \; \Longrightarrow \; v_j \in \Zm \cup \Vmin$;\label{item:a_prop:AB-invar-equiv-char}
\item If $ Z $ is the basis matrix of $ \Zm $, $ \exists $ matrices $X$, $U$ s.t. $AZ = Z X +BU$;\label{item:b_prop:AB-invar-equiv-char}
\item $\exists$ a matrix $F\in\mathbb{R}^{m\times n}$, called a ``friend'' of $\Zm$, s.t. $\Zm$ is $(A-BF)$-invariant, i.e., $(A-BF)\Zm\subseteq\Zm$.\label{item:c_prop:AB-invar-equiv-char}

\eenu
\end{proposition}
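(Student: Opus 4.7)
The plan is to prove the chain of equivalences \ref{item:a_prop:AB-invar-equiv-char} $\Leftrightarrow$ Definition~\ref{def:contr-invar} $\Leftrightarrow$ \ref{item:b_prop:AB-invar-equiv-char} $\Leftrightarrow$ \ref{item:c_prop:AB-invar-equiv-char}, using the standard dictionary between sets of nodes and subspaces established in the Notation subsection. Throughout, $Z$ denotes a basis matrix of $\Zm$ (a collection of elementary columns indexed by the nodes in $\Zm$), and I will exploit that $\im Z + \im B = \Zm + \Vmin$ coincides with the union of node sets $\Zm \cup \Vmin$.

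First, for \ref{item:a_prop:AB-invar-equiv-char} $\Leftrightarrow$ Def.~\ref{def:contr-invar}, I would observe that since the condition $A\Zm \subseteq \Zm + \Vmin$ is an inequality, by the Remark it can be handled with $A$ directly (no need to replace it with $\mathbb{A}$). For any $v_i \in \Zm$ the image $A v_i$ is the $i$-th column of $A$, whose nonzero entries are exactly at the indices $j$ such that $(v_i,v_j)\in\Em$; hence $A v_i \in \Zm + \Vmin$ if and only if every such $v_j$ belongs to $\Zm \cup \Vmin$. Ranging over all $v_i \in \Zm$ gives the equivalence.

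Next, for Def.~\ref{def:contr-invar} $\Leftrightarrow$ \ref{item:b_prop:AB-invar-equiv-char}, I would use that $\im Z = \Zm$ and $\im B = \Vmin$. If $A\Zm \subseteq \Zm + \Vmin$, then every column of $AZ$ lies in $\im Z + \im B$, so we can write each column as $Z x_k + B u_k$; stacking the $x_k$ and $u_k$ gives matrices $X,U$ with $AZ = ZX + BU$. The converse is immediate: any $x \in \Zm$ equals $Zw$ for some $w$, and $Ax = AZw = ZXw + BUw \in \Zm + \Vmin$.

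Finally, for \ref{item:b_prop:AB-invar-equiv-char} $\Leftrightarrow$ \ref{item:c_prop:AB-invar-equiv-char}, the easy direction \ref{item:c_prop:AB-invar-equiv-char} $\Rightarrow$ \ref{item:b_prop:AB-invar-equiv-char} takes a friend $F$ with $(A-BF)\Zm \subseteq \Zm$, so that $(A-BF)Z = ZX$ for some $X$, giving $AZ = ZX + B(FZ)$, i.e., \ref{item:b_prop:AB-invar-equiv-char} with $U := FZ$. The direction \ref{item:b_prop:AB-invar-equiv-char} $\Rightarrow$ \ref{item:c_prop:AB-invar-equiv-char} is the only nontrivial step and the main obstacle: given $U$, I need to construct an $F\in\mathbb{R}^{m\times n}$ defined on the entire state space satisfying $FZ = U$. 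Since $Z$ has full column rank, a left inverse $Z^{\dagger}$ exists, and setting $F := U Z^{\dagger}$ works (equivalently, one prescribes $F$ on $\Zm$ by $F v_i := U e_i$ for the basis vectors corresponding to nodes of $\Zm$ and extends arbitrarily on $\Zm^{\perp}$). Then $(A-BF)Z = AZ - BU = ZX$, which by Definition~\ref{def:A-invar1} means $(A-BF)\Zm \subseteq \Zm$, completing the cycle.
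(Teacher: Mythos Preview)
Your proof is correct and, for the graph-theoretic equivalence \ref{item:a_prop:AB-invar-equiv-char} $\Leftrightarrow$ Definition~\ref{def:contr-invar}, essentially matches the paper's argument: both reduce to the observation that the $i$-th column of $A$ has nonzero entries exactly at indices $j$ with $(v_i,v_j)\in\Em$, and this column lies in the coordinate subspace $\Zm+\Vmin$ iff all such $j$ belong to $\Zm\cup\Vmin$. The only difference is that the paper simply cites standard references (Theorem~4.2 of \cite{trentelman2012control}, Property~4.1.3 of \cite{basile1992controlled}) for the equivalence of \ref{item:b_prop:AB-invar-equiv-char} and \ref{item:c_prop:AB-invar-equiv-char} with $(A,B)$-invariance, whereas you supply the short direct arguments (column-by-column decomposition for \ref{item:b_prop:AB-invar-equiv-char}, and $F:=UZ^\dagger$ for \ref{item:c_prop:AB-invar-equiv-char}); your version is thus more self-contained but not materially different.
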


\begin{definition}
A subset $ \Sm \subset \Vm$ is a conditioned invariant (or $ (C,A)$-invariant) set of nodes for~\eqref{eq:lin-syst1} if $ A ( \Sm \smallsetminus\Vmout)\subseteq \Sm$ (with $\Vmout=(\ker C)^\perp$ and $C$ with elementary rows).
\end{definition}
Similarly to above, equivalent characterizations of $ (C,A)$-invariance can be formulated.
\begin{proposition}\label{prop:CA-invar-equiv-char}
$\Sm $ is a $ (C,A)$-invariant set of nodes if and only if any of the following equivalent conditions is met:
\benu[label=(\alph*)]
\item $\forall\,(v_i , \, v_j) \in \Em $, $ v_i \in \Sm$, $ v_i \notin \Vmout$ $ \Longrightarrow \; v_j \in \Sm$;\label{item:a_prop:CA-invar-equiv-char}
\item $\exists$ a matrix $H\in\mathbb{R}^{n\times p}$, called a ``friend'' of $\Sm$, s.t. $\Sm$ is $(A-HC)$-invariant, i.e., $(A-HC)\Sm\subseteq\Sm$.\label{item:b_prop:CA-invar-equiv-char}
\eenu
\end{proposition}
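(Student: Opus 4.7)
The plan is to treat the two equivalences separately, mirroring the proof of Proposition~\ref{prop:AB-invar-equiv-char}.

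For (a), I would use the indicator-matrix reduction exactly as in the $(A,B)$-invariance proof. Since $A(\Sm\smallsetminus\Vmout)\subseteq\Sm$ is an inclusion, it depends only on the support of $A$, hence is equivalent to $\mathbb{A}(\Sm\smallsetminus\Vmout)\subseteq\Sm$. I would then check this generator-by-generator: for each $v_i\in\Sm\smallsetminus\Vmout$, the product $\mathbb{A}e_i=\sum_{\{j:\,(v_i,v_j)\in\Em\}}e_j$ is a sum of elementary vectors indexed by the outgoing neighbors of $v_i$, and because $\Sm$ is itself spanned by canonical basis vectors, this sum lies in $\Sm$ iff every such $v_j$ belongs to $\Sm$. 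Linearity over a basis of $\Sm\smallsetminus\Vmout$ then yields both directions of the equivalence between the definition and condition~(a).

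For (b), I would cite it as the standard dual of item~$\ref{item:c_prop:AB-invar-equiv-char}$ of Proposition~\ref{prop:AB-invar-equiv-char}; see, e.g., Theorem~5.3 of \cite{trentelman2012control} or Property~4.1.3 of \cite{basile1992controlled}. A short sketch: the easy direction is that if $(A-HC)\Sm\subseteq\Sm$ then for $x\in\Sm\cap\ker C=\Sm\smallsetminus\Vmout$ we have $Cx=0$, so $Ax=(A-HC)x\in\Sm$. For the converse, the cleanest route is duality: taking orthogonal complements in $A(\Sm\cap\ker C)\subseteq\Sm$ and using $(\Sm\cap\ker C)^\perp=\Sm^\perp+\im C^\top$ yields $A^\top\Sm^\perp\subseteq\Sm^\perp+\im C^\top$, which is controlled invariance of $\Sm^\perp$ for the system with state and input matrices $A^\top$ and $C^\top$. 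Applying the friend characterization of Proposition~\ref{prop:AB-invar-equiv-char} to this dual system supplies an $F$ with $(A^\top-C^\top F)\Sm^\perp\subseteq\Sm^\perp$, and transposing plus setting $H=F^\top$ recovers $(A-HC)\Sm\subseteq\Sm$.

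The main obstacle is only bookkeeping: carefully justifying the indicator-matrix reduction in (a) (legitimate precisely because the statement is an inclusion and therefore depends only on the edge set), and keeping track of the orthogonal-complement manipulations in the duality step for (b). No substantive new idea is required beyond what already appears in Proposition~\ref{prop:AB-invar-equiv-char}, so the proof should be short and largely symmetric to the one given there.
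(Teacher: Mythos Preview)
Your proposal is correct and follows essentially the same route as the paper: for (a) you use the indicator-matrix reduction and check the inclusion on canonical generators of $\Sm\smallsetminus\Vmout$, exactly as the paper does (your sum notation $\mathbb{A}e_i=\sum e_j$ is just a cleaner rendering of the paper's $r_i+e_j$ decomposition), and for (b) both you and the paper defer to the standard literature, with your added duality sketch being extra detail the paper omits. The only discrepancy is the precise theorem number you cite in \cite{trentelman2012control} (the paper points to Theorem~5.5 rather than 5.3), which is a bibliographic nit, not a mathematical one.
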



\begin{definition}
A pair of subsets $(\Sm,\,\Zm)$ of $\Vm$ is called a $(C,A,B)$-pair of node sets if
\bite
\item $\Sm\subseteq\Zm$;
\item $\Sm$ is $(C,A)$-invariant;
\item $\Zm$ is $(A,B)$-invariant.
\eite
\end{definition}

\subsection{Maximal controlled invariance, and minimal conditioned invariance for node sets}
\label{sec:maximalcontrolled}
The algorithms in Propositions~\ref{prop:Z} and~\ref{prop:S} can be found in~\cite{conte2019invariance} and return sets of nodes (rather than subspaces). They are the network counterpart of well-known algorithms in the literature, see Propositions~\ref{prop:contr_max_semi} and \ref{prop:cond_min_semi} in Appendix~\ref{app:subsp-max-min} \cite{basile1969controlled,wonham1970decoupling}.
More details on the differences with the classical case are given in Section~\ref{sec:rel}.

\begin{proposition}[\textbf{Maximal Controlled Invariant Node Set} $\mathbf{\Zm^\circ}$]\label{prop:Z}
Given a set $\Zm\subseteq\Vm$ and the recursion

\bite
\item$ \Zm_0 = \Zm$;
\item$ \Zm_{k+1} = \Zm_k \smallsetminus \{ v_i \in \Zm_k\,\,\emph{s.t.}\,\exists \; \text{at least one}\,\,(v_i, \, v_j ) \in \Em\,\, \text{with}\,\,v_j \notin \Zm_k \cup \Vmin \}$,
\eite
$\exists$ an index $r\leq n-1$ s.t. $\Zm_{r+p}=\Zm_{r}$, $\forall\,p\geq1$. The set $\Zm^\circ\coloneqq\Zm_r$ is the maximal control invariant subset of $\Zm$.
\end{proposition}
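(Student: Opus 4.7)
The plan is to establish three facts in turn: termination of the recursion, controlled invariance of the limiting set, and maximality among all controlled invariant subsets of $\Zm$.

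First I would observe that the recursion is monotonically decreasing by construction, since $\Zm_{k+1}$ is obtained from $\Zm_k$ by removing nodes. Because $\Vm$ is finite and $\Zm_k \subseteq \Vm$ for every $k$, the descending chain $\Zm_0 \supseteq \Zm_1 \supseteq \Zm_2 \supseteq \ldots$ must stabilize at some finite index $r \leq |\Zm|$, so that $\Zm_{r+1} = \Zm_r$ and hence $\Zm_{r+p} = \Zm_r$ for all $p \geq 1$.

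Next I would verify that $\Zm^\ast := \Zm_r$ is $(A,B)$-invariant. The equality $\Zm_{r+1} = \Zm_r$ means that no node of $\Zm_r$ is removed at the next step, i.e., for every $v_i \in \Zm_r$ and every edge $(v_i, v_j) \in \Em$, we have $v_j \in \Zm_r \cup \Vmin$. This is precisely condition~$\ref{item:a_prop:AB-invar-equiv-char}$ of Proposition~\ref{prop:AB-invar-equiv-char}, so $\Zm^\ast$ is $(A,B)$-invariant, and clearly $\Zm^\ast \subseteq \Zm$ by construction.

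Finally, for maximality I would show by induction on $k$ that every $(A,B)$-invariant subset $\Wm \subseteq \Zm$ satisfies $\Wm \subseteq \Zm_k$. The base case $\Wm \subseteq \Zm_0 = \Zm$ is immediate. For the inductive step, assume $\Wm \subseteq \Zm_k$ and take any $v_i \in \Wm$. By the topological characterization of $(A,B)$-invariance applied to $\Wm$, every edge $(v_i, v_j) \in \Em$ satisfies $v_j \in \Wm \cup \Vmin \subseteq \Zm_k \cup \Vmin$. Hence $v_i$ is not removed at step $k+1$, i.e., $v_i \in \Zm_{k+1}$, and so $\Wm \subseteq \Zm_{k+1}$. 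Evaluating at $k = r$ yields $\Wm \subseteq \Zm^\ast$, which is the desired maximality.

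The only subtle point, and the one I would make sure to state carefully, is the use of $\Wm$'s own invariance (rather than just $\Wm \subseteq \Zm_k$) in the inductive step: it is the containment $v_j \in \Wm \cup \Vmin$ that allows the enlargement to $\Zm_k \cup \Vmin$, and without it one could not conclude that $v_i$ survives the pruning. Everything else is bookkeeping on finite sets.
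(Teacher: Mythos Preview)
Your proof is correct and complete: termination by monotonicity in a finite set, invariance of the fixed point via the topological characterization of Proposition~\ref{prop:AB-invar-equiv-char}\ref{item:a_prop:AB-invar-equiv-char}, and maximality by the inductive containment $\Wm\subseteq\Zm_k$ for every $(A,B)$-invariant $\Wm\subseteq\Zm$. The paper itself does not supply a proof of Proposition~\ref{prop:Z}; it simply attributes the algorithm to \cite{conte2019invariance} (and, via that, to the classical subspace algorithms of \cite{basile1969controlled,wonham1970decoupling}) and records the monotonicity $\Zm_{k+1}\subseteq\Zm_k$ as a remark. Your argument is exactly the standard one those references contain, so there is nothing to contrast.
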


By construction $\Zm_{k+1}\subseteq\Zm_{k}$, therefore $\Zm^\circ\subseteq\Zm_0$. 
Notice that the maximal control invariant node set is a function of $ \Zm$, $ \Vmin$ and of the topology of $ \Gm$: $ \Zm^\circ = \Zm^\circ(\Vmin, \Zm, A)$. 
$ \Zm^\circ $ can possibly be an empty set.

\begin{proposition}[\textbf{Minimal Conditioned Invariant Node Set} $\mathbf{\Sm^\circ}$]\label{prop:S}
Given a set $\Sm\subseteq\Vm$ and the recursion
\bite
\item $ \Sm_0 = \Sm$;
\item $ \Sm_{k+1}= \Sm_k \cup \{ v_j \notin \Sm_k\,\,\emph{s.t.}\,\,\exists \; \text{at least one} \,\,(v_i , \, v_j ) \in \Em\,\,\text{with}\,\,v_i \in \Sm_k \smallsetminus \Vmout \} $,
\eite
$\exists$ an index $r\leq n-1$ s.t. $\Sm_{r+p}=\Sm_{r}$, $\forall\,p\geq1$. The set $\Sm^\circ\coloneqq\Sm_r$ is the minimal conditioned invariant superset of $\Sm$.
\end{proposition}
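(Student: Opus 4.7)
The plan is to establish three things in sequence: termination of the recursion, the fact that the limit set $\Sm^\ast$ is $(C,A)$-invariant and contains $\Sm$, and finally minimality among all such supersets. The argument is dual in structure to the one for $\Zm^\ast$ in Proposition~\ref{prop:Z}, and relies on the topological characterization of $(C,A)$-invariance given in item~\ref{item:a_prop:CA-invar-equiv-char} of Proposition~\ref{prop:CA-invar-equiv-char}.

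First I would observe that the recursion is monotonically non-decreasing: by construction $\Sm_{k}\subseteq\Sm_{k+1}$, with new nodes only added from $\Vm\smallsetminus\Sm_k$. Since $\Sm_k\subseteq\Vm$ and $|\Vm|=n<\infty$, there must exist a finite index $r\leq n-|\Sm|$ at which the sequence stabilizes, i.e., $\Sm_{r+1}=\Sm_r$, and by induction $\Sm_{r+p}=\Sm_r$ for all $p\geq1$. Set $\Sm^\ast:=\Sm_r$. The inclusion $\Sm\subseteq\Sm^\ast$ is immediate from $\Sm_0=\Sm$ and monotonicity.

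Next I would verify that $\Sm^\ast$ is $(C,A)$-invariant using the topological criterion of Proposition~\ref{prop:CA-invar-equiv-char}\ref{item:a_prop:CA-invar-equiv-char}. Suppose, for contradiction, that $\exists\,(v_i,v_j)\in\Em$ with $v_i\in\Sm^\ast\smallsetminus\Vmout$ and $v_j\notin\Sm^\ast$. Since $\Sm^\ast=\Sm_r$, this means $v_i\in\Sm_r\smallsetminus\Vmout$ and $v_j\notin\Sm_r$, so $v_j$ satisfies the condition for inclusion at step $r+1$, forcing $v_j\in\Sm_{r+1}$. This contradicts $\Sm_{r+1}=\Sm_r$, hence no such edge exists and $\Sm^\ast$ is $(C,A)$-invariant.

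Finally, for minimality, let $\Tm$ be any $(C,A)$-invariant set with $\Sm\subseteq\Tm$. I would show by induction on $k$ that $\Sm_k\subseteq\Tm$. The base case $\Sm_0=\Sm\subseteq\Tm$ is given. For the inductive step, assume $\Sm_k\subseteq\Tm$ and let $v_j\in\Sm_{k+1}\smallsetminus\Sm_k$. By definition of the recursion, $\exists\,(v_i,v_j)\in\Em$ with $v_i\in\Sm_k\smallsetminus\Vmout\subseteq\Tm\smallsetminus\Vmout$. Applying the topological characterization of $(C,A)$-invariance to $\Tm$ yields $v_j\in\Tm$, so $\Sm_{k+1}\subseteq\Tm$. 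Taking $k=r$ gives $\Sm^\ast\subseteq\Tm$, proving that $\Sm^\ast$ is the smallest $(C,A)$-invariant superset of $\Sm$. No step presents a real obstacle; the only subtlety is to carefully invoke the node-wise characterization~\ref{item:a_prop:CA-invar-equiv-char} in both the invariance check and the minimality induction, rather than working with the subspace inclusion $A(\Tm\smallsetminus\Vmout)\subseteq\Tm$ directly.
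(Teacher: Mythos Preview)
Your proof is correct. The paper does not actually supply its own proof of this proposition; it simply states the result and attributes the algorithm to \cite{conte2019invariance} and the classical literature \cite{basile1969controlled,wonham1970decoupling}. Your argument is the standard direct one (termination by monotonicity and finiteness, invariance by stabilization, minimality by induction using the topological characterization of Proposition~\ref{prop:CA-invar-equiv-char}\ref{item:a_prop:CA-invar-equiv-char}), and it is complete as written.

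One purely notational remark: in the paper the symbol $\Tm$ is reserved for the set of target nodes, so using it for a generic $(C,A)$-invariant superset in the minimality step could cause confusion; consider renaming it (e.g., $\Sm'$ or $\Rm$) if this proof is to be inserted into the manuscript.
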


By construction $\Sm_{k}\subseteq\Sm_{k+1}$, therefore $\Sm_0\subseteq\Sm^\circ$.
Similarly to above, we have $ \Sm^\circ = \Sm^\circ(\Vmout, \Sm, A)$, and $ \Sm^\circ $ can possibly be equal to $ \Vm$.

\subsection{Duality}
It is well known that the notions of controlled and conditioned invariance are dual notions. 
Here we repeat these arguments for sets of nodes, rather than subspaces.

\begin{lemma}[\textbf{Property 3.1.2 of \cite{basile1992controlled}}]\label{lem:dual}
Given a linear map identified by the matrix $A:\mathbb{R}^n\to\mathbb{R}^n$, and any two subsets $ \Xm, \Ym \subset \Vm$, the following equivalence holds:
\begin{equation*}
A\mathcal{X}\subseteq\mathcal{Y}\iff A^\top\mathcal{Y}^\perp\subseteq\mathcal{X}^\perp.
\end{equation*}
\end{lemma}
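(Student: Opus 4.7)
My plan is to reduce the claim to the standard adjoint identity $\langle A^\top y,\,x\rangle = \langle y,\,Ax\rangle$ available on $\mathbb{R}^n$. Although the lemma is stated for node subsets $\mathcal{X},\mathcal{Y}\subset\mathcal{V}$, under the identification recalled earlier in the paper these are genuine subspaces of $\mathbb{R}^n$, spanned by disjoint collections of canonical basis vectors, so that the symbol $(\cdot)^\perp$ simultaneously denotes ``set complement in $\mathcal{V}$'' and ``orthogonal complement in $\mathbb{R}^n$''. Once this is acknowledged, the proof is the classical subspace duality argument, essentially copied from Property~3.1.2 of \cite{basile1992controlled}.

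For the forward implication, I would fix an arbitrary $y \in \mathcal{Y}^\perp$ and verify that $A^\top y$ annihilates every vector of $\mathcal{X}$. Given any $x \in \mathcal{X}$, the hypothesis $A\mathcal{X}\subseteq\mathcal{Y}$ gives $Ax\in\mathcal{Y}$, hence $\langle y, Ax\rangle = 0$, and the adjoint identity yields $\langle A^\top y, x\rangle = 0$. Since $x\in\mathcal{X}$ is arbitrary, $A^\top y\in\mathcal{X}^\perp$; since $y\in\mathcal{Y}^\perp$ is arbitrary, $A^\top\mathcal{Y}^\perp\subseteq\mathcal{X}^\perp$.

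The converse then follows by applying the forward implication to the triple $(A^\top,\mathcal{Y}^\perp,\mathcal{X}^\perp)$: the assumption $A^\top\mathcal{Y}^\perp\subseteq\mathcal{X}^\perp$ is turned into $(A^\top)^\top(\mathcal{X}^\perp)^\perp\subseteq(\mathcal{Y}^\perp)^\perp$, which collapses to $A\mathcal{X}\subseteq\mathcal{Y}$ by $(A^\top)^\top = A$ and the involutive nature of $(\cdot)^\perp$. The only (mild) obstacle is to confirm that this involution is compatible with the node-set interpretation, but this is immediate: $\mathcal{V}\smallsetminus(\mathcal{V}\smallsetminus\mathcal{Z}) = \mathcal{Z}$ for any $\mathcal{Z}\subseteq\mathcal{V}$, which matches the subspace-level identity $(\mathcal{Z}^\perp)^\perp = \mathcal{Z}$ for subspaces spanned by canonical basis vectors.
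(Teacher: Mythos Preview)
Your argument is correct and is precisely the classical duality proof one finds in \cite{basile1992controlled}. Note, however, that the paper does not supply its own proof of this lemma: it is stated as Property~3.1.2 of \cite{basile1992controlled} and left without proof, so there is nothing further to compare against.
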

\begin{proposition}[\textbf{Property 4.1.3 of \cite{basile1992controlled}}]\label{prop:dual}
The orthogonal complement of a $(C,A)$- (resp. $(A,B)$-) invariant set is a $(A^\top,C^\top)$- (resp. $(B^\top,A^\top)$-) invariant set.
\end{proposition}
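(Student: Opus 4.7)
The plan is to apply Lemma~\ref{lem:dual} directly to each defining inclusion, after rewriting the set subtractions as intersections with orthogonal complements, as justified by the second Remark of the Notation subsection.

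First, I would handle the $(C,A)$ case. Let $\Sm$ be $(C,A)$-invariant, so by definition $A(\Sm\smallsetminus\Vmout)\subseteq\Sm$. Since $\Vmout=\im C^\top$ and set subtraction coincides with intersection with the orthogonal complement in this elementary-basis setting, this reads $A(\Sm\cap\Vmout^\perp)\subseteq\Sm$. Apply Lemma~\ref{lem:dual} with $\Xm=\Sm\cap\Vmout^\perp$ and $\Ym=\Sm$ to obtain $A^\top\Sm^\perp\subseteq(\Sm\cap\Vmout^\perp)^\perp=\Sm^\perp+\Vmout=\Sm^\perp+\im C^\top$, which is exactly the defining inclusion for $\Sm^\perp$ to be $(A^\top,C^\top)$-invariant (treating $A^\top$ as state matrix and $C^\top$ as input matrix).

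Next, I would handle the dual $(A,B)$ case symmetrically. Let $\Zm$ be $(A,B)$-invariant, so $A\Zm\subseteq\Zm+\Vmin$ with $\Vmin=\im B$. Apply Lemma~\ref{lem:dual} with $\Xm=\Zm$ and $\Ym=\Zm+\Vmin$ to get $A^\top(\Zm+\Vmin)^\perp\subseteq\Zm^\perp$, i.e., $A^\top(\Zm^\perp\cap\Vmin^\perp)\subseteq\Zm^\perp$. Since $\Vmin^\perp=\ker B^\top=(\im B)^\perp$ and $(\ker B^\top)^\perp=\im B=\Vmin$, rewriting the intersection as set subtraction gives $A^\top(\Zm^\perp\smallsetminus\Vmin)\subseteq\Zm^\perp$, which is precisely the definition of $(B^\top,A^\top)$-invariance of $\Zm^\perp$ (with $A^\top$ the state matrix and $B^\top$ the output matrix, noting that the role of $\Vmout$ is now played by $(\ker B^\top)^\perp=\Vmin$).

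The arguments are essentially one-line applications of Lemma~\ref{lem:dual}, so there is no serious obstacle. The only point requiring care is bookkeeping: one must correctly identify which of $\im B$, $\im C^\top$, $\ker B^\top$, $\ker C$ plays the role of the input/output subspace in each of the four invariance notions, and invoke the identification $\Sm\cap\ker Z^\top\sim\Sm\smallsetminus\im Z$ from the second Remark at the right moment to interchange set subtractions and orthogonal intersections. Once this bookkeeping is fixed, both implications follow immediately from Lemma~\ref{lem:dual}.
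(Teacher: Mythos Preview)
Your proof is correct. Note, however, that the paper does not actually supply its own proof of this proposition: it is stated with attribution to Property~4.1.3 of \cite{basile1992controlled} and left unproved, as it is a classical duality fact. Your argument---a direct application of Lemma~\ref{lem:dual} to each defining inclusion, combined with the identification of set subtraction with orthogonal intersection from the second Remark---is precisely the standard proof one finds in the geometric control literature, so there is nothing to compare against and no gap to flag.
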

When applied to maximal controlled / minimal conditioned invariant node sets, we get the following duality result. 
\begin{proposition}\label{prop:dual_net}
Given the node sets $\Zm^\circ=\Zm^\circ(\Vmin,\,\Vm\smallsetminus\Tm, A)$ and $\Sm^\circ=\Sm^\circ(\Vmout,\,\Dm, A)$, where $ \Dm, \, \Tm \subset \Vm$ are node sets s.t. $ \Dm \cap \Tm =\emptyset$, we have
\begin{align*}
\Zm^{\circ\perp}&=\Sm^\circ(\Vmin,\,\Tm, A^\top);\\
\Sm^{\circ\perp}&=\Zm^\circ(\Vmout,\,\Vm\smallsetminus\Dm,A^\top),
\end{align*}
where $\Sm^\circ(\Vmin,\,\Tm, A^\top)$ is the minimal $(B^\top,A^\top)$-invariant node set containing $\Tm$ when the output set is $ \Vmin$ and $\Zm^\circ(\Vmout,\,\Vm\smallsetminus\Dm, A^\top)$ is the maximal $(A^\top,C^\top)$-invariant node set contained in $\Vm\smallsetminus\Dm$ when the input set is $ \Vmout$.
\end{proposition}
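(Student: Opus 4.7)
The plan is to prove both equalities by the standard duality argument: combine Proposition~\ref{prop:dual} (which transforms $(A,B)$-invariance into $(B^\top,A^\top)$-invariance under orthogonal complement, and vice versa) with the extremality (maximality/minimality) characterizations of $\Zm^\ast$ and $\Sm^\ast$. Each identity reduces to showing two inclusions, and the two identities are fully symmetric, so it suffices to present the first in detail and note that the second follows by the same argument applied to the dual system $(A^\top, C^\top, B^\top)$.

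For the first identity $\Zm^{\ast\perp}=\Sm^\ast(\Vmin,\,\Tm,A^\top)$, I would first show the inclusion ``$\supseteq$''. By definition $\Zm^\ast=\Zm^\ast(\Vmin,\Vm\smallsetminus\Tm,A)$ is $(A,B)$-invariant and satisfies $\Zm^\ast\subseteq\Vm\smallsetminus\Tm$, hence $\Tm\subseteq\Zm^{\ast\perp}$. By Proposition~\ref{prop:dual}, $\Zm^{\ast\perp}$ is $(B^\top,A^\top)$-invariant. Thus $\Zm^{\ast\perp}$ is a $(B^\top,A^\top)$-invariant set containing $\Tm$, so by the minimality of $\Sm^\ast(\Vmin,\Tm,A^\top)$ among such sets, we get $\Sm^\ast(\Vmin,\Tm,A^\top)\subseteq\Zm^{\ast\perp}$.

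For the reverse inclusion ``$\subseteq$'', I would dualize in the opposite direction. The set $\Sm^\ast(\Vmin,\Tm,A^\top)$ is $(B^\top,A^\top)$-invariant, so again by Proposition~\ref{prop:dual} its orthogonal complement $\Sm^\ast(\Vmin,\Tm,A^\top)^\perp$ is $(A,B)$-invariant. Moreover, from $\Tm\subseteq\Sm^\ast(\Vmin,\Tm,A^\top)$ we obtain $\Sm^\ast(\Vmin,\Tm,A^\top)^\perp\subseteq\Tm^\perp=\Vm\smallsetminus\Tm$. Hence $\Sm^\ast(\Vmin,\Tm,A^\top)^\perp$ is an $(A,B)$-invariant subset of $\Vm\smallsetminus\Tm$, and by maximality of $\Zm^\ast$ it is contained in $\Zm^\ast$. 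Taking orthogonal complements reverses the inclusion, giving $\Zm^{\ast\perp}\subseteq\Sm^\ast(\Vmin,\Tm,A^\top)$, which together with the previous step proves the first identity.

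The second identity $\Sm^{\ast\perp}=\Zm^\ast(\Vmout,\,\Vm\smallsetminus\Dm,A^\top)$ I would prove in exactly the same way, swapping the roles of controlled and conditioned invariance: $\Sm^\ast=\Sm^\ast(\Vmout,\Dm,A)$ is $(C,A)$-invariant with $\Dm\subseteq\Sm^\ast$, so $\Sm^{\ast\perp}\subseteq\Vm\smallsetminus\Dm$ is $(A^\top,C^\top)$-invariant by Proposition~\ref{prop:dual}, and maximality of $\Zm^\ast(\Vmout,\Vm\smallsetminus\Dm,A^\top)$ yields one inclusion; conversely, the orthogonal complement of $\Zm^\ast(\Vmout,\Vm\smallsetminus\Dm,A^\top)$ is $(C,A)$-invariant and contains $\Dm$, so minimality of $\Sm^\ast$ gives the other inclusion. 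No genuine obstacle is expected: the role of the hypothesis $\Dm\cap\Tm=\emptyset$ is only to ensure that the sets $\Vm\smallsetminus\Tm$ and $\Dm$ are both nonempty and compatible with each other when the two identities are used jointly (e.g.\ in the DDP synthesis); it plays no role in proving either identity in isolation. The only care needed is to consistently interpret $(\cdot)^\perp$ as set complement in $\Vm$ (identified with orthogonal complement in $\mathbb{R}^n$ on the basis-of-nodes side), which is legitimate here since all sets involved are spanned by disjoint subsets of canonical basis vectors.
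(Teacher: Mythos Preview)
Your proof is correct and follows essentially the same approach as the paper: both rely on Proposition~\ref{prop:dual} to transfer invariance to the complement and then invoke the extremality (maximality/minimality) of $\Zm^\ast$ and $\Sm^\ast$. The only cosmetic difference is that the paper phrases the extremality step as a cardinality-based contradiction, whereas you argue by direct double inclusion; the underlying logic is identical.
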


\subsection{Node set vs subspace characterization}\label{sec:rel}

Rather than the {\em sets of nodes} $\Zm^\circ$ and $\Sm^\circ$ computed in Propositions~\ref{prop:Z} and \ref{prop:S}, the classical literature on geometric control makes use of {\em subspaces}, the maximal controlled invariant subspace $\Zm^\ast$, and the minimal conditioned invariant subspace $\Sm^\ast$. 
These are computed through the recursions of Propositions~\ref{prop:contr_max_semi} and \ref{prop:cond_min_semi} in Appendix~\ref{app:subsp-max-min}.
We now discuss the difference between $\Zm^\ast$, $\Sm^\ast$ and $\Zm^\circ$, $\Sm^\circ$, assuming the recursions start from the same initial conditions.
In particular, the following Theorem states that the subspaces $\Zm^\ast$ and $\Sm^\ast$ need not coincide with the node sets $\Zm^\circ$ and $\Sm^\circ$.
\begin{theorem}\label{thm:relat}
As vector subspaces, $\Sm^\circ\supseteq\Sm^\ast$ and $\Zm^\circ\subseteq\Zm^\ast$. In particular:
\benu[label=(\alph*)]
    \item If $x\in\Sm^\ast$ is s.t. generically $x_i\neq0\implies v_i\in\Sm^\circ$;\label{item:S_1}
    \item If $v_i\in\Zm^\circ\implies x\in\Zm^\ast$, generically $x_i\neq 0$.\label{item:Z_1}
\eenu
\end{theorem}

\begin{remark}\label{rem:relat}
Theorem~\ref{thm:relat} shows that there are no nonzero coordinates in the generators of $\Sm^\ast$ that are not nodes of $\Sm^\circ$, and there are no nodes in $\Zm^\circ$ that are not nonzero coordinates in $\Zm^\ast$. The reverse implications of items~$\ref{item:S_1}$ and $\ref{item:Z_1}$ of Theorem~\ref{thm:relat} are not true in general (see Example~\ref{ex:ex0}). Since by construction $ \Zm^\circ \subseteq \Zm^\ast $ and $ \Sm^\circ \supseteq \Sm^\ast $, all the DDP conditions that we derive below are also valid in the classical sense. Note that the concepts of in- and out-boundary of a set of nodes, as in Definition~\ref{def:boundary}, naturally apply to $\Zm^\circ$ and $\Sm^\circ$, respectively, but not, in general, to $\Zm^\ast$ and $\Sm^\ast$.
\end{remark}
\begin{remark}\label{rem:struct}
For completeness, we mention that in \cite{commault1997geometric} the maximal {\em fixed structured} controlled invariant and the minimal {\em fixed structured} conditioned invariant subspaces are defined, here denoted by $\Zm^g$ and $\Sm_g$, respectively. Those sets differ from our $\Zm^\circ$ and $\Sm^\circ$. ``Fixed'' for structured systems corresponds to elementary basis vectors which remain in a set for almost all $ A \in \mathcal{A}$. Those fixed subspaces are generated by canonical basis vectors and thus are in one-to-one correspondence with sets of nodes. However, although they are generically included in resp. generically include the subspaces $ \Zm^{\ast}$ resp. $ \Sm^\ast $, i.e., generically $\Zm^g\subseteq\Zm^\ast$ resp. $ \Sm^\ast\subseteq\Sm_g $, the sets $\Zm^g$ and $\Sm_g$ are not controlled resp. conditioned invariant, in general, while the sets of nodes $ \Zm^\circ $ resp. $ \Sm^\circ $ are the maximal resp. the minimal sets of nodes with such properties.
\end{remark}

\section{Disturbance decoupling problems over networks: formulation and solution}
\label{sec:standard-DDP}

For a given network system \eqref{eq:lin-syst1}, we can consider two additional sets of nodes $ \Dm, \Tm \subset \mathcal{V}$, of cardinality $ | \Dm|$ and $ | \Tm | $, corresponding, respectively, to nodes affected by disturbances and target nodes. Each disturbance $ w_i $, $ i\in\{1, \ldots, |\Dm| \}$, is assumed to act independently on a node of $ \Dm$.
Both $ \Dm $ and $ \Tm $ can be identified with elementary matrices $ D \in \mathbb{R}^{n\times | \Dm|} $ and $ T\in \mathbb{R}^{| \Tm|\times n} $, i.e., matrices composed of elementary columns, respectively rows, representing subselections of nodes in $ \Vm$. 
We can then associate to $ \Dm $ and $ \Tm $ vector subspaces, respectively $ \im D =\Dm $ and $ \ker T = \Vm\smallsetminus\Tm$.

Denoting $ w = [ w_1 \ldots w_{|\Dm|}]^\top $, the complete system is therefore the following generalization of \eqref{eq:lin-syst1}:
\beq
\begin{split}
\dot x & =  Ax + B u + D w, \\
y & = C x, \\
z & = Tx. 
\end{split}
\label{eq:lin-syst2}
\eeq
For this network system, the disturbance decoupling problem studied in the literature can be formulated as follows.
\begin{problem}[\textbf{DDP}]\label{problem:DDP0}
Given the system~\eqref{eq:lin-syst2}, find a control law $u(t)$ that renders the target nodes in $\mathcal{T}$ unaffected by the disturbances in $\mathcal{D}$.
\end{problem}


Sufficient conditions for the solvability of Problem~\ref{problem:DDP0} for~\eqref{eq:lin-syst2} can be found in a purely topological way, independent of the weight values of the edges in $A$.

The construction of the control law $ u(t) $ requires instead to specify a feedback law, which in turn requires to know the numerical entries of $A$. The feedback can be constructed differently depending on the context. Common choices in the literature are:
\begin{itemize}
    \item \textbf{DDPSF} - Disturbance Decoupling Problem by State Feedback : if $u(t) = -Fx(t)$;
    \item \textbf{DDPOF} - Disturbance Decoupling Problem by Output Feedback: if $u(t) = -Gy(t)$;
    \item \textbf{DDPDF} - Disturbance Decoupling Problem by Dynamical Feedback: if $u(t) = -M\hat{x}(t) - Gy(t)$, where $\hat{x}(t)$ denotes the state estimate.
\end{itemize}

Obviously, the solvability of the DDPOF (or of the DDPDF) implies the solvability of the DDPSF, but not necessarily vice versa.

In Propositions~\ref{prop:Z} and~\ref{prop:S} we assume the following initializations.
\begin{assumption}\label{ass:1}
The algorithm for $\Zm^\circ$, presented in Proposition~\ref{prop:Z}, is initialized with $\Zm_0=\ker T=\Vm\smallsetminus\Tm$. 
\end{assumption}
Under Assumption~\ref{ass:1}, $\Zm^\circ$ is the maximal controlled invariant subset contained in $\Vm\smallsetminus\Tm$, that is, the largest set of nodes contained in $\Vm\smallsetminus\Tm$ which can be rendered invariant by an appropriate choice of state feedback. In other words, $ \Zm^\circ $ is rendered $ (A-BF)$-invariant for a suitable choice of a friend $F\in\mathbb{R}^{m\times n}$.
\begin{assumption}\label{ass:2}
The algorithm for $\Sm^\circ$, presented in Proposition~\ref{prop:S}, is initialized with $\Sm_0=\im D=\Dm$. 
\end{assumption}
Under Assumption~\ref{ass:2}, $\Sm^\circ$ is the minimal conditioned invariant superset containing $\Dm$, i.e., the smallest set of nodes containing $\Dm$ which can be rendered invariant by a suitable ``output injection''. We also say that $ \Sm^\circ $ is $ (A-HC) $-invariant for a proper choice of a friend $H\in\mathbb{R}^{n\times p}$. 

In order to avoid degenerate situations in the problem, we also make assumptions on the sets $\Dm$, $\Tm$, $\Vmin$ and $\Vmout$.
\begin{assumption}\label{ass:3}
$\Dm\cap\Tm = \emptyset $, $\Dm\cap\Vmin=\emptyset$ and $\Tm\cap\Vmout=\emptyset$.
\end{assumption}
In matrix form, the conditions of Assumption~\ref{ass:3} can be expressed as $TD=0$, $B^\top D=0$ and $TC^\top=0$.

\subsection{Solvability of the DDPSF}
If we consider the DDPSF, then we can say that the system~\eqref{eq:lin-syst2} becomes decoupled from the disturbances if $ \exists $ $u = -Fx$, s.t. the target vector
\begin{align*}
z(t) &= Te^{(A-BF)t}x_0+\int_0^{t}Te^{(A-BF)(t-\tau)}Dw(\tau)d\tau
\end{align*}
does not depend on the disturbance vector $w(t)$, $\forall\,t\geq0$. Equivalently, the DDPSF is solvable if $ \exists \, F$ s.t. $T(sI-A+BF)^{-1}D=0$, $\forall\,s$. For subspaces, the geometric solution to the DDPSF is well-known from the literature \cite{basile1969controlled,wonham1970decoupling}, and reads $\Dm\subseteq\Zm^\ast$, where $\Zm^\ast$ is the maximal controlled invariant subspace contained in $\ker T$. A constructive sufficient condition for the solvability of the DDPSF can be achieved by means of the maximal controlled invariant node set contained in $\Vm\smallsetminus\Tm$, i.e., $\Zm^\circ$.  



\begin{theorem}[\textbf{DDPSF}]\label{thm:ddpsf}
Under Assumptions~\ref{ass:1} and \ref{ass:3}, the DDPSF is solvable for the system~\eqref{eq:lin-syst2} if any of the following equivalent conditions is met:
\benu[label=(\alph*)]
\item $\Dm\subseteq\Zm^\circ(\Vmin)$;\label{eq:DDPSF-iff}
\item  Let $\Pm_1,\dots,\Pm_\ell$ be all possible $\Dm$-to-$\Tm$ paths. $\exists\,\Vmin$ s.t. $\Vmin\cap\Pm_j\neq \emptyset$, $\forall\,j\in\{1,\dots,\ell\}$.\label{item:c_thm:ddpsf}
\eenu
\end{theorem}


\subsection{Solvability of the DDPOF}
The DDPSF requires the whole $ x$ (i.e., the state of all nodes) to be measured, which may not be a realistic assumption for complex networks. An alternative is to select a subset $ \Vmout$ of nodes as measurable outputs and consider instead a DDPOF. The DDPOF is solvable if $\exists\,G\in\mathbb{R}^{m\times p}$ s.t. $T(sI-A+BGC)^{-1}D=0$, $\forall\,s$. The standard geometric solution to the DDPOF, given in Theorem 3.2 of \cite{hamano1975localization}, relies on the existence of an $(A,B)$-invariant and $(C,A)$-invariant subspace $\Wm$ s.t. $\Dm\subseteq\Wm\subseteq\Vm\smallsetminus\Tm$. A constructive sufficient condition for the solvability of the DDPOF can also be achieved by means of controlled and conditioned invariant node sets.

\begin{theorem}[\textbf{DDPOF}]\label{thm:ddpof}
Under Assumptions~\ref{ass:1}--\ref{ass:3}, the DDPOF is solvable for the system~\eqref{eq:lin-syst2} if any of the following equivalent conditions is met:
\benu[label=(\alph*)]
\item $\exists\,\Wm$ $(A,B)$-invariant and $(C,A)$-invariant set of nodes s.t. $\Dm\subseteq\Wm\subseteq\Vm\smallsetminus\Tm$;\label{item:a_thm:ddpof}
\item  Let $\Pm_1,\dots,\Pm_\ell$ be all possible $\Dm$-to-$\Tm$ paths. For every path $\Pm_j=\{v_1,\dots,v_{\kappa_j}\}$ of length $\kappa_j-1\geq1$, with $v_1\in\Dm$ and $v_{\kappa_j}\in\Tm$, $\exists$ at least one sub-path of length 1 of the form $\{v_{p},v_{p+1}\}$ with $v_{p}\in\Vmout$ and $v_{p+1}\in\Vmin$, $p\in\{1,\dots,\kappa_j-1\}$, $j\in\{1,\dots,\ell\}$.\label{item:c_thm:ddpof}
\eenu
\end{theorem}

\begin{corollary}\label{cor:ddpof}
$\Sm^\circ(\Vmout)\subseteq\Zm^\circ(\Vmin)$ is a necessary condition for the existence of a set of nodes $\Wm$ as in Theorem~\ref{thm:ddpof}.
\end{corollary}

\subsection{Solvability of the DDPDF}

Solving a DDPDF requires the computation of a compensator whose design is based on a suitable $(C,A,B)$-pair of subspaces $( \Sm, \, \Zm)$ s.t. $\Dm\subseteq\Sm\subseteq\Zm\subseteq\Vm\smallsetminus\Tm$. This is revised in Appendix~\ref{sec:compensator-design}. For subspaces, the geometric solution to the DDPDF reads $\Sm^\ast\subseteq\Zm^\ast$, where $\Sm^\ast$ is the minimal conditioned invariant subspace containing $\Dm$ and $\Zm^\ast$ is the maximal controlled invariant subspace contained in $\ker T$ (Theorem~6.6 and Corollary~6.7 of \cite{trentelman2012control}). Equivalently, $\exists$ an $A_c$-invariant subspace $\Wm_c$ of the extended state-space $\Xm\times \hat{\Xm}$, $\hat{\Xm}\subseteq\Xm=\mathbb{R}^n$, s.t. $\im D_c\subseteq\Wm_c\subseteq\ker T_c$ (Theorem~4.6 of \cite{trentelman2012control} on the closed-loop system \eqref{eq:closed}, see Appendix~\ref{sec:compensator-design} for the notation $ A_c, D_c, T_c$, and $ \hat{\Xm}$). The following theorem provides graphical sufficient conditions for the solvability of the DDPDF by means of the node sets defined in Propositions~\ref{prop:Z} and \ref{prop:S}.

\begin{theorem}[\textbf{DDPDF}]\label{thm:ddpdf}
Under Assumptions~\ref{ass:1}--\ref{ass:3}, the DDPDF is solvable for the systems~\eqref{eq:lin-syst2} and \eqref{eq:closed} if any of the following equivalent conditions is met:
\benu[label=(\alph*)]
\item $\Sm^\circ(\Vmout)\subseteq\Zm^\circ(\Vmin)$;\label{item:c_thm:ddpdf}
\item Let $\Pm_1,\dots,\Pm_\ell$ be all possible $\Dm$-to-$\Tm$ paths. For every path $\Pm_j=\{v_1,\dots,v_{\kappa_j}\}$ of length $\kappa_j-1\geq1$, with $v_1\in\Dm$ and $v_{\kappa_j}\in\Tm$, the indexes
\begin{align*}
  o_j=&\min\{s\in\{1,\dots,\kappa_j-1\}\;\textnormal{s.t.}\;v_s\in\Vmout\cap\Pm_j\},\\
  i_j=&\max\{s\in\{2,\dots,\kappa_j\}\;\textnormal{s.t.}\;v_s\in\Vmin\cap\Pm_j\},
\end{align*}
exist and are s.t. $o_j<i_j$, $\forall\,j\in\{1,\dots,\ell\}$.\label{item:d_thm:ddpdf}
\eenu
\end{theorem}

\begin{remark}
The choice of taking $B$ and $C$ elementary is a design strategy that translates in the freedom of assigning actuators and sensors to the nodes of the network. The condition of having $D$ and $T$ elementary is instead a simplification of the problem treatment. In fact one could relax the assumption that $D$ and $T^\top$ have only elementary columns and define the node sets $\Dm=\{v_i\in\Vm\;\text{s.t.}\;D_{ij}\neq0,\;\forall\,j\}$ and $\Tm=\{v_i\in\Vm\;\text{s.t.}\;T_{ji}\neq0,\;\forall\,j\}$. It follows that $\im D\subseteq\Dm$ and $\im T^\top\subseteq\Tm$, i.e., $\Vm\smallsetminus\Tm\subseteq\ker T$. Therefore, if we consider two sets of nodes $\Zm^\circ,\Sm^\circ\subseteq\Vm$ s.t. conditions $\Dm\subseteq\Zm^\circ$ and $\Sm^\circ\subseteq\Vm\smallsetminus\Tm$ hold true, automatically $\im D\subseteq\Zm^\circ$ and $\Sm^\circ\subseteq\ker T$. This means that the solvability conditions for the DDP of Theorems~\ref{thm:ddpsf}, \ref{thm:ddpof} and \ref{thm:ddpdf} are also sufficient for $D$ and $T$ non-elementary, provided that we adopt the above definitions for the node sets $\Dm$ and $\Tm$.
\end{remark}

\section{Minimal node cardinality DDP} 
\label{sec:min-node-DDP}

For the system~\eqref{eq:lin-syst2},  we consider now the following setting:
\bite
\item $ \Gm $ (and hence $A$) is given;
\item The sets of disturbances $ \Dm $ and targets $ \Tm $ are also given;
\item The sets of inputs $ \Vmin $ and outputs $ \Vmout $ are not preassigned.
\eite
The task is to choose $ \Vmin $ (and possibly $ \Vmout $) in the most parsimonious way while solving the DDP. 
More formally, the problem that we want to study is the following.

\begin{problem}[\textbf{Minimal node cardinality DDP}]\label{problem:DDP}
Given a network $\Gm$ and sets $ \Dm$, $ \Tm$, find a minimal cardinality set of input nodes $\Vmin$ (and, if needed, also a minimal cardinality set of output nodes $ \Vmout$) for which there exists a control law $u(t)$ that solves the DDP.
\end{problem}

In practice, solving Problem~\ref{problem:DDP} corresponds to finding a matrix $ B \in \mathbb{R}^{n\times m}$ of elementary columns s.t. $ \im B =\Vmin $ and, if needed, an elementary row matrix $ C \in \mathbb{R}^{p\times n} $ s.t. $ (\ker C)^\perp = \Vmout$. 
From the classification of Section~\ref{sec:standard-DDP}, the following subproblems can be specified.

\begin{problem2A}[\textbf{Minimal Input Cardinality DDPSF}]
\label{problem:DDPSF}
Find the minimal cardinality $ \Vmin$ that solves the DDPSF.
\end{problem2A}
\begin{problem2B}[\textbf{Minimal Input and Output Cardinality DDPOF}]
\label{problem:DDPOF}
Find the minimal cardinality $\Vmin$ and $ \Vmout$ that solves the DDPOF. 
\end{problem2B}
\begin{problem2C}[\textbf{Minimal Input and Output Cardinality DDPDF}]
\label{problem:DDPDF}
Find the minimal cardinality $\Vmin$ and $ \Vmout$ that solves the DDPDF.   
\end{problem2C}

\subsection{Solving the minimal input cardinality DDPSF}

To solve our minimal input cardinality DDPSF, we exploit condition~$\ref{eq:DDPSF-iff}$ of Theorem~\ref{thm:ddpsf}. For that we need to compute $ \Zm^\circ$ using the algorithm in Proposition~\ref{prop:Z}. Once $\Dm$ and $\Tm$ are assigned, and under Assumption~\ref{ass:1}, $ \Zm^\circ $ is only a function of $\Vmin$, i.e., $ \Zm^\circ =  \Zm^\circ (\Vmin)$.
Problem~2A can then be reformulated as an optimization problem: 
\beq
\text{Problem 2A:} \qquad \Biggl\{
    \begin{aligned}
      & \min_{\Vmin} \quad |\Vmin|\\
      & \text{subject to:} \quad \Dm\subseteq\Zm^\circ (\Vmin)
    .\end{aligned}\label{eq:opt_problem1}
\eeq



\subsubsection{A node min-cut/max-flow algorithm}\label{sec:mincut}
What item~$\ref{item:c_thm:ddpsf}$ of Theorem~\ref{thm:ddpsf} suggests is that the nodes of $ \Vmin$ must form a cut set of nodes separating $ \Dm $ from $ \Tm$. What Problem~2A requires is that such cut set must have minimal cardinality.
The problem can be seen as a special case of a min-cut/max-flow problem, whose optimal solution can be computed in polynomial time \cite{goldberg1988new}. 
The strategy to achieve this decoupling with the minimal number of control inputs is summarized in Algorithms~\ref{alg:one}--\ref{alg:three}.

\begin{algorithm}[hbt!]
\caption{DDPSF by min-cut/max-flow}\label{alg:one}
\begin{algorithmic}
\STATE \textbf{Data:} {$\mathcal{G}$, $\mathcal{D}$, $\mathcal{T}$}
\STATE \textbf{Result:} {$\Vmin$}
\STATE \underline{$[\overline{\mathcal{G}};v^\Dm; v^\Tm]\gets$  \textbf{Algorithm~\ref{alg:two}}};\hspace{8pt} \textit{// Given a network $\mathcal{G}$ with $n$ nodes and $q$ edges and the nodes subsets $\mathcal{D}$ and $\mathcal{T}$, construct an extended network $\overline{\mathcal{G}}$}
\vspace{0.1cm}
\STATE \underline{$[\overline{\Jm}^\Dm;\overline{\Jm}^\Tm]\gets$ \texttt{maxflow}($\overline{\mathcal{G}},v^\Dm,v^\Tm$)};\hspace{8pt} \textit{// Using the max-flow/min-cut algorithm from $v^\Dm$ to $v^\Tm$, partition the nodes of $\overline{\mathcal{G}}$ into the nodes that are and are not reachable from $v^\Dm$, resp. $\overline{\Jm}^\Dm$ and $\overline{\Jm}^\Tm$}
\STATE \underline{$\Vmin\gets$ \textbf{Algorithm~\ref{alg:three}}}.\hspace{8pt} \textit{// Given $\overline{\Jm}^\Dm$ and $\overline{\Jm}^\Tm$, find the optimal control input nodes set $\Vmin$ that solves the DDPSF}
\end{algorithmic}
\end{algorithm}

\begin{algorithm}[hbt!]
\caption{Construct the extended network $\overline{\mathcal{G}}$ }\label{alg:two}
\begin{algorithmic}
\STATE \textbf{Data:} {$\mathcal{G}$, $\mathcal{D}$, $\mathcal{T}$}
\STATE \textbf{Result:} {$\overline{\mathcal{G}}= ( \overline{\Vm}, \overline{\Em}, \overline{A}) $, $v^\Dm$, $v^\Tm$}
\STATE Construct $ \overline{\Vm}$: $\overline{\Vm}=\Vm\cup\{v_{n+1}, \, \ldots, v_{2n}\}\cup\{v_{2n+1},v_{2n+2}\}$;
\vspace{-0mm}
\STATE Construct $ \overline{\Em}$: 
\bite
\item If $\exists\,(v_i,\,v_j)\in\Em$ then $\exists\,(v_{i+n},\,v_{j}) \in\overline{\Em}$;
\vspace{-0mm}
\item $(v_i,\,v_{i+n})\in\overline{\Em},\,\forall\,i\in\{1,\dots,n\}$;
\vspace{-0mm}
\item $(v_{2n+1},\,v_{d})\in\overline{\Em},\,\forall\,v_d\in\Dm$;
\vspace{-0mm}
\item $(v_t,\,v_{2n+2})\in\overline{\Em},\,\forall\,v_t\in\Tm$;
\vspace{-0mm}
\eite
\STATE Construct $ \overline{A}$:
\vspace{-0mm}
\bite
\item If $\exists\,(v_i,\,v_j)\in\Em$ then $ \overline{A}_{j,i+n} =+\infty$;
\vspace{-0mm}
\item $ \forall \; v_i \notin \Dm $, $ \overline{A}_{i+n, i} =1 $;
\vspace{-0mm}
\item $ \forall \; v_i \in \Dm $, $ \overline{A}_{i+n, i} =+\infty $ and $\overline{A}_{i,2n+1}=+\infty$;
\vspace{-0mm}
\item $ \forall v_i \in \Tm $, $\overline{A}_{2n+2,i}=+\infty$;
\vspace{-0mm}

\eite
\STATE $ v^\Dm = v_{2n+1}$; $ v^\Tm = v_{2n+2}$.
\vspace{0mm}
\end{algorithmic}
\end{algorithm}

\begin{algorithm}[hbt!]
\caption{Minimal control input set $\Vmin$}\label{alg:three}
\begin{algorithmic}
\STATE \textbf{Data:} {$\overline{\Gm}$, $\overline{\Jm}^\Dm$, $\overline{\Jm}^\Tm$}
\STATE \textbf{Result:} {$\Vmin$}
\STATE $\Vmin=[\,]$;\hspace{8pt} \textit{// Store into $\Vmin$ the tail nodes of the edges between $\overline{\Jm}^\Dm$ and $\overline{\Jm}^\Tm$}
\FOR{$i=1:|\overline{\Jm}^\Dm|$}\FOR{$j=1:|\overline{\Jm}^\Tm|$}
\IF{$(\overline{\Jm}^\Dm(i),\overline{\Jm}^\Tm(j)) \in \overline{\Em}$}
\STATE{$\Vmin=\Vmin \cup \overline{\Jm}^\Dm(i)$;\;}
\ENDIF\ENDFOR\ENDFOR
.
\end{algorithmic}
\end{algorithm}

The algorithms always converge in finite time to an optimal solution, since the number of nodes and edges is finite and since, in the worst case scenario, the entire set $\Vm\smallsetminus\Dm$ can be exploited as control input nodes.



Here are more details on some parts of the algorithms.

\noindent {\bf Algorithm~\ref{alg:two}: Construct the extended network $\bm{\overline{\mathcal{G}}}$.}
The minimal node-cut problem can be transformed into a minimal edge-cut problem on an extended network $\overline{\Gm}=(\overline{\Vm},\overline{\Em},\overline{A})$, where it can be solved via a standard min-cut/max-flow algorithm. 
To do so, for each node $v_i$ a new node $v_{i+n}$ and a new edge $(v_i,\,v_{i+n})$ are generated. In all preexisting edges the weight is put at infinity (practically, at a very high number), whereas the additional edges $(v_i,\,v_{i+n})$ are assigned a unitary weight, except when $ v_i \in \Dm$, where $ (v_i, v_{i+n} )$ gets an infinite weight. 
In such a way, an edge-cut involving $(v_i,\,v_{i+n})$ corresponds to selecting the node $v_i$. Two additional nodes must be added, $v^\Dm=v_{2n+1}$ and $v^\Tm=v_{2n+2}$, so that $|\overline{\Vm}|=2n+2$. Node $v^\Dm$ acts as a source node and is connected via $|\Dm|$ edges to all disturbance nodes in $\Dm$. All target nodes in $\Tm$ are connected via $|\Tm|$ edges to the sink node $v^\Tm$. The total number of edges in $\overline{\Em}$ is $q+n+|\Dm|+|\Tm|$. On $ \overline{\Gm} $ all paths from $ v^\Dm $ to $ v^\Tm $ have odd length, i.e., must contain an even number of nodes.




\noindent {\bf Min-cut/Max-flow.}
A standard min-cut/max-flow algorithm \cite{ford1956maximal} is performed on $\overline{\Gm}=(\overline{\Vm},\overline{\Em}, \overline{A})$, from the source node $ v^\Dm $ to the sink node $ v^\Tm $. The cut partitions the network nodes in two disjoint sets, one, $ \overline{\Jm}^\Dm$, containing the nodes that are reachable from the disturbances, the other, $ \overline{\Jm}^\Tm $, containing the targets and all the nodes that cannot be reached by the disturbances.

\noindent {\bf Algorithm~\ref{alg:three}: find the minimal control input set $\Vmin$.}  
The meaning of Algorithm~\ref{alg:three} is explained in the following proposition.
\begin{proposition}
\label{prop-cut-extended}
Given $ \Gm $ and its extended network $ \overline{\Gm}$, an edge cut set in $ \overline{\Gm}$ corresponds to a node cut set in $ \Gm$. The min-cut value on $ \overline{\Gm} $ is equal to $ |\Vmin|$, i.e., to the cardinality of the set of input nodes associated with the solution.
\end{proposition}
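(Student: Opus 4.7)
My plan is to establish the result in two parts: first a structural correspondence between cuts in $\overline{\Gm}$ and node cuts in $\Gm$, then a min-cardinality match via the feasibility condition of Theorem~\ref{thm:ddpsf}.

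\textbf{Step 1: correspondence between paths.} I first verify that $v^\Dm$-to-$v^\Tm$ paths in $\overline{\Gm}$ are in bijection with $\Dm$-to-$\Tm$ paths in $\Gm$. Indeed, by the construction in Algorithm~\ref{alg:two}, any path from $v^\Dm$ must start with an edge $(v^\Dm, v_d)$ for some $v_d\in\Dm$, alternate between edges $(v_i,v_{i+n})$ and edges $(v_{i+n},v_j)$ that mirror edges $(v_i,v_j)\in\Em$, and terminate with an edge $(v_t,v^\Tm)$ for some $v_t\in\Tm$. Dropping the duplicated nodes $v_{i+n}$ and the endpoints $v^\Dm,v^\Tm$ yields a $\Dm$-to-$\Tm$ path in $\Gm$, and conversely every such path in $\Gm$ lifts uniquely to a path in $\overline{\Gm}$.

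\textbf{Step 2: any finite cut uses only the splitting edges of non-disturbance nodes.} By construction, every edge in $\overline{\Em}$ carries weight $+\infty$ except the edges $(v_i,v_{i+n})$ with $v_i\notin\Dm$, which have unit weight. Since the min-cut must be finite (an upper bound $|\Vm\smallsetminus\Dm|$ is achieved by cutting all unit-weight edges, which removes every $\Dm$-to-$\Tm$ path), any optimal edge cut $\Em^\ast\subseteq\overline{\Em}$ consists only of edges of the form $(v_i,v_{i+n})$ with $v_i\notin\Dm$. Define the map $\Phi(\Em^\ast)=\{v_i\in\Vm\smallsetminus\Dm\;\text{s.t.}\;(v_i,v_{i+n})\in\Em^\ast\}$. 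Then $|\Em^\ast|=|\Phi(\Em^\ast)|$, and this is precisely the set $\Vmin$ produced by Algorithm~\ref{alg:three} (the tail nodes of the edges separating $\overline{\Jm}^\Dm$ from $\overline{\Jm}^\Tm$).

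\textbf{Step 3: cut-equivalence.} I claim $\Em^\ast$ disconnects $v^\Dm$ from $v^\Tm$ in $\overline{\Gm}$ if and only if $\Phi(\Em^\ast)$ intersects every $\Dm$-to-$\Tm$ path of $\Gm$. Indeed, by Step 1, every $\Dm$-to-$\Tm$ path $\{v_{i_1},\ldots,v_{i_k}\}$ in $\Gm$ lifts to a $v^\Dm$-to-$v^\Tm$ path in $\overline{\Gm}$ whose only unit-weight edges are $(v_{i_s},v_{i_s+n})$ for $s=1,\ldots,k$. Since $v_{i_1}\in\Dm$, the edge $(v_{i_1},v_{i_1+n})$ has infinite weight and is excluded from $\Em^\ast$ (so $v_{i_1}\notin\Phi(\Em^\ast)$, consistent with Assumption~\ref{ass:3}); removing the remaining unit-weight edges in $\Em^\ast$ blocks the lifted path exactly when $\Phi(\Em^\ast)\cap\{v_{i_2},\ldots,v_{i_k}\}\neq\emptyset$, i.e., when $\Phi(\Em^\ast)$ is a node cut set separating $\Dm$ from $\Tm$ in $\Gm$.

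\textbf{Step 4: optimality.} By Step 3, minimizing $|\Em^\ast|$ over finite edge cuts of $\overline{\Gm}$ is equivalent to minimizing $|\Phi(\Em^\ast)|=|\Vmin|$ over node cuts $\Vmin\subseteq\Vm\smallsetminus\Dm$ that intersect every $\Dm$-to-$\Tm$ path. By item~\ref{item:c_thm:ddpsf} of Theorem~\ref{thm:ddpsf}, the latter minimization is exactly Problem~2A in~\eqref{eq:opt_problem1}. Hence the max-flow/min-cut value on $\overline{\Gm}$ equals $|\Vmin|$ for the optimal $\Vmin$ returned by Algorithm~\ref{alg:three}.

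I do not anticipate a serious obstacle, since the node-to-edge cut reduction through vertex splitting is a classical technique; the only care is to exclude $\Dm$ from $\Vmin$ via the infinite weights on $(v_d,v_{d+n})$ for $v_d\in\Dm$, and to track that paths in $\overline{\Gm}$ have the alternating structure described in Step 1.
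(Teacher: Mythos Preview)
Your proof is correct and follows essentially the same vertex-splitting argument as the paper: infinite-weight edges are never in a finite min-cut, so the cut consists only of unit-weight splitting edges $(v_i,v_{i+n})$, and these correspond bijectively to a node set in $\Gm$ of the same cardinality. The paper's proof is considerably terser and omits your explicit path bijection (Step~1) and the appeal to Theorem~\ref{thm:ddpsf}\ref{item:c_thm:ddpsf} (Step~4); the latter is actually deferred in the paper to the separate optimality result in Theorem~\ref{thm:main}, so your Step~4 slightly overshoots what the proposition itself claims, but harmlessly so.
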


In general the optimal solution $\Vmin$ provided by Algorithms~\ref{alg:one}--\ref{alg:three} is not unique. In order to obtain all optimal solutions, specialized techniques and extensions of the min-cut/max-flow algorithm (e.g., residual graph analysis, Gomory-Hu trees \cite{gomory1961multi}) can be used to enumerate all equal-cardinality cuts \cite{abboud2023all}. To discriminate among such optima, a secondary objective function must be introduced.

\subsubsection{Optimality}

The optimality of the solution is summarized in the following statement.
\begin{theorem}\label{thm:main_paper2}
Under Assumptions~\ref{ass:1} and \ref{ass:3}, the solutions $\Vmin$ computed by means of Algorithms~\ref{alg:one}--\ref{alg:three} are the optimal solutions of Problem~2A.
\end{theorem}

For the Problem~2A, we say that $ \Vmin$ is feasible if it satisfies $ \Dm \subseteq \Zm^\circ(\Vmin)$ but it is not necessarily minimal. 
The following characterization explains where the input nodes should be placed if we want to achieve optimality. 
Denote by $ \mathcal{P} = \bigcup_i \mathcal{P}_i $ the union of all $ \Dm$-to-$ \Tm$ paths. 
\begin{theorem}\label{thm:SF_otp_charact}
Under Assumptions~\ref{ass:1} and \ref{ass:3},
\benu[label=(\alph*)]
\item If $ \Vmin$ is a feasible solution of Problem 2A then \\$\Vmin \supseteq \partial_+(\Zm^\circ(\Vmin),A)$;\label{item:a_SF_otp_charact}
\item If $ \Vmin$ is an optimal solution of Problem 2A then \\$\Vmin=\partial_+(\Zm^\circ(\Vmin),A) \cap \mathcal{P} =\partial_+(\Zm^\circ(\Vmin),A) $;\label{item:b_SF_otp_charact}
\item $ \Vmin$ is an optimal solution of Problem 2A iff it is the minimal cardinality set of nodes for which $\Vmin=\partial_+(\Zm^\circ(\Vmin),A) \cap \mathcal{P} =\partial_+(\Zm^\circ(\Vmin),A) $.\label{item:c_SF_otp_charact}
\eenu
\end{theorem}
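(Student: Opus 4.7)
The plan is to prove the three items in order, leveraging two key tools already established in the excerpt: the path-cut feasibility criterion of Theorem~\ref{thm:ddpsf}(\ref{item:c_thm:ddpsf}) (\emph{$\Vmin$ is feasible iff it intersects every $\Dm$-to-$\Tm$ path}) and the topological form of controlled invariance in Proposition~\ref{prop:AB-invar-equiv-char}(\ref{item:a_prop:AB-invar-equiv-char}).

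Item (\ref{item:a_SF_otp_charact}) is essentially immediate. Feasibility of $\Vmin$ implies $\Dm\subseteq\Zm^\ast(\Vmin)$, so $\Zm^\ast(\Vmin)$ is $(A,B)$-invariant, and Proposition~\ref{prop:AB-invar-equiv-char}(\ref{item:a_prop:AB-invar-equiv-char}) forces every edge leaving $\Zm^\ast(\Vmin)$ to land in $\Vmin$; hence $\partial_+(\Zm^\ast(\Vmin),A)\subseteq\Vmin$.

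For item (\ref{item:b_SF_otp_charact}), in view of (\ref{item:a_SF_otp_charact}) it suffices to prove the reverse inclusions $\Vmin\subseteq\mathcal{P}$ and $\Vmin\subseteq\partial_+(\Zm^\ast(\Vmin),A)$; combining them with (\ref{item:a_SF_otp_charact}) then yields $\partial_+(\Zm^\ast(\Vmin),A)\cap\mathcal{P}=\partial_+(\Zm^\ast(\Vmin),A)=\Vmin$. Both inclusions are established by contradiction, each time exhibiting a strictly smaller feasible input set. If some $v\in\Vmin$ lies on no $\Dm$-to-$\Tm$ path, then $\Vmin\smallsetminus\{v\}$ still intersects every such path, is therefore feasible by Theorem~\ref{thm:ddpsf}(\ref{item:c_thm:ddpsf}), and contradicts optimality. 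For the second inclusion, suppose $v\in\Vmin$ with $v\notin\partial_+(\Zm^\ast(\Vmin),A)$; the goal is to produce, on every $\Dm$-to-$\Tm$ path through $v$, a second input node, so that again $\Vmin\smallsetminus\{v\}$ would remain a $\Dm$-to-$\Tm$ cut. I split two sub-cases. If $v\in\Zm^\ast(\Vmin)$, I iterate Proposition~\ref{prop:AB-invar-equiv-char}(\ref{item:a_prop:AB-invar-equiv-char}) \emph{forward} along the path from $v$: each successor must belong to $\Zm^\ast(\Vmin)\cup\Vmin$, and since $\Tm\cap\Zm^\ast(\Vmin)=\emptyset$ by Assumption~\ref{ass:1}, the path must exit $\Zm^\ast(\Vmin)$ at some later node, which controlled invariance forces into $\Vmin$, giving an input strictly after $v$. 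If instead $v\notin\Zm^\ast(\Vmin)$, the absence of incoming edges from $\Zm^\ast(\Vmin)$ means the predecessor of $v$ on the path lies outside $\Zm^\ast(\Vmin)$; tracing \emph{backward} from $v$, since the path starts in $\Dm\subseteq\Zm^\ast(\Vmin)$, there is a first index where the path leaves $\Zm^\ast(\Vmin)$, and that transition node is again forced into $\Vmin$ by controlled invariance, giving an input strictly before $v$. In either sub-case the desired second input exists, completing the contradiction.

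Item (\ref{item:c_SF_otp_charact}) follows from (\ref{item:b_SF_otp_charact}) together with the definition of optimality: the forward direction is immediate since any optimal $\Vmin$ satisfies the boundary equalities by (\ref{item:b_SF_otp_charact}) and has minimal cardinality by definition; for the converse, (\ref{item:b_SF_otp_charact}) guarantees that every optimal $\Vmin$ lies in the class of feasible inputs satisfying the boundary equalities, so the minimum cardinality over this class coincides with the optimal value and any minimizer of the class is itself optimal. The main obstacle is the second inclusion in item (\ref{item:b_SF_otp_charact}): correctly combining the ``forward'' and ``backward'' applications of controlled invariance so as to cover both possible positions of $v$ relative to $\Zm^\ast(\Vmin)$, while using Assumption~\ref{ass:1} and the feasibility relation $\Dm\subseteq\Zm^\ast(\Vmin)$ as the anchors that guarantee the path traversal terminates in $\Vmin$ on a node different from $v$.
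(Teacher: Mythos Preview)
Your proof is correct, and for item~\ref{item:b_SF_otp_charact} it takes a genuinely different route from the paper. The paper's argument for~\ref{item:b_SF_otp_charact} works through the extended network $\overline{\Gm}$ of Algorithm~\ref{alg:two}: it identifies an optimal $\Vmin$ with a min-cut on $\overline{\Gm}$, analyzes the induced partition $(\overline{\Jm}^\Dm,\overline{\Jm}^\Tm)$, and then invokes Lemma~\ref{lem:1} on each $v^\Dm$-to-$v^\Tm$ path to place every input node on the out-boundary. Your argument avoids the extended network entirely and uses only the path-cut criterion of Theorem~\ref{thm:ddpsf}\ref{item:c_thm:ddpsf} together with the elementary edge rule of Proposition~\ref{prop:AB-invar-equiv-char}\ref{item:a_prop:AB-invar-equiv-char}: you remove a putatively redundant input node and show the remainder is still a $\Dm$-to-$\Tm$ cut, splitting on whether the node lies inside or outside $\Zm^\ast(\Vmin)$ and tracing the path forward or backward accordingly. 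This is more elementary and self-contained, since it does not depend on the correctness of Algorithms~\ref{alg:one}--\ref{alg:three} or Proposition~\ref{prop-cut-extended}; the paper's approach, on the other hand, ties the characterization directly to the algorithmic solution and makes the link with min-cut optimality explicit. For items~\ref{item:a_SF_otp_charact} and~\ref{item:c_SF_otp_charact} your treatment coincides with the paper's (the paper dispatches~\ref{item:c_SF_otp_charact} in one sentence as ``straightforward given the construction of cut set of nodes'', which is essentially your closing paragraph).
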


In words, an optimal solution cannot have inputs inside the maximal controlled invariant set $ \Zm^\circ$, meaning that an optimal solution must be the out-boundary of $ \Zm^\circ$. As a consequence, an optimal solution always satisfies $\Vmin\cap\Zm^\circ(\Vmin)=\emptyset$.
In Theorem~\ref{thm:SF_otp_charact}, the condition in~$\ref{item:b_SF_otp_charact}$ is necessary but not sufficient for optimality, since there may exist other cut-set solutions satisfying $\Vmin=\partial_+(\Zm^\circ(\Vmin),A)=\partial_+(\Zm^\circ(\Vmin),A) \cap \mathcal{P}$ but of higher cardinality.
Note that $\Vmin=\partial_+(\Zm^\circ(\Vmin),A)$ alone does not imply that $\Vmin$ are placed on the $\Dm$-to-$\Tm$ paths only. 
The non-minimality of a feasible solution is also expressed in the following proposition.
\begin{proposition}
\label{thm:sub}
Under Assumptions~\ref{ass:1} and \ref{ass:3}, and given a feasible solution $\Vmin=\im B$ of Problem~2A of cardinality $m$, define the submatrix $\Tilde{B}=B\,\diag(\ell_1,\dots,\ell_m)$, where $\ell_i\in\{0,1\},\,\forall\,i$, with $\ell_i\neq0\;\forall\,v_i\in\partial_+(\Zm^\circ(\Vmin),A)$. Then $ \Zm^\circ (\Vmin)$ is $(A,\Tilde{B})$-invariant.
\end{proposition}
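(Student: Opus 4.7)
The plan is to apply the topological characterization of controlled invariance (item~\ref{item:a_prop:AB-invar-equiv-char} of Proposition~\ref{prop:AB-invar-equiv-char}) together with the boundary inclusion established in Theorem~\ref{thm:SF_otp_charact}\ref{item:a_SF_otp_charact}. Denote $\Tilde{\Vmin}=\im\Tilde{B}$, which by construction is the subset of $\Vmin$ consisting of those nodes $v_i\in\Vmin$ with $\ell_i=1$. The hypothesis on the $\ell_i$'s says precisely that $\partial_+(\Zm^\ast(\Vmin),A)\subseteq\Tilde{\Vmin}\subseteq\Vmin$. Since $\Vmin$ is feasible, Theorem~\ref{thm:SF_otp_charact}\ref{item:a_SF_otp_charact} guarantees $\partial_+(\Zm^\ast(\Vmin),A)\subseteq\Vmin$, so the construction of $\Tilde{B}$ is consistent.

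Next, I would verify condition~\ref{item:a_prop:AB-invar-equiv-char} of Proposition~\ref{prop:AB-invar-equiv-char} with $\Zm=\Zm^\ast(\Vmin)$ and input set $\Tilde{\Vmin}$. Take any edge $(v_i,v_j)\in\Em$ with $v_i\in\Zm^\ast(\Vmin)$. Feasibility of $\Vmin$ means $\Zm^\ast(\Vmin)$ is $(A,B)$-invariant, so by item~\ref{item:a_prop:AB-invar-equiv-char} of Proposition~\ref{prop:AB-invar-equiv-char} applied to the original $\Vmin$, we have $v_j\in\Zm^\ast(\Vmin)\cup\Vmin$. Split into two cases:
\begin{itemize}
\item If $v_j\in\Zm^\ast(\Vmin)$, the inclusion $v_j\in\Zm^\ast(\Vmin)\cup\Tilde{\Vmin}$ is trivially satisfied.
\item If $v_j\notin\Zm^\ast(\Vmin)$, then the edge $(v_i,v_j)$ leaves $\Zm^\ast(\Vmin)$, so by Definition~\ref{def:boundary}, $v_j\in\partial_+(\Zm^\ast(\Vmin),A)$. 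By the hypothesis on the $\ell_i$'s, $v_j\in\Tilde{\Vmin}$.
\end{itemize}
In either case, $v_j\in\Zm^\ast(\Vmin)\cup\Tilde{\Vmin}$, which by the equivalence in Proposition~\ref{prop:AB-invar-equiv-char} establishes $(A,\Tilde{B})$-invariance of $\Zm^\ast(\Vmin)$.

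There is no real obstacle here: the claim essentially says that the only inputs that \emph{matter} for rendering $\Zm^\ast(\Vmin)$ controlled invariant are those on its out-boundary, and the proof is just unpacking the graphical definition of $(A,B)$-invariance. The main thing to be careful about is to cite the correct direction of Theorem~\ref{thm:SF_otp_charact}\ref{item:a_SF_otp_charact} so that $\partial_+(\Zm^\ast(\Vmin),A)$ is genuinely a subset of $\Vmin$ (and hence of $\Tilde{\Vmin}$ by the hypothesis), which makes the construction of $\Tilde{B}$ well-defined as a submatrix of $B$.
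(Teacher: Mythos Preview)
Your proof is correct and follows essentially the same approach as the paper's: both invoke Theorem~\ref{thm:SF_otp_charact}\ref{item:a_SF_otp_charact} to ensure $\partial_+(\Zm^\ast(\Vmin),A)\subseteq\Vmin$, and then observe that every edge leaving $\Zm^\ast(\Vmin)$ must land on the out-boundary, which is contained in $\im\Tilde{B}$. The only cosmetic difference is that the paper phrases the core step in subspace language ($Ax\in\Zm^\ast+\partial_+(\Zm^\ast(\Vmin),A)$ for all $x\in\Zm^\ast$), whereas you unpack it edge-by-edge via Proposition~\ref{prop:AB-invar-equiv-char}\ref{item:a_prop:AB-invar-equiv-char}.
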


\subsection{Solving the minimal input/output cardinality DDPOF and DDPDF}
The equivalent of Problem~2A for the DDPOF is the following.
\beq
\text{Problem 2B:} \qquad \Biggl\{
    \begin{aligned}
      & \min_{\Vmin,\Vmout} \quad |\Vmin|+|\Vmout|\\
      & \text{subject to:} \;\; \Dm\subseteq\Wm\subseteq\Vm\smallsetminus\Tm,
    \end{aligned}\label{eq:opt_problem2B}
\eeq
where $\Wm$ is the $(A,B)$- and $(C,A)$-invariant set of nodes in item~$\ref{item:a_thm:ddpof}$ of Theorem~\ref{thm:ddpof}.

The minimal input/output cardinality DDPDF can instead be formulated as follows:
\beq
\text{Problem 2C:} \qquad \Biggl\{
    \begin{aligned}
      & \min_{\Vmin,\Vmout} \quad |\Vmin|+|\Vmout|\\
      & \text{subject to:} \quad \Sm^\circ(\Vmout)\subseteq\Zm^\circ (\Vmin)
    .\end{aligned}\label{eq:opt_problem2C}
\eeq

Similarly to Theorem~\ref{thm:SF_otp_charact}, we have the following characterizations of the feasibility and optimality of the DDPOF and the DDPDF.

\begin{theorem}\label{thm:OF_otp_charact}
Under Assumptions~\ref{ass:1}--\ref{ass:3},
\benu[label=(\alph*)]
\item If $ \Vmin$, $\Vmout$ are a feasible solution of Problem 2B then \\$\Vmin \supseteq \partial_+(\Wm,A)$ and $\Vmout \supseteq \partial_-(\Wm,A)$;\label{item:a_OF_otp_charact}
\item If $ \Vmin$, $\Vmout$ are an optimal solution of Problem 2B then \\$\Vmin = \partial_+(\Wm,A)\cap \mathcal{P}=\partial_+(\Wm,A)$ and $\Vmout = \partial_-(\Wm,A)\cap \mathcal{P}=\partial_-(\Wm,A)$;\label{item:b_OF_otp_charact}
\item $ \Vmin$, $\Vmout$ are an optimal solution of Problem 2B iff they are the minimal cardinality sets of nodes for which $\Vmin = \partial_+(\Wm,A)\cap \mathcal{P}=\partial_+(\Wm,A)$ and $\Vmout = \partial_-(\Wm,A)\cap \mathcal{P}=\partial_-(\Wm,A)$.\label{item:c_OF_otp_charact}
\eenu
\end{theorem}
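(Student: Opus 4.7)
My plan is to mirror the structure of Theorem~\ref{thm:SF_otp_charact}, adapted to account for the two boundary sets $\Vmin$ and $\Vmout$ simultaneously. For part~\ref{item:a_OF_otp_charact}, the claim is essentially a direct reading of Lemma~\ref{lemma:cond-contr-inv}: feasibility of Problem~2B, via item~\ref{item:a_thm:ddpof} of Theorem~\ref{thm:ddpof}, guarantees the existence of a set $\Wm$ that is simultaneously $(A,B)$- and $(C,A)$-invariant with $\Dm\subseteq\Wm\subseteq\Vm\smallsetminus\Tm$. Applying Lemma~\ref{lemma:cond-contr-inv} to $\Wm$ yields $\partial_-(\Wm,A)\subseteq\Vmout$ and $\partial_+(\Wm,A)\subseteq\Vmin$ at once.

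For part~\ref{item:b_OF_otp_charact}, I would argue by contradiction against optimality. Given an optimal $(\Vmin,\Vmout)$ with witness $\Wm$, suppose first that $\Vmin \supsetneq \partial_+(\Wm,A)$; pick $v \in \Vmin\smallsetminus\partial_+(\Wm,A)$ and set $\Vmin' = \Vmin\smallsetminus\{v\}$. The inclusion $\partial_+(\Wm,A) \subseteq \Vmin'$ is preserved, so by item~\ref{item:a_prop:AB-invar-equiv-char} of Proposition~\ref{prop:AB-invar-equiv-char}, $\Wm$ remains $(A,B')$-invariant for the reduced input matrix $B'$, while $(C,A)$-invariance with respect to the unchanged $\Vmout$ is untouched. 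Hence $(\Vmin',\Vmout)$ is a strictly smaller feasible solution, contradicting optimality. The symmetric argument, applied via item~\ref{item:a_prop:CA-invar-equiv-char} of Proposition~\ref{prop:CA-invar-equiv-char}, gives $\Vmout = \partial_-(\Wm,A)$.

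For the harder equality $\partial_+(\Wm,A) = \partial_+(\Wm,A)\cap\mathcal{P}$, I would take $v \in \partial_+(\Wm,A) \smallsetminus\mathcal{P}$ and construct a strictly smaller candidate set $\Wm'\subseteq\Wm$ that is still both $(A,B)$- and $(C,A)$-invariant, lies between $\Dm$ and $\Vm\smallsetminus\Tm$, and whose out-boundary excludes $v$. The natural construction is to restrict $\Wm$ to the intersection of the nodes reachable from $\Dm$ (through $\Wm\smallsetminus\Vmout$, in the spirit of Lemma~\ref{lem:2}) and the nodes co-reachable to $\Tm$ (through $\Wm\smallsetminus\Vmin$, in the spirit of Lemma~\ref{lem:1}). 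One then checks that both reachability relations propagate correctly across the invariance defining inclusions, so $\Wm'$ inherits both invariances; any boundary node $v\notin\mathcal{P}$ is unreachable from $\Dm$ via such a path, or cannot reach $\Tm$, and is therefore shed. Since $\Vmin=\partial_+(\Wm,A)$ and $\partial_+(\Wm',A)\subsetneq \partial_+(\Wm,A)$, the pair $(\partial_+(\Wm',A),\partial_-(\Wm',A))$ is feasible with strictly smaller total cardinality, contradicting optimality. The dual argument handles $\partial_-(\Wm,A)\subseteq\mathcal{P}$.

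Part~\ref{item:c_OF_otp_charact} follows by combining~\ref{item:a_OF_otp_charact} and \ref{item:b_OF_otp_charact} with a direct converse: if $\Vmin$ and $\Vmout$ are minimal cardinality sets satisfying the two boundary equations for some $\Wm$ with $\Dm\subseteq\Wm\subseteq\Vm\smallsetminus\Tm$, then items~\ref{item:a_prop:AB-invar-equiv-char} of Proposition~\ref{prop:AB-invar-equiv-char} and~\ref{item:a_prop:CA-invar-equiv-char} of Proposition~\ref{prop:CA-invar-equiv-char} certify that $\Wm$ is simultaneously $(A,B)$- and $(C,A)$-invariant, so Theorem~\ref{thm:ddpof}\ref{item:a_thm:ddpof} yields feasibility; minimality of $|\Vmin|+|\Vmout|$ then gives optimality. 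I expect the main obstacle to be the construction of $\Wm'$ in part~\ref{item:b_OF_otp_charact}: unlike the DDPSF case, where a single min-cut certifies minimality by design, here a refinement that shrinks the out-boundary may inadvertently enlarge the in-boundary (and vice versa), so the reachable/co-reachable intersection must be verified not to introduce spurious crossings of $\partial_\pm(\Wm',A)$ that would undermine the cardinality reduction.
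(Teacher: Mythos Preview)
Your treatment of part~\ref{item:a_OF_otp_charact} is correct and coincides with the paper's proof, which simply invokes Lemma~\ref{lemma:cond-contr-inv}. For parts~\ref{item:b_OF_otp_charact} and~\ref{item:c_OF_otp_charact} the paper omits the argument, merely pointing to ``considerations similar to those in the proof of items~\ref{item:b_SF_otp_charact} and~\ref{item:c_SF_otp_charact} of Theorem~\ref{thm:SF_otp_charact}''. That proof leans on the extended-network / min-cut machinery of Algorithms~\ref{alg:one}--\ref{alg:three}, whereas you opt for a direct combinatorial refinement of the witness~$\Wm$. Your route is arguably cleaner for Problem~2B, since no min-cut algorithm is actually provided for the DDPOF case; but you must then carry the full burden of the boundary-shrinking argument yourself.

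The first reduction in~\ref{item:b_OF_otp_charact} (deleting any $v\in\Vmin\smallsetminus\partial_+(\Wm,A)$ and appealing to Proposition~\ref{prop:AB-invar-equiv-char}\ref{item:a_prop:AB-invar-equiv-char}) is fine. The gap is in the second reduction. You commit to a \emph{subset} $\Wm'\subseteq\Wm$ built as the intersection of ``reachable from~$\Dm$'' and ``co-reachable to~$\Tm$'', both taken through~$\Wm$. Two problems: (i)~``co-reachable to~$\Tm$ through $\Wm\smallsetminus\Vmin$'' is ill-posed as stated, because $\Wm\cap\Tm=\emptyset$; (ii)~more seriously, a node $v\in\partial_+(\Wm,A)\smallsetminus\mathcal{P}$ lies \emph{outside}~$\Wm$, and if $v$ is reachable from~$\Dm$ but fails to reach~$\Tm$, no shrinking of~$\Wm$ removes~$v$ from the out-boundary --- its predecessor in~$\Wm$ stays reachable from~$\Dm$. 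In that case you must \emph{enlarge}~$\Wm$ to absorb~$v$.

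A construction that closes the gap and sidesteps the concern you flag at the end: first set $\Wm''=\Wm\cup\{w\in\Vm: w$ has no path to~$\Tm\}$, then set $\Wm'''=\{w\in\Wm'': w$ is reachable from~$\Dm\}$. One checks directly from Propositions~\ref{prop:AB-invar-equiv-char}\ref{item:a_prop:AB-invar-equiv-char} and~\ref{prop:CA-invar-equiv-char}\ref{item:a_prop:CA-invar-equiv-char} that each step preserves both invariances and the sandwich $\Dm\subseteq\cdot\subseteq\Vm\smallsetminus\Tm$; moreover each step can only shrink \emph{both} $\partial_+$ and $\partial_-$ (the key point is that any edge leaving $\Wm'''$ must already leave $\Wm$, since a tail not reaching $\Tm$ cannot have a head reaching $\Tm$, and a head reachable from $\Dm$ forces the tail to be as well). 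After both steps every node of $\partial_\pm(\Wm''',A)$ is simultaneously reachable from $\Dm$ and reaches $\Tm$, hence lies in $\mathcal{P}$, and if the original boundaries were not contained in $\mathcal{P}$ the total cardinality strictly drops. With this fix, your argument for~\ref{item:b_OF_otp_charact} and~\ref{item:c_OF_otp_charact} goes through.
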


\begin{theorem}\label{thm:DF_otp_charact}
Under Assumptions~\ref{ass:1}--\ref{ass:3},
\benu[label=(\alph*)]
\item If $ \Vmin$, $\Vmout$ are a feasible solution of Problem 2C then \\$\Vmin \supseteq \partial_+(\Zm^\circ(\Vmin),A)$ and $\Vmout \supseteq \partial_-(\Sm^\circ(\Vmout),A)$;\label{item:a_DF_otp_charact}
\item If $ \Vmin$, $\Vmout$ are an optimal solution of Problem 2C then \\$\Vmin = \partial_+(\Zm^\circ(\Vmin),A)\cap \mathcal{P}=\partial_+(\Zm^\circ(\Vmin),A)$ and $\Vmout = \partial_-(\Sm^\circ(\Vmout),A)\cap \mathcal{P}=\partial_-(\Sm^\circ(\Vmout),A)$;\label{item:b_DF_otp_charact}
\item $ \Vmin$, $\Vmout$ are an optimal solution of Problem 2C iff they are the minimal cardinality sets of nodes for which $\Vmin = \partial_+(\Zm^\circ(\Vmin),A)\cap \mathcal{P}=\partial_+(\Zm^\circ(\Vmin),A)$ and $\Vmout = \partial_-(\Sm^\circ(\Vmout),A)\cap \mathcal{P}=\partial_-(\Sm^\circ(\Vmout),A)$.\label{item:c_DF_otp_charact}
\eenu
\end{theorem}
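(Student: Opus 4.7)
The plan is to mirror the structure of the proof of Theorem~\ref{thm:SF_otp_charact}, exploiting the duality between $\Zm^\ast$ and $\Sm^\ast$ granted by Proposition~\ref{prop:dual_net}. For item~$\ref{item:a_DF_otp_charact}$, I would argue directly from the invariance properties: since $\Zm^\ast(\Vmin)$ is $(A,B)$-invariant, item~$\ref{item:a_prop:AB-invar-equiv-char}$ of Proposition~\ref{prop:AB-invar-equiv-char} ensures that every edge leaving $\Zm^\ast(\Vmin)$ lands in $\Vmin$, so $\partial_+(\Zm^\ast(\Vmin),A)\subseteq\Vmin$. Dually, since $\Sm^\ast(\Vmout)$ is $(C,A)$-invariant, item~$\ref{item:a_prop:CA-invar-equiv-char}$ of Proposition~\ref{prop:CA-invar-equiv-char} ensures that every edge leaving $\Sm^\ast(\Vmout)$ originates in $\Vmout$, so $\partial_-(\Sm^\ast(\Vmout),A)\subseteq\Vmout$.

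For item~$\ref{item:b_DF_otp_charact}$, I would proceed by contradiction. Suppose $\Vmin,\Vmout$ are optimal and that $\exists\,v\in\Vmin\smallsetminus\partial_+(\Zm^\ast(\Vmin),A)$. Two subcases arise. If $v\in\Zm^\ast(\Vmin)$, then the recursion of Proposition~\ref{prop:Z} executed on $\Vmin'=\Vmin\smallsetminus\{v\}$ produces the same sequence $\Zm_k$: indeed, the only way removing $v$ from $\Vmin$ could remove an extra node from some $\Zm_{k+1}$ is through an edge landing on $v\notin\Zm_k$, but $v\in\Zm^\ast\subseteq\Zm_k$ for all $k$. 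Hence $\Zm^\ast(\Vmin')=\Zm^\ast(\Vmin)$ and feasibility $\Sm^\ast(\Vmout)\subseteq\Zm^\ast(\Vmin')$ is preserved, contradicting optimality. If instead $v\in\Zm^{\ast\perp}$ with no incoming edges from $\Zm^\ast(\Vmin)$, or more generally $v$ does not intersect any $\Dm$-to-$\Tm$ path $\mathcal{P}$, then by Lemmas~\ref{lem:1} and~\ref{lem:2} the input $v$ plays no role in blocking any such path, so removing it again preserves $\Sm^\ast(\Vmout)\subseteq\Zm^\ast(\Vmin')$. This yields $\Vmin=\partial_+(\Zm^\ast(\Vmin),A)\cap\mathcal{P}=\partial_+(\Zm^\ast(\Vmin),A)$. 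A dual argument applied to $\Vmout$ and the recursion of Proposition~\ref{prop:S}, invoking Proposition~\ref{prop:dual_net} to transfer the reasoning, yields $\Vmout=\partial_-(\Sm^\ast(\Vmout),A)\cap\mathcal{P}=\partial_-(\Sm^\ast(\Vmout),A)$.

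Item~$\ref{item:c_DF_otp_charact}$ then follows by combining items~$\ref{item:a_DF_otp_charact}$ and $\ref{item:b_DF_otp_charact}$: optimality implies the equalities, and conversely, if $\Vmin,\Vmout$ attain these equalities with minimal cardinality subject to feasibility of Problem~2C (i.e., $\Sm^\ast(\Vmout)\subseteq\Zm^\ast(\Vmin)$), then no smaller feasible pair can exist by the same contradiction argument.

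The main obstacle is the first half of item~$\ref{item:b_DF_otp_charact}$: verifying that a ``redundant'' $v\in\Vmin$ can be removed without shrinking $\Zm^\ast$ below $\Sm^\ast(\Vmout)$. Since $\Zm^\ast$ and $\Sm^\ast$ interact only through their containment relation, one must rule out that removing $v$ causes a predecessor of $v$ to leave $\Zm^\ast$, propagating a chain of removals that eventually violates $\Sm^\ast\subseteq\Zm^\ast$. Lemma~\ref{lem:1} controls precisely such chains by relating them to the existence of $\Dm$-to-$\Tm$ paths avoiding $\Vmin$, so the careful combinatorial argument distinguishes between input nodes genuinely blocking paths and those sitting outside $\mathcal{P}$ or strictly inside $\Zm^\ast$.
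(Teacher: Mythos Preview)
Your proposal is correct and takes essentially the same approach as the paper: the paper's own proof is omitted and simply states that it ``follows from the duality of Proposition~\ref{prop:dual_net} and considerations similar to those in the proof of Theorem~\ref{thm:SF_otp_charact}'', which is precisely your plan. Your direct removal argument for item~$\ref{item:b_DF_otp_charact}$ (showing that a node $v\in\Vmin\smallsetminus\partial_+(\Zm^\ast,A)$ can be dropped without changing $\Zm^\ast$) is a slight variation on the extended-network/min-cut route used in the paper's proof of Theorem~\ref{thm:SF_otp_charact}$\ref{item:b_SF_otp_charact}$, but it is valid and arguably cleaner for the purpose here.
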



\section[Feedback synthesis]{Feedback synthesis}
\label{sec:feedback}

\subsection{State feedback for the minimal input cardinality DDPSF}

If computing the input set $ \Vmin$ that solves Problem~2A requires only the topology of the network, computing the control law $u(t) = - Fx $ that solves the DDPSF requires knowledge of the edge weight matrix $A$, and also of the state vector $ x$. 
In particular, using Proposition~\ref{prop:AB-invar-equiv-char}, a standard choice in the literature for the friend $ F$ is given by the following formula (see \cite{basile1969controlled})
\beq\label{eq:friend}
F=U(Z^\top Z)^{-1}Z^\top + F_q,
\eeq
where $ Z $ is the basis matrix associated with a subset $ \Zm $,
and $F_q=M(Z^\perp)^\top$, with $M$ being a $m\times(n-|\Zm|)$ matrix of coefficients and $Z^\perp$ a basis matrix of $\Zm^\perp$.
In fact, from \eqref{eq:friend} we obtain $(A-BF) Z = AZ-BU(Z^\top Z)^{-1}Z^\top Z+BM(Z^\perp)^\top Z=AZ-BU=ZX$,
which proves that $\Zm$ is $(A-BF)$-invariant, see Proposition~\ref{prop:AB-invar-equiv-char}. As $ F_q $ is by construction orthogonal to $ Z$, the design of a friend for $\Zm$ depends only on the term $U(Z^\top Z)^{-1}Z^\top$. In the context of controlled invariant sets of nodes, we have the following characterizations for $F$.

\begin{theorem}\label{thm:friend}
Under Assumptions~\ref{ass:1} and \ref{ass:3}, if $\Zm\subseteq\Vm$ is an $(A,B)$-invariant set of nodes, of basis matrix $Z$, then a friend of $ \Zm$ solving the DDPSF is $F=UZ^\top+F_q$, where $U$ is s.t. $AZ = Z X +BU$ holds for some $ X$.
Furthermore, if $\Vmin$ is a set of input nodes s.t. $\Vmin=\partial_+(\Zm^\circ(\Vmin),A)$, then $F=B^\top A+F_q$ is the unique friend of $\Zm=\Zm^\circ(\Vmin)$, up to $F_q=M(Z^\perp)^\top$.
\end{theorem}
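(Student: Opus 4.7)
The plan is to prove the two parts sequentially, both by exploiting the elementary-matrix structure that is specific to networked systems.

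For the first part, I would observe that since $\Zm$ is a set of nodes, its basis matrix $Z$ is a collection of distinct elementary column vectors, one per node in $\Zm$. These columns are mutually orthonormal, so $Z^\top Z = I_{|\Zm|}$. Plugging this into the classical friend formula $F = U(Z^\top Z)^{-1}Z^\top + F_p$ from \eqref{eq:friend} immediately yields $F = UZ^\top + F_p$. One then verifies, as already sketched after \eqref{eq:friend}, that $(A-BF)Z = AZ - BUZ^\top Z - BM(Z^\perp)^\top Z = AZ - BU = ZX$, which confirms $(A-BF)$-invariance of $\Zm$ via item~\ref{item:c_prop:AB-invar-equiv-char} of Proposition~\ref{prop:AB-invar-equiv-char}, and hence gives the disturbance decoupling through the standard chain $\Dm\subseteq\Zm\subseteq\Vm\smallsetminus\Tm$ under Assumption~\ref{ass:1}.

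For the second part, I would use the hypothesis $\Vmin=\partial_+(\Zm^\ast(\Vmin),A)$ with $\Zm=\Zm^\ast(\Vmin)$. By Definition~\ref{def:boundary}, $\partial_+(\Zm^\ast,A)\subseteq\Zm^{\ast\perp}$, so $\Vmin\cap\Zm=\emptyset$. Since both $B$ and $Z$ are elementary matrices with disjoint column supports, this immediately gives $B^\top Z = 0$ and $B^\top B = I_m$. Left-multiplying the identity $AZ = ZX + BU$ from item~\ref{item:b_prop:AB-invar-equiv-char} of Proposition~\ref{prop:AB-invar-equiv-char} by $B^\top$ yields $U = B^\top AZ$, which is therefore uniquely determined. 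Substituting into the formula of the first part gives $F = B^\top AZZ^\top + F_p$. Using the orthogonal decomposition $I = ZZ^\top + Z^\perp(Z^\perp)^\top$, one can write $B^\top A = B^\top AZZ^\top + B^\top AZ^\perp(Z^\perp)^\top$, and the second term is already of the form $M(Z^\perp)^\top$, so it can be absorbed into the free parameter $F_p$ by a redefinition $M \leftarrow M - B^\top AZ^\perp$. This produces the compact expression $F = B^\top A + F_p$.

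For uniqueness, suppose $F''$ is any other friend of $\Zm$. Then $(A-BF'')Z\subseteq\Zm$ and, since $F$ is also a friend, subtraction gives $B(F-F'')Z\in\Zm$. But the left-hand side lies in $\im B = \Vmin$, and the disjointness $\Vmin\cap\Zm=\emptyset$ forces $B(F-F'')Z=0$; since $B$ has full column rank, $(F-F'')Z=0$, meaning $F-F''$ annihilates $\Zm$. Hence $F-F'' = M''(Z^\perp)^\top$ for some $M''$, which is precisely the parametric form of $F_p$. The main subtle point in the whole proof is the passage from $F=B^\top AZZ^\top+F_p$ to $F=B^\top A+F_p$: the equality holds only after renaming the free parameter, and it relies crucially on the orthogonality $B^\top Z=0$ guaranteed by $\Vmin=\partial_+(\Zm^\ast,A)$; this is also what makes $U$ and hence the nontrivial part of $F$ uniquely determined.
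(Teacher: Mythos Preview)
Your proof is correct and follows essentially the same route as the paper for the first part (using $Z^\top Z=I$ in~\eqref{eq:friend}). For the second part you reach the key identity $U=B^\top AZ$ by the more direct device of left-multiplying $AZ=ZX+BU$ by $B^\top$ and invoking $B^\top Z=0$, $B^\top B=I$; the paper instead writes $[X^\top\;U^\top]^\top=[Z\;\;B]^\dagger AZ+\Gamma\alpha$ and argues that the orthonormality of the columns of $[Z\;\;B]$ (equivalent to your $\Vmin\cap\Zm^\ast=\emptyset$) collapses the pseudo-inverse to the transpose and kills the kernel term. Both arguments are equivalent; yours is slightly more elementary and has the merit of making two points explicit that the paper leaves implicit: the absorption of $B^\top AZ^\perp(Z^\perp)^\top$ into the free term $F_p$ when passing from $B^\top AZZ^\top$ to $B^\top A$, and the direct uniqueness argument showing that any two friends differ by a matrix of the form $M''(Z^\perp)^\top$.
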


\begin{remark}\label{rem:friend}
In an optimal solution of Problem~2A, which by item~$\ref{item:b_SF_otp_charact}$ of Theorem~\ref{thm:SF_otp_charact} is guaranteed to satisfy the condition $\Vmin=\partial_+(\Zm^\circ(\Vmin),A)$, the feedback action of $F$ can be interpreted graphically as follows. The term $B^\top A$ acts as an edge removal for all edges from the parents of the control nodes in $\Vmin$ to $ \Vmin$, modifying the adjacency matrix to $A  - BF = A - BB^\top A$, i.e., canceling the rows of $A$ identified by $\Vmin$. The term $F_q$ instead adds edges of arbitrary weight that have tails in $\Zm^{\circ\perp}$ and heads in $\Vmin$ (see Example~\ref{ex:ex1}).
\end{remark}


\subsection{Output feedback and dynamical feedback for the minimal input/output cardinality DDPOF and DDPDF}

The synthesis outlined by Theorem~\ref{thm:friend} suggests that a similar result can be found for the output injection $-Hy$. Under similar assumptions, it is shown that the friend $H$ that renders the conditional invariant set $\Sm^\circ(\Vmout)$, $(A-HC)$-invariant, admits an intuitive graphical interpretation.
\begin{theorem}    
\label{thm:output_inj}
Under Assumptions~\ref{ass:2} and \ref{ass:3}, and given a $(C,A)$-invariant set of nodes $\Sm^\circ(\Vmout)$, if $\Vmout$ is a set of output nodes s.t. $\Vmout=\partial_-(\Sm^\circ(\Vmout),A)$, then $H=AC^\top+H_q$ is the unique friend of $\Sm^\circ(\Vmout)$, up to $H_q=SN$ with $N$ being a generic $|\Sm|\times p$ matrix of coefficients and $S$ a basis matrix of $\Sm$.
\end{theorem}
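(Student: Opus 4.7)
My plan is to derive this theorem by dualizing Theorem~\ref{thm:friend}. The starting observation is that, by Proposition~\ref{prop:dual}, $\Sm$ being $(C,A)$-invariant is equivalent to $\Sm^\perp$ being $(A^\top,C^\top)$-invariant, and by Proposition~\ref{prop:dual_net} the set $\Sm^{\ast\perp}(\Vmout)$ coincides with the maximal $(A^\top,C^\top)$-invariant set for the dual network, where $\Vmout$ now plays the role of an input set. In this way, the output injection problem for the original system becomes a state feedback problem for the dual system, and Theorem~\ref{thm:friend} becomes directly applicable.

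First, I would translate the hypothesis to the dual side. Using the identity $\partial_\pm(\Wm,A)=\partial_\mp(\Wm^\perp,A^\top)$ recorded right after Definition~\ref{def:boundary}, the condition $\Vmout=\partial_-(\Sm^\ast(\Vmout),A)$ becomes $\Vmout=\partial_+(\Sm^{\ast\perp}(\Vmout),A^\top)$, which is precisely the hypothesis of the second part of Theorem~\ref{thm:friend} when applied to the dual system. In particular, this automatically yields $\Vmout\cap\Sm^{\ast\perp}=\emptyset$ (equivalently $\Vmout\subseteq\Sm^\ast$, which is immediate from $\partial_-(\Sm^\ast,A)\subseteq\Sm^\ast$), supplying the orthonormality property that makes the pseudo-inverse collapse to a transpose in the proof of Theorem~\ref{thm:friend}.

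Next, I would invoke Theorem~\ref{thm:friend} on the dual pair $(A^\top,C^\top)$, with $\Sm^{\ast\perp}$ in the role of $\Zm^\ast$ and $S^\perp$ in the role of the basis matrix $Z$. This produces the unique friend of $\Sm^{\ast\perp}$ given by
\begin{equation*}
F_{\mathrm{dual}}=(C^\top)^\top A^\top+F_{p,\mathrm{dual}}=CA^\top+F_{p,\mathrm{dual}},
\end{equation*}
up to $F_{p,\mathrm{dual}}=M_{\mathrm{dual}}(S^{\perp\perp})^\top=M_{\mathrm{dual}}\,S^\top$, where I used $(\Sm^\perp)^\perp=\Sm$, so that a basis matrix of the orthogonal complement of $\im S^\perp$ is $S$ itself. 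A dimension check gives $M_{\mathrm{dual}}\in\mathbb{R}^{p\times|\Sm|}$.

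Finally, I would transpose back to the original problem. From $(A^\top-C^\top F_{\mathrm{dual}})\Sm^\perp\subseteq\Sm^\perp$, Lemma~\ref{lem:dual} yields $(A-F_{\mathrm{dual}}^\top C)\Sm\subseteq\Sm$, so setting $H\coloneqq F_{\mathrm{dual}}^\top$ gives $H=AC^\top+S\,M_{\mathrm{dual}}^\top=AC^\top+SN$ with $N\coloneqq M_{\mathrm{dual}}^\top\in\mathbb{R}^{|\Sm|\times p}$, exactly as claimed, with uniqueness (up to $H_p=SN$) inherited from Theorem~\ref{thm:friend}. The main obstacle is the careful bookkeeping of the duality: verifying that the sets, basis matrices and disjointness/orthonormality conditions in Theorem~\ref{thm:friend} all map correctly through Propositions~\ref{prop:dual} and~\ref{prop:dual_net}, so that the collapse of the pseudo-inverse to a transpose, which was the critical computational step in the proof of Theorem~\ref{thm:friend}, still takes place on the dual side.
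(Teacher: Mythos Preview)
Your proposal is correct and follows essentially the same strategy as the paper's proof: dualize via Proposition~\ref{prop:dual_net} and the boundary identity $\partial_-(\Sm^\ast,A)=\partial_+(\Sm^{\ast\perp},A^\top)$, apply Theorem~\ref{thm:friend} to the pair $(A^\top,C^\top)$ with $\hat{\Zm}^\ast=\Sm^{\ast\perp}$, and transpose back using Lemma~\ref{lem:dual} to obtain $H=AC^\top+SN$. Your write-up is in fact slightly more explicit than the paper's about why the orthonormality needed for the pseudo-inverse collapse carries over to the dual side, but the argument is the same.
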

As for the synthesis of the output feedback, we have the following result.
\begin{theorem}\label{thm:G}
Under Assumptions~\ref{ass:1}--\ref{ass:3}, and given a $(A,B)$- and $(C,A)$-invariant set of nodes $\Wm$, if $\Vmout$ and $\Vmin$ are two sets of output and input nodes, respectively, s.t. $\Vmout=\partial_-(\Wm,A)$, and $\Vmin=\partial_+(\Wm,A)$, then the unique friend of $\Wm$ solving the DDPOF, i.e., the matrix $G\in\mathbb{R}^{m\times p}$ s.t. $(A-BGC)\Wm\subseteq\Wm$, is given by $G=B^\top AC^\top$.
\end{theorem}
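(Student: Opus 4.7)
The plan is to verify $(A - BGC)\Wm \subseteq \Wm$ directly for $G = B^\top AC^\top$, exploiting the elementary structure of $B$ and $C^\top$: under the boundary hypotheses, $BB^\top$ and $C^\top C$ act as coordinate projectors onto $\Vmin$ and $\Vmout$ respectively, while $B^\top B = I_m$ and $CC^\top = I_p$ because the columns/rows are orthonormal canonical vectors. The starting point is to decompose any $x \in \Wm$ as $x = x_o + x_i$ with $x_o \in \Vmout$ (using $\Vmout = \partial_-(\Wm,A) \subseteq \Wm$) and $x_i \in \Wm \smallsetminus \Vmout \subseteq \ker C$. This immediately gives $Cx = Cx_o$ and, by $(C,A)$-invariance of $\Wm$, $A x_i \in \Wm$.

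The second ingredient is the sharpened boundary identification $\Vmin = \partial_+(\Wm,A)$: together with Lemma~\ref{lemma:cond-contr-inv}, it yields $\Wm \cap \Vmin = \emptyset$ and implies that $A x_o$ admits a \emph{unique} decomposition $A x_o = w_o + b_o$ with $w_o \in \Wm$ and $b_o \in \Vmin$. The disjointness is exactly what forces $B^\top w_o = 0$. The main computation is then a short chain of projector identities: $BGCx = BB^\top A (C^\top C x_o) = BB^\top A x_o = BB^\top b_o = b_o$, so $(A - BGC)x = A x_i + w_o + b_o - b_o = A x_i + w_o \in \Wm$, as required.

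For uniqueness, suppose $G'$ is any other matrix with $(A - BG'C)\Wm \subseteq \Wm$. Then for each $x \in \Wm$, the vector $B(G - G')Cx$ lies in $\Wm$ (as the difference of two elements of $\Wm$) and in $\Vmin = \im B$, hence in $\Wm \cap \Vmin = \{0\}$. Since $\Vmout \subseteq \Wm$, taking $x = C^\top y$ for arbitrary $y \in \mathbb{R}^p$ gives $Cx = CC^\top y = y$, so $B(G - G') = 0$; left-multiplying by $B^\top$ and using $B^\top B = I_m$ then yields $G = G'$. I expect the only delicate point to be insisting on the exact boundary equalities $\Vmout = \partial_-(\Wm,A)$ and $\Vmin = \partial_+(\Wm,A)$ rather than the mere inclusions from Lemma~\ref{lemma:cond-contr-inv}: it is precisely these equalities that rule out stray components in either $x_o$ or $b_o$, guarantee the uniqueness of the $(w_o,b_o)$ split, and ultimately lead to the explicit closed form of $G$ as well as to its uniqueness.
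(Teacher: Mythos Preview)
Your argument is correct and in fact cleaner than the paper's own proof. The paper establishes Theorem~\ref{thm:G} indirectly, by invoking the friend characterizations of Theorems~\ref{thm:friend} and~\ref{thm:output_inj}: it argues that any output-feedback friend must satisfy $F=GC$ and $H=BG$, imports the expressions $F=B^\top A+F_p$ and $H=AC^\top+H_p$ from those earlier results, then projects onto $\Vmout$ and $\Vmin$ respectively and checks that the remainder terms $F_pC^\top C$ and $BB^\top H_p$ vanish. Your route is instead a direct verification: the coordinate-projector identities $C^\top C|_{\Vmout}=\mathrm{id}$, $BB^\top|_{\Vmin}=\mathrm{id}$, together with the splitting $x=x_o+x_i$ and $Ax_o=w_o+b_o$, reduce $(A-BGC)x$ to $Ax_i+w_o\in\Wm$ in one line. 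What you gain is a self-contained argument that does not rely on the earlier friend theorems; what the paper's approach buys is a uniform narrative that presents $G=B^\top AC^\top$ as the simultaneous specialization of $F=B^\top A$ and $H=AC^\top$, which feeds naturally into Theorem~\ref{thm:COM}. Two minor remarks: your appeal to Lemma~\ref{lemma:cond-contr-inv} for $\Wm\cap\Vmin=\emptyset$ is unnecessary, since $\Vmin=\partial_+(\Wm,A)\subseteq\Wm^\perp$ already by Definition~\ref{def:boundary}; and your uniqueness argument, which the paper leaves somewhat implicit, is the crispest part of your write-up.
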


\begin{remark}\label{rem:friend_W}
Item~$\ref{item:b_OF_otp_charact}$ of Theorem~\ref{thm:OF_otp_charact} guarantees that the conditions $\Vmin=\partial_+(\Wm,A)$ and $\Vmout=\partial_-(\Wm,A)$ are satisfied for the optimal solutions of Problem~2B.
Graphically, the effect of $G=B^\top AC^\top$ is to remove the sub-paths of length 1 described in item~$\ref{item:c_thm:ddpof}$ of Theorem~\ref{thm:ddpof}.
\end{remark}
The results of Theorems~\ref{thm:friend}-\ref{thm:G} are used for the synthesis of the reduced order compensator for the DDPDF (see Appendix~\ref{sec:compensator-design} for a recap), as described in the next theorem.
\begin{theorem}\label{thm:COM}
Under Assumptions~\ref{ass:1}--\ref{ass:3}, and given a $(C,A,B)$-pair of node sets $(\Sm^\circ(\Vmout),\Zm^\circ(\Vmin))$, identified by the sets of output and input nodes, $\Vmout$ and $\Vmin$ respectively, if $\Vmout=\partial_-(\Sm^\circ(\Vmout),A)$ and $\Vmin=\partial_+(\Zm^\circ(\Vmin),A)$, then the reduced-order compensator $(PKP^\top,PL,MP^\top,G)$ of order $|\Zm^\circ|-|\Sm^\circ|$ solving the DDPDF is uniquely defined by
\[
\begin{cases}
K=A-BB^\top A-AC^\top C+BB^\top AC^\top C-K_q, \\
L = AC^\top-BB^\top AC^\top+H_q,\\
M = B^\top A-B^\top AC^\top C+F_q,\\
G = B^\top AC^\top,
\end{cases}
\]
 with $P$ being the projection matrix defined in Lemma~\ref{lem:proj} (in Appendix~\ref{sec:compensator-design}), and $K_q=BF_q+H_qC$ with $F_q$, $H_q$ defined in Theorems~\ref{thm:friend} and \ref{thm:output_inj}.
\end{theorem}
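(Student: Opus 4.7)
The plan is to specialize the general reduced-order compensator construction from Appendix~\ref{sec:compensator-design} to the uniquely-determined friends produced by Theorems~\ref{thm:friend}, \ref{thm:output_inj} and~\ref{thm:G}. By hypothesis, $(\Sm^\ast(\Vmout),\Zm^\ast(\Vmin))$ is a $(C,A,B)$-pair (item~\ref{item:c_thm:ddpdf} of Theorem~\ref{thm:ddpdf} ensures $\Sm^\ast\subseteq\Zm^\ast$), so the projection $P$ of Lemma~\ref{lem:proj} onto a complement of $\Sm^\ast$ in $\Zm^\ast$ is well-defined and has rank $|\Zm^\ast|-|\Sm^\ast|$, fixing the compensator order.

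Next, I would invoke the three synthesis results of Section~\ref{sec:feedback} to instantiate the abstract components of the compensator template. The boundary condition $\Vmin=\partial_+(\Zm^\ast(\Vmin),A)$ triggers Theorem~\ref{thm:friend} and gives the unique state-feedback friend $F = B^\top A + F_p$ of $\Zm^\ast$. Dually, $\Vmout=\partial_-(\Sm^\ast(\Vmout),A)$ triggers Theorem~\ref{thm:output_inj} and produces the unique output-injection friend $H = AC^\top + H_p$ of $\Sm^\ast$. Finally, the compatibility of $F$ and $H$ through a common output-feedback gain, enforced when both sets are simultaneously the in-/out-boundaries of the $(C,A,B)$-pair, is fixed by Theorem~\ref{thm:G} as $G = B^\top A C^\top$.

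The core of the proof is then direct substitution of $F$, $H$, $G$ into the canonical reduced-order template $K = A - BF - HC + BGC$, $L = H - BG$, $M = F - GC$ from the appendix. Expanding yields
\begin{align*}
M &= B^\top A - B^\top AC^\top C + F_p,\\
L &= AC^\top - BB^\top AC^\top + H_p,\\
K &= A - BB^\top A - AC^\top C + BB^\top AC^\top C - (BF_p + H_p C),
\end{align*}
which matches the claim after setting $K_p := BF_p + H_p C$. The reduced compensator is then obtained by pre-/post-multiplication by $P$, producing $(PKP^\top, PL, MP^\top, G)$.

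The main obstacle I anticipate is the uniqueness statement. Theorems~\ref{thm:friend} and~\ref{thm:output_inj} pin down $F$ and $H$ only modulo the free parts $F_p$ (with rows in $\Zm^{\ast\perp}$) and $H_p$ (with columns in $\Sm^\ast$). To justify uniqueness at the level of the reduced compensator, I would observe that $P$ has image in $\Zm^\ast\cap\Sm^{\ast\perp}$, so that $F_p P^\top = 0$ and $P H_p = 0$, which in turn gives $P K_p P^\top = 0$. Hence the apparent dependence on $F_p$ and $H_p$ disappears after projection, making $(PKP^\top, PL, MP^\top, G)$ genuinely unique. That the compensator solves the DDPDF then follows from the general correctness of the reduced-order construction of Appendix~\ref{sec:compensator-design}, applied to the $(C,A,B)$-pair $(\Sm^\ast(\Vmout), \Zm^\ast(\Vmin))$ provided by item~\ref{item:a_thm:ddpdf} of Theorem~\ref{thm:ddpdf}.
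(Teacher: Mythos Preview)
Your proposal is correct and follows essentially the same route as the paper: invoke Theorems~\ref{thm:friend} and~\ref{thm:output_inj} for the unique friends $F=B^\top A+F_p$ and $H=AC^\top+H_p$, obtain $G=B^\top AC^\top$, substitute into the full-order template~\eqref{eq:comp}, and then project via $P$ while noting $F_pP^\top=0$, $PH_p=0$, $PK_pP^\top=0$ to secure uniqueness of the reduced compensator. The only place where you are slightly looser than the paper is the justification of $G$: Theorem~\ref{thm:G} is stated for a single set $\Wm$ that is simultaneously $(A,B)$- and $(C,A)$-invariant, whereas here the boundary conditions live on two different sets $\Sm^\ast$ and $\Zm^\ast$; the paper bridges this by first invoking Lemma~\ref{lem:cab_SZ} for existence of a $G$ with $(A-BGC)\Sm^\ast\subseteq\Zm^\ast$ and then arguing via the projections $C^\top C$, $BB^\top$ that $G=B^\top AC^\top$ is forced.
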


\begin{remark}
The effect of the dynamical feedback is to compensate for the flow in $\Vmin$ via a state estimate having non-zero components corresponding to the nodes in $\Zm^\circ(\Vmin)\smallsetminus\Sm^\circ(\Vmout)$ (see Example~\ref{ex:ex2}).
\end{remark}
\section{Numerical examples}
We start this numerical section by giving an example that, under Assumptions~\ref{ass:1} and \ref{ass:2}, illustrates the computation of the node sets $\Zm^\circ$ and $\Sm^\circ$ by means of the recursions of Propositions~\ref{prop:Z} and \ref{prop:S}.\\
\begin{example}
\label{ex:exx0}

Let us consider the network $ \Gm $ of $13$ nodes of Fig.~\ref{fig:exx01}. We assign $\Tm=\{v_{11},v_{12}\}$ and $\Bm=\{v_8\}$ and we note that at the second iteration of the recursion of Proposition~\ref{prop:Z} we converge to the maximal controlled invariant set of nodes contained in $\Vm\smallsetminus\Tm$, $\Zm^\circ(\{v_8\})=\{v_1,v_2,v_3,v_4,v_5,v_6,v_7\}$. Similarly, we assign $\Dm=\{v_1,v_5\}$ and $\Cm=\{v_3\}$ and we note that at the second iteration of the recursion of Proposition~\ref{prop:S} we converge to the minimal conditioned invariant set of nodes containing $\Dm$, $\Sm^\circ(\{v_3\})=\{v_1,v_2,v_3,v_4,v_5\}$. It is easy to see that since $\Dm\subset\Zm^\circ$, the DDPSF is solvable. Similarly, since $(\Sm^\circ,\Zm^\circ)$ constitutes a $(C,A,B)$-pair of node sets, also the DDPDF is solvable. However, we note that the conditions of Theorem~\ref{thm:ddpof} are not met, therefore the DDPOF cannot be solved with these choices of $\Bm$ and $\Cm$.
\begin{figure}[htb!]
    \centering
    \includegraphics[trim=0cm 0cm 0cm 0cm, clip=true, width=1\linewidth]{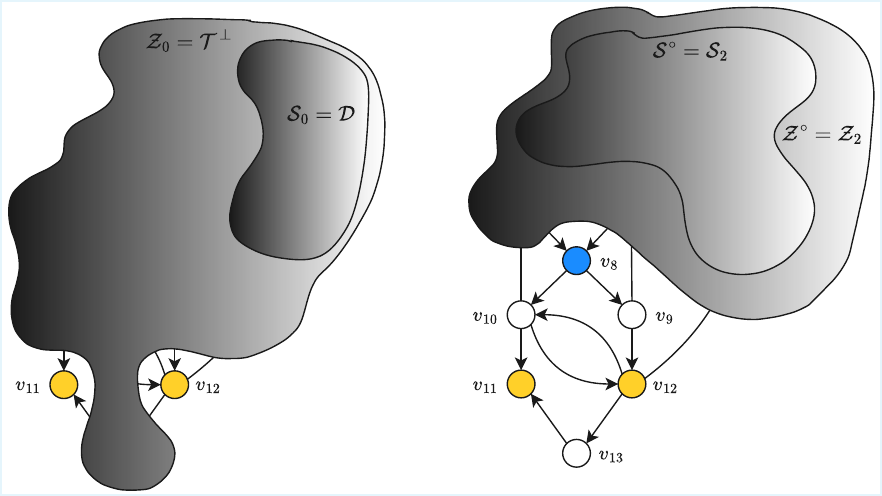}
    \caption{Example~\ref{ex:exx0}, computation of $\Zm^\circ$ and $\Sm^\circ$. Disturbances are in red, targets in yellow, control inputs in blue, whereas the output nodes are in green.}
    \label{fig:exx01}
\end{figure}
 \hfill \qed
\end{example}

The following example highlights the consequences of Theorem~\ref{thm:relat}.
\begin{example}\label{ex:ex0}
Consider the network depicted in Fig.~\ref{fig:S-Z}, with $\Dm=\{v_3\}$, $\Tm=\{v_5\}$, $\Cm=\{v_4\}$ and $\Bm=\{v_2\}$, and take a generic matrix $A=(a_{ij})\in\Am$.
\begin{figure}[h!]
    \centering    \includegraphics[width=0.3\linewidth]{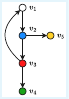}
    \caption{Example~\ref{ex:ex0}. Disturbances are in red, targets in yellow, control inputs in blue, whereas the output node is in green.}\label{fig:S-Z}
    \vspace{-0.1cm}
\end{figure}
By means of Propositions~\ref{prop:Z} and \ref{prop:S} we get $\Sm^\circ=\Vm$ and $\Zm^\circ=\{v_1,v_3,v_4\}$. Now we compute $\Sm^\ast$ by means of the classical recursion in Proposition~\ref{prop:cond_min_semi}. We note that $D=e_3$, $C=e_4^\top$, thus $\im D=\Sm_0=\Sm_0\cap\ker C=\textnormal{span}\{e_3\}$. Since $Ae_3=[a_{13}\;0\;0\;a_{43}\;0]^\top$ it follows that $\Sm_1=\textnormal{span}\{e_3\}+A(\Sm_0\cap\ker C)=\textnormal{span}\{e_3,[a_{13}\;0\;0\;a_{43}\;0]^\top\}$. By noting that also $\Sm_1\cap\ker C=\textnormal{span}\{e_3\}=\Sm_0\cap\ker C$, we conclude that $\Sm_2=\Sm_1=:\Sm^\ast$. From $\Sm^\ast=\textnormal{span}\{e_3,[a_{13}\;0\;0\;a_{43}\;0]^\top\}$ we note that not all the nodes in $\Sm^\circ$ appear as nonzero coordinates in the generators of $\Sm^\ast$, but that all the nonzero coordinates that appear in $\Sm^\ast$ are also nodes of $\Sm^\circ$, as proven in Theorem~\ref{thm:relat}. Furthermore, in this example  $\Zm^\ast=\Zm^\circ$. In fact, by applying the recursion of Proposition~\ref{prop:contr_max_semi}, we note that since $B=e_2$ and $T=e_5^\top$ we get $\Zm_0=\ker T=\textnormal{span}\{e_1,e_2,e_3,e_4\}=\Zm_0+\im B$. Now taking $x\in\mathbb{R}^5$ and imposing $Ax=[a_{13}x_3\;a_{21}x_1\;a_{32}x_2\;a_{43}x_3\;a_{52}x_2]^\top\in\Zm_0+\im B$ means $x_2=0$, since $a_{52}\neq0$. Thus $A^{-1}(\Zm_0+\im B)=\ker e_2^\top$, so $\Zm_1=\ker T\cap\ker e_2^\top=\ker[e_5\;e_2]^\top=\textnormal{span}\{e_1,e_3,e_4\}$. By noting that $\Zm_1+\im B=\Zm_0$, we conclude that $\Zm_2=\Zm_1=:\Zm^\ast$. We note that $\Dm\subseteq\Zm^\circ$, and therefore the DDPSF is solvable. Now note that $\nexists$ a set of nodes both controlled and conditioned invariant, hence the DDPOF based on the sets of nodes is not solvable. In fact, as proven in Theorem~\ref{thm:ddpof}, in all the $\Dm$-to-$\Tm$ paths (e.g., $\{v_3,v_1,v_2,v_5\}$) there should be at least a sub-path of length $1$ from $\Cm$ to $\Bm$. However, the DDPOF based on the subspaces is solvable in this case. We note that the subspace $\Sm^\ast=\text{span}\{e_3,[a_{13}\;0\;0\;a_{43}\;0]^\top\}$ is also a controlled invariant subspace contained in $\ker T$. The DDPOF is thus solvable for $\Wm=\Sm^\ast$ with the friend $G=\frac{a_{13}a_{21}}{a_{43}}$. Moreover, we note that $(\Sm^\circ,\Zm^\circ)$ is not a $(C,A,B)$-pair while $(\Sm^\ast,\Zm^\ast)$ is. Therefore the DDPDF is solvable using the $(C,A,B)$-pair $(\Sm^\ast,\Zm^\ast)$, and it is possible to compute a dynamic compensator (see Appendix~\ref{sec:compensator-design} for more details).
 \hfill \qed

\end{example}

In the following example, an application of Algorithms~\ref{alg:one}--\ref{alg:three} for the DDPSF is presented.
\begin{example}
\label{ex:ex1}
Let us consider the network $ \Gm $ of $7$ nodes and $\Dm~=~\{v_1,v_5\}$ and $\Tm=\{v_4,v_7\}$ shown in Fig.~\ref{fig:ex1a} on the left.
Using Algorithms~\ref{alg:one}--\ref{alg:three} the extended network $\overline{\Gm}$ shown in Fig.~\ref{fig:ex1a} on the right is constructed.
\begin{figure}[htb!]
    \centering
    \includegraphics[clip=true, width=.9\linewidth]{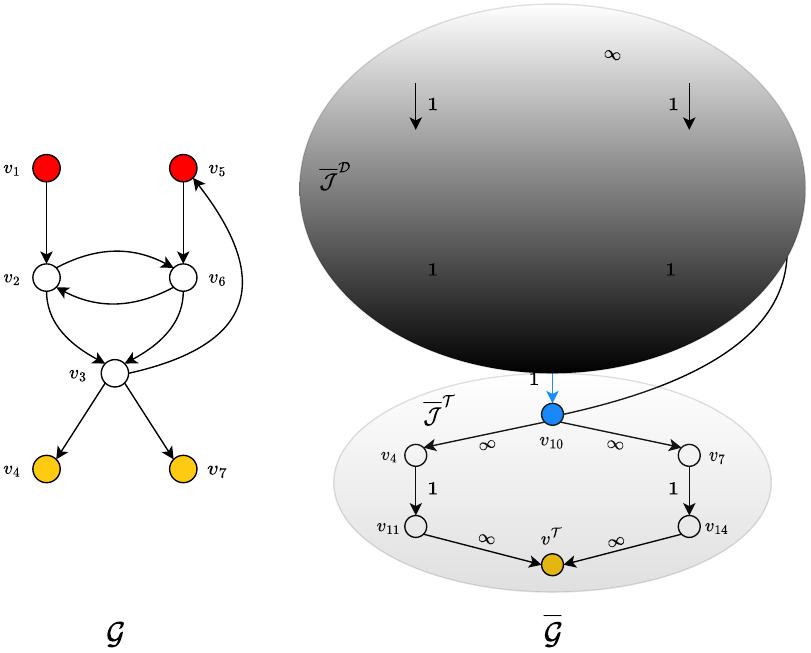}
    \caption{Example~\ref{ex:ex1}. Disturbances are in red, targets in yellow and the control node is in blue.}
    \label{fig:ex1a}
\end{figure}
The minimal cardinality solution of Problem~2A is $\Vmin=\{v_3\}$. In fact, the edge $(v_3,v_{10})$ defines a cut set of edges in the extended network $\overline{\Gm}$. The matrix $B^\top A+F_q=\begin{bmatrix}
0 & a_{3,2} & \ast & \ast & 0 & a_{3,6} & \ast
\end{bmatrix}$ is the unique friend of $\Zm^\circ(\{v_3\})=\{v_1,v_2,v_5,v_6\}$, as proven in Theorem~\ref{thm:friend}, where the symbol $*$ denotes an arbitrary scalar. 
Its effect is to cancel the edges $ ( v_2, v_3 )$ and $ (v_6, v_3 )$ and to add, with arbitrary weights, the edges $(v_4,v_3)$, $(v_7,v_3)$ and the self-loop in $v_3$. \hfill \qed
\end{example}
In the next example, the optimal solutions for the DDPOF and the DDPDF are compared.
\begin{example}
\label{ex:ex2}
Let us consider the network $ \Gm $ of $21$ nodes and $\Dm~=~\{v_1,v_2\}$ and $\Tm=\{v_{20},v_{21}\}$ of Fig.~\ref{fig:ex2a}. For it, the minimal cardinality solution of Problem~2B is $\Vmin=\{v_{14},v_{15}\}$ and $\Vmout=\{v_{12},v_{13}\}$, whereas one possible minimal cardinality solution of Problem~2C is $\Vmin=\{v_{14},v_{15}\}$ and $\Vmout=\{v_7\}$.
\begin{figure}[htb!]
    \centering
    \includegraphics[trim=0cm 0cm 0cm 0cm, clip=true, width=0.45\linewidth]{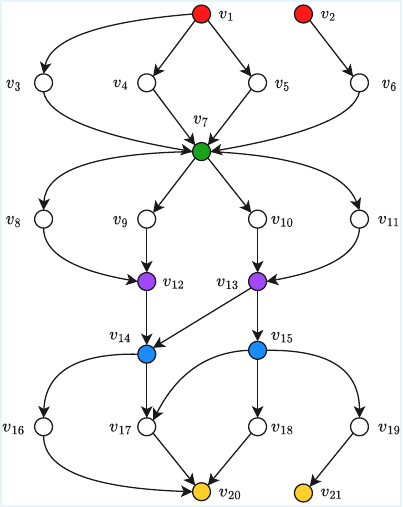}
    \caption{Example~\ref{ex:ex2}. Disturbances are in red, targets in yellow, control inputs in blue, whereas the output nodes are in purple for the DDPOF and in green for the DDPDF.}
    \label{fig:ex2a}
\end{figure}
We note that in the DDPOF $\Zm^\circ(\{v_{14},v_{15}\})=\Sm^\circ(\{v_{12},v_{13}\})=\Wm$, and that the unique friend for $\Wm$ is $B^\top AC^\top=\begin{bmatrix}
a_{14,12} & a_{14,13} \\
0 & a_{15,13}
\end{bmatrix}$, as proven in Theorem~\ref{thm:G}. Its effect is to remove the edges $(v_{12},v_{14})$, $(v_{13},v_{14})$ and $(v_{13},v_{15})$, i.e., the sub-paths of length 1 described in item~$\ref{item:c_thm:ddpof}$ of Theorem~\ref{thm:ddpof}. On the other hand, for the DDPDF the output node $v_7$ can be selected s.t. item~$\ref{item:d_thm:ddpdf}$ of Theorem~\ref{thm:ddpdf} is satisfied. The realization $A_c$ in the extended state-space induced by the reduced-order compensator described in Theorem~\ref{thm:COM}, with projection matrix $P=\begin{bmatrix}
    e_8\;e_9\;e_{10}\;e_{11}\;e_{12}\;e_{13}
\end{bmatrix}^\top$ onto $\Zm^\circ(\Vmin)\smallsetminus\Sm^\circ(\Vmout)=\{v_8,v_9,v_{10},v_{11},v_{12},v_{13}\}$, is given in Fig.~\ref{fig:ex2b}.
 \hfill \qed
\end{example}
\begin{figure}[htb]
    \centering
    \includegraphics[trim=0cm 0cm 0cm 0cm, clip=true, width=.9\linewidth]{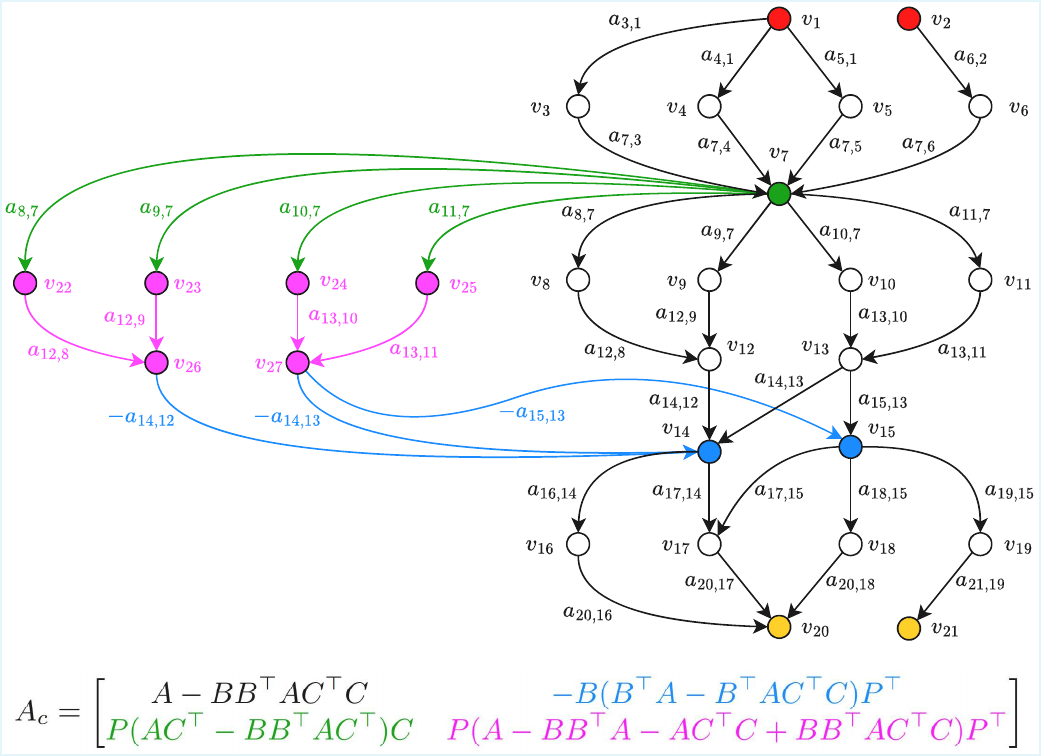}
    \caption{DDPDF of Example~\ref{ex:ex2}. Disturbances are in red, targets in yellow, control inputs in blue, output nodes in green whereas the nodes in the observer are in fuchsia. The colored edges reflect the blocks in the closed-loop realization $A_c$.}
    \label{fig:ex2b}
     \vspace{-0.4cm}
\end{figure}

\section{Conclusion}

The main purpose of this paper is to reformulate the key ideas of geometric control theory for networked systems. In particular the problem of protecting strategic nodes from the influence of disturbances is reformulated as a node-based DDP, and suitable feedback laws achieving the decoupling are obtained based on the node set notions of controlled and conditioned invariance. 
The node-based setting allows to formulate the minimal input/output node allocation as a graphical optimization problem, for which polynomial-complexity solutions are in many cases available.\\
Ongoing work focuses on accounting for closed-loop stability and, more generally, pole placement. Further work includes investigating the behavior of the proposed algorithms on different network topologies (see, e.g., \cite{hamilton2010patterned}) as well as on real-world datasets and problems. 


\appendices

\section{Maximal controlled invariance, and minimal conditioned invariance for subspaces}
\label{app:subsp-max-min}
In this Appendix we report the subspace counterpart of Propositions~\ref{prop:Z} and \ref{prop:S}.
\begin{proposition}[\textbf{Algorithm 4.1.2 of \cite{basile1992controlled}}]\label{prop:contr_max_semi}
Given a vector subspace $\Zm\subseteq\mathbb{R}^n$ and the recursion
\bite
\item$ \Zm_0 = \Zm$;
\item$ \Zm_{k+1} = \Zm\cap A^{-1}(\Zm_{k}+\im B)$,
\eite
$\exists$ an index $r\leq n-1$ s.t. $\Zm_{r+p}=\Zm_{r}$, $\forall\,p\geq1$. The subspace $\Zm^\ast\coloneqq\Zm_r$ is the maximal control invariant subspace contained in $\Zm$.
\end{proposition}
\begin{remark}
The term $A^{-1}(\Zm_{k}+\im B)$ denotes the preimage of $A$ of the subspace $\Zm_{k}+\im B$ which is well defined also when $A$ is not invertible and corresponds to $\{x\in\mathbb{R}^n\;\text{s.t.}\;Ax\in\Zm_{k}+\im B\}$.
\end{remark}
\begin{proposition}[\textbf{Algorithm 4.1.1 of \cite{basile1992controlled}}]\label{prop:cond_min_semi}
Given a vector subspace $\Sm\subseteq\mathbb{R}^n$ and the recursion
\bite
\item $ \Sm_0 = \Sm$;
\item $ \Sm_{k+1}= \Sm + A(\Sm_{k}\cap\ker C)$,
\eite
$\exists$ an index $r\leq n-1$ s.t. $\Sm_{r+p}=\Sm_{r}$, $\forall\,p\geq1$. The subspace $\Sm^\ast\coloneqq\Sm_r$ is the minimal conditioned invariant subspace containing $\Sm$.
\end{proposition}
\begin{remark}
In the context of DDP, the recursions of Propositions~\ref{prop:contr_max_semi} and \ref{prop:cond_min_semi} are initialized with $\Zm=\ker T$ and $\Sm=\im D$, respectively, cf. Assumptions~\ref{ass:1} and~\ref{ass:2}.
\end{remark}
\section{Proofs of Section~\ref{sec:geom-contr}}
\label{sec:app1}

\textbf{Proof of Proposition~\ref{prop:invA}}
Follows as a special case of Proposition~\ref{prop:AB-invar-equiv-char}.
\qed

\textbf{Proof of Proposition~\ref{prop:AB-invar-equiv-char}}
Items~$\ref{item:b_prop:AB-invar-equiv-char}$ and $\ref{item:c_prop:AB-invar-equiv-char}$ are standard for subspaces and hence also for sets of nodes by Definition~\ref{def:sub}, see, e.g., Theorem~4.2 of \cite{trentelman2012control} and Property~4.1.3 of \cite{basile1992controlled}.
To prove item $\ref{item:a_prop:AB-invar-equiv-char}$, note that $(v_i , \, v_j) \in \Em$ iff $A_{ji}\neq 0$. Moreover, the correspondence $v_i\sim e_i$ of Definition~\ref{def:sub} implies that $v_i\in\Zm$ (as a set of nodes) iff $ e_i\in\Zm$ (as a vector space). From $A_{ji}\neq 0$ it follows that $\mathbb{A}e_i=r_i+e_j$, where the vector $r_i\neq0$ iff $\exists \,k\neq j$ s.t. $A_{ki}\neq0$ (i.e., iff $\exists$ an edge starting in $ v_i $ landing in $ v_k$). Therefore, if we assume the $(A,B)$-invariance to hold, then $A\Zm\subseteq\Zm
+\Vmin\implies r_i+e_j\in\Zm+\Vmin$. Thus, since $\Zm$ and $\Vmin$ are generated by canonical basis vectors, from Definition~\ref{def:sub} we conclude that $e_j\in\Zm+\Vmin$, which implies that $v_j\in\Zm\cup\Vmin$. Vice versa, since $\mathbb{A} e_i=
\sum_{v_j \;\text{s.t.}\; (v_i,v_j)\in \Em} e_j$, if $v_i\in\Zm$ and $v_j\in\Zm\cup\Vmin$, then we obtain $\mathbb{A} e_i\in\operatorname{span}\{ e_j \;\text{s.t.}\; v_j \in \Zm \cup \Bm \}$. Thus $\mathbb{A} e_i \in\Zm+\Bm$, $\forall$ $v_i \in \Zm$. By noting that a generic~element $z$ of $\Zm$ is a linear combination of elementary vectors, we conclude that $\mathbb{A}z\in\Zm+\Vmin$, which implies that $A\Zm\subseteq\Zm+\Vmin$.
\qed

\textbf{Proof of Proposition~\ref{prop:CA-invar-equiv-char}}
Item~$\ref{item:b_prop:CA-invar-equiv-char}$ is standard for subspaces and hence also for sets of nodes by Definition~\ref{def:sub}, see, e.g., Theorem~5.5 of \cite{trentelman2012control}. To prove item $\ref{item:a_prop:CA-invar-equiv-char}$, note that $(v_i , \, v_j) \in \Em $ iff $ A_{ji}\neq 0$. Moreover, the correspondence $v_i\sim e_i$ of Definition~\ref{def:sub} implies that $e_i\in\Sm$ iff $ v_i\in\Sm$. 
From $A_{ji}\neq 0$ and taken $e_i\in\Sm\smallsetminus\Vmout$, it follows that $\mathbb{A}e_i=r_i+e_j$, where the vector $r_i\neq0$ iff $\exists \,k\neq j$ s.t. $A_{ki}\neq0$. Therefore, if we assume the $(C,A)$-invariance to hold, then $A(\Sm\smallsetminus\Vmout)\subseteq\Sm
\implies r_i+e_j\in\Sm$. Thus, since $\Sm$ is generated by canonical basis vectors, from Definition~\ref{def:sub} we conclude that $e_j\in\Sm$, which implies that $v_j\in\Sm$. Vice versa, since $\mathbb{A} e_i=
\sum_{v_j \;\text{s.t.}\; (v_i,v_j)\in \Em} e_j$, if $v_i\in\Sm\smallsetminus\Vmout$ and $v_j\in\Sm$, then we obtain $\mathbb{A} e_i\in\operatorname{span}\{ e_j \;\text{s.t.}\; v_j \in \Sm \}$. Thus $\mathbb{A} e_i \in\Sm$, $\forall$ $v_i \in \Sm\smallsetminus\Vmout$. By noting that a generic element $s$ of $\Sm\smallsetminus\Vmout$ is a linear combination of elementary vectors, we conclude that $\mathbb{A}s\in\Sm$, which implies that $A(\Sm\smallsetminus\Vmout)\subseteq\Sm$.
\qed

\textbf{Proofs of Propositions~\ref{prop:Z} and \ref{prop:S}}
The proof of Proposition~\ref{prop:Z} follows by duality from the one of Proposition~\ref{prop:S}, using Proposition~\ref{prop:dual_net}. To prove Proposition~\ref{prop:S}, we first note that $\Dm\subseteq\Sm^\circ$ by construction. Assume by contradiction that $\Sm^\circ$ is not $(C,A)$-invariant, then there must exist at least one node $v_i$ in $\Sm^\circ\smallsetminus\Cm$, a node $v_j$ in $\Sm^{\circ\perp}$ and an edge $(v_i,v_j)$. This contradicts the definition of conditioned invariant node set, given in item~$\ref{item:a_prop:CA-invar-equiv-char}$ of Proposition~\ref{prop:CA-invar-equiv-char}. Minimality is trivial by the construction of the scheme, while the convergence in at most $n$ steps is guaranteed by the fact that the sequence is non-decreasing and the total number of nodes is bounded by $n$.
\qed

\textbf{Proof of Proposition~\ref{prop:dual_net}}
From Proposition~\ref{prop:dual} it follows that $\Zm^{\circ\perp}$ is $(B^\top,A^\top)$-invariant and that $\Sm^{\circ\perp}$ is $(A^\top,C^\top)$-invariant. Suppose by contradiction that $\Zm^{\circ\perp}$ is not the minimal $(B^\top,A^\top)$-invariant node set containing $\Tm$. Then there must  exist an $(B^\top,A^\top)$-invariant node set $\Sm$ containing $\Tm$ s.t. $ | \Sm | < | \Zm^{\circ\perp} |$. Therefore, by Proposition~\ref{prop:dual}, $\exists$ an $(A,B)$-invariant node set $\Sm^\perp\subseteq\Vm\smallsetminus\Tm$ s.t. $ | \Zm^\circ | < | \Sm^\perp |$, which is a contradiction, since $\Zm^\circ$ is the maximal $(A,B)$-invariant node set contained in $\Vm\smallsetminus\Tm$. 
Similarly, suppose by contradiction that $\Sm^{\circ\perp}$ is not the maximal $(A^\top,C^\top)$-invariant node set contained in $\Vm\smallsetminus\Dm$. 
Then $\exists$ an $(A^\top,C^\top)$-invariant node set $\Zm$ contained in $\Vm\smallsetminus\Dm$ s.t. $ | \Zm | > | \Sm^{\circ\perp} |$. 
Hence, by Proposition~\ref{prop:dual}, $\exists$ a $(C,A)$-invariant node set $\Zm^\perp\supseteq\Dm$ s.t. $ | \Sm^\circ | > | \Zm^\perp |$, which is a contradiction, since $\Sm^\circ$ is the minimal $(C,A)$-invariant node set containing $\Dm$.
\qed

\textbf{Proof of Theorem~\ref{thm:relat}}
We note that proving items~$\ref{item:S_1}$ and $\ref{item:Z_1}$ of the Theorem is sufficient to prove the inclusions $\Sm^\circ\supseteq\Sm^\ast$ and $\Zm^\circ\subseteq\Zm^\ast$. We prove item~$\ref{item:S_1}$ and note that $\ref{item:Z_1}$ follows by duality, from Propositions~\ref{prop:dual} and \ref{prop:dual_net}. We say that $x\in\Sm^\ast$ is s.t. generically $x_i\neq 0$ iff there exists a basis of $\Sm^\ast$ containing at least one vector whose $i$-th component is nonzero. By construction of $\Sm^\circ$, node $v_i\notin\Sm^\circ$ iff $\nexists\,\Pm^{\Dm,v_i}\neq\emptyset$ or $\exists\,\Pm^{\Dm,v_i}\neq\emptyset$ s.t. $v_i\notin\partial_-(\Sm^\circ,A)$ and $\Pm^{\Dm,v_i}\cap\partial_-(\Sm^\circ,A)\neq\emptyset$ (note that this is a direct consequence of Lemma~\ref{lem:2} given below). Moreover, $\Dm\subseteq\Sm^\circ$ therefore $v_i\notin\Dm$. Note that by the construction in Proposition~\ref{prop:S} it follows $\partial_-(\Sm^\circ,A)\subseteq\Cm$. 
If $\Pm^{\Dm,v_i}=\emptyset$ then $\forall\,v_j\in\partial_-(\Sm^\circ,A)$, $x=A^\ell e_j$ has $x_i=0$, $\forall\ell\leq{n-1}$. If $\Pm^{\Dm,v_i}\neq\emptyset$ then $\forall\,v_j\in\partial_-(\Sm^\circ,A)$, $\exists\,\ell$ s.t. $x=A^\ell e_j$ has $x_i\neq0$. However, $e_j\in\im C^\top$ because of $\partial_-(\Sm^\circ,A)\subseteq\Cm$, thus from the recursion of Proposition~\ref{prop:cond_min_semi}, $e_j\notin\Sm_{\ell-1}\cap\ker C$ and such an $x$ cannot be in $\Sm^\ast$. Since the above considerations holds constructively for all $x\in\Sm^\ast$, we have proved that $v_i\notin\Sm^\circ\implies x_i=0,\,\forall\,x\in\Sm^\ast$, which corresponds to~$\ref{item:S_1}$.
\qed

\section{Proofs of Section~\ref{sec:standard-DDP}}
The following Lemmas are used to topologically characterize the DDP over networks.

\begin{lemma}\label{lem:1}
Consider the system~\eqref{eq:lin-syst2} and the recursion of Proposition~\ref{prop:Z}. Under Assumptions~\ref{ass:1} and \ref{ass:3}, if $\exists$ a path $\Pm^{v_i,v_t}=\{v_i,v_{i+1},\dots,v_t\}$ from $v_i\in\Vm$ to $v_t\in\Tm$ of length $\leq\ell$, and s.t. $\Pm^{v_{i+1},v_t}\cap\Vmin=\emptyset$, then $v_i\not\in\Zm_{m}$, $\forall\,m\geq\ell$.
\end{lemma}
\begin{proof}

Consider a generic path $\Pm^{v_i,v_t}$ from $v_i$ to $v_t\in\Tm$ of the form $\{v_i,\,v_{i+1},\dots,\,v_{t}\}$. At the first iteration of the algorithm of Proposition~\ref{prop:Z}, $\Zm_1=\Zm_0\smallsetminus\Km_1=(\Vm\smallsetminus\Tm)\smallsetminus\Km_1$, where $\Km_1$ contains all nodes directly connected to $\Tm\smallsetminus\Vmin$. 
Consequently, $\Km_1$ contains all the nodes at distance 1 from $\Tm\smallsetminus\Vmin$. At the second iteration, $\Zm_2=\Zm_1\smallsetminus\Km_2$, where $\Km_2$ contains all nodes directly connected to $\Km_1\smallsetminus\Vmin$. 
Consequently, a generic node $v_{k_2}\in\Km_2$ is at distance 2 from $\Tm\smallsetminus\Vmin$ and it is s.t. $\exists$ a sub-path $\Pm^{v_{k_2},v_t}$ of $ \Pm^{v_i,v_t}$, s.t. $(\Pm^{v_{k_2},v_t}\smallsetminus\{v_{k_2}\})\cap\Vmin=\emptyset$.
Iterating, $\Zm_p=\Zm_{p-1}\smallsetminus\Km_{p}$, where $\Km_{p}$ contains all nodes directly connected to $\Km_{p-1}\smallsetminus\Vmin$.
Consequently, a generic node $v_{k_{p}}\in\Km_{p}$ is at distance $p$ from $\Tm\smallsetminus\Vmin$ and it is s.t. $\exists$ a sub-path $\Pm^{v_{k_{p}},v_t}$ of $ \Pm^{v_i,v_t}$, s.t. $(\Pm^{v_{k_{p}},v_t}\smallsetminus\{v_{k_{p}}\})\cap\Vmin=\emptyset$.
Let us assume that $(\Pm^{v_i,v_t}\smallsetminus\{v_t\})\cap\Tm=\emptyset$, where the length of $\Pm^{v_i,v_t}$ is $\ell$. 
By the previous construction, we note that if $\Pm^{v_{i+1},v_t}\cap\Vmin=\emptyset$, then $v_i\not\in\Zm_{m}$, $\forall\,m\geq\ell$. 
If instead $\exists\,v_k\in(\Pm^{v_i,v_t}\smallsetminus\{v_t\})\cap\Tm$, $k\in\{i,\dots,\,t-1\}$, we consider the sub-path $\Pm^{v_i,v_k}$, of length $\ell-(t-k)$. 
In this case, since $k<t$ we have that $v_i\notin\Zm_r$, $\forall\,r\geq\ell-(t-k)$. Moreover, if $\exists\,v_q\in(\Pm^{v_i,v_t}\smallsetminus\{v_t,\,v_k\})\cap\Tm$, two cases may arise. If $q\in\{i,\dots,\,k-1\}$, since $q<k$, we have that $v_i\notin\Zm_f$, $\forall\,f\geq\ell-(t-q)$. Otherwise, if $q\in\{k+1,\dots,\,t-1\}$, since $k<q$, the considerations of the case in which only the target node $v_k$ appears in $\Pm^{v_i,v_t}\smallsetminus\{v_t\}$ can be applied, i.e., $v_i\notin\Zm_r$, $\forall\,r\geq\ell-(t-k)$. Thus, by iteration, if $\exists$ a path $\Pm^{v_i,v_t}$ of length $\leq\ell$ and s.t. $\Pm^{v_{i+1},v_t}\cap\Vmin=\emptyset$, then $v_i\not\in\Zm_{m}$, $\forall\,m\geq\ell$.
\end{proof}

\begin{lemma}\label{lem:2}

Consider the system~\eqref{eq:lin-syst2} and the recursion of Proposition~\ref{prop:S}. Under Assumptions~\ref{ass:2} and \ref{ass:3}, if $\exists$ a path $\Pm^{v_1,v_i}=\{v_1,\dots,v_{i-1},v_i\}$ from $v_1\in\Dm$ to $v_i\in\Vm$ of length $\leq\ell$, and s.t. $\Pm^{v_{1},v_{i-1}}\cap\Vmout=\emptyset$, then $v_i\in\Sm_{m}$, $\forall\,m\geq\ell$.
\end{lemma}
\begin{proof}
The proof follows a reasoning similar to that in Lemma~\ref{lem:1} and is therefore omitted.
\end{proof}

\textbf{Proof of Theorem~\ref{thm:ddpsf}}
From Theorem~\ref{thm:relat}, $\Dm\subseteq\Zm^\circ(\Vmin)\implies\Dm\subseteq\Zm^\ast$, since $\Zm^\circ\subseteq\Zm^\ast$, hence the DDPSF is solvable when $\ref{eq:DDPSF-iff}$ holds.\\
$(\ref{eq:DDPSF-iff}~\implies\ref{item:c_thm:ddpsf})$:
To prove the \emph{necessity}, we use a contradiction argument. Let us assume that $\exists\,j\,\,\text{s.t.}\,\,\Vmin\cap \mathcal{P}_j=\emptyset$ and, w.l.o.g., let us relabel the network nodes so that $\mathcal{P}_j=\{v_{1},\,v_{2},\dots,\,v_{{k-1}},\,v_{k}\}$, with $v_{1}\in \mathcal{D}$, $v_{k}\in \mathcal{T}$, and with path length $k-1\geq2$. Let us consider $\Zm_0=\Vm\smallsetminus\Tm$ and let us add the assumption that $\{v_{2},\dots,\,v_{{k-1}}\}\cap\Tm=\emptyset$ so that $\{v_{1},\dots,\,v_{{k-1}}\}\subseteq\Zm_0$. Consider the first iteration of the recursion of Proposition~\ref{prop:Z}, for which $\Zm_1=\Zm_0 \smallsetminus \{ v_i \in \Zm_0\,\,\text{s.t.}\,\,(v_i, \, v_j ) \in \Em\,\,\text{and}\,\,v_j \in \Vm\smallsetminus(\Zm_0 \cup \Vmin) \}$.
Since $v_{k}\notin\Zm_0$ and, by the contradictory assumption, $v_{k}\notin\Vmin$, we note that $v_{k}\in\Vm\smallsetminus(\Zm_0 \cup \Vmin)$. Moreover, from the existence of $\Pm_j$ we have $(v_{{k-1}},\,v_{k})\in\Em$, which implies that $v_{{k-1}}\notin\Zm_1$. By iterating this argument, we can remove from $\Zm_q$, $q\in\{2,\dots,\,k-1\}$, at least the node $v_{{k-q}}$. Therefore, when $q=k-1$, the node $v_{1}$ will be removed from $\Zm_{k-1}$, implying that $\Dm\not\subseteq\Zm_{k-1}\implies\Dm\not\subseteq\Zm^\circ$, since $\Zm^\circ\subseteq\Zm_{k-1}$. This is a contradiction, as we have assumed that $\Dm\subseteq\Zm^\circ(\Vmin)$ holds.
This last result can be easily generalized to the case in which $\{v_{2},\dots,\,v_{k-1}\}\cap\Tm\neq\emptyset$. If w.l.o.g., we consider $s^\ast=\min\{s\in\{2,\dots,\,k-1\}\,\,\text{s.t.}\,\,v_{s}\in\Tm\cap\Pm_j\}$,
the above result can then be applied to the sub-path $\{v_{1},\,v_{2},\dots,\,v_{{s^\ast-1}},\,v_{s^\ast}\}$.\\
$(\ref{item:c_thm:ddpsf}\implies~\ref{eq:DDPSF-iff})$: To prove the \emph{sufficiency}, consider a $ \Dm$-to-$\Tm$ path $\mathcal{P}_j$, and w.l.o.g. relabel the networks nodes so that $\mathcal{P}_j=\{v_{1},\,v_{2},\dots,\,v_{{k-1}},\,v_{k}\}$, with $v_{1}\in \mathcal{D}$ and $v_{k}\in \mathcal{T}$. 
By assumption, there exists at least a node $v_i\in\Vmin\cap\Pm_j$. Since $\Vmin\cap\Dm=\emptyset$, if we consider $i=\max\{s\in\{2,\dots,\,k\}\,\,\text{s.t.}\,\,v_{s}\in\Vmin\cap\Pm_j\}$, from Lemma~\ref{lem:1} all nodes in the sub-path $\mathcal{P}_j\smallsetminus\{v_{i},\dots,\,v_{k}\}$ will not be removed from $\Zm_{\hat{k}}$ for some $\hat{k}\geq k-i$, 
unless there exists another $ \Dm$-to-$\Tm$ path $ \Pm_h =\{ v_\alpha, \ldots , v_\omega \} $ intersecting $ \Pm_j \smallsetminus\{v_i,\dots,v_k\}$ in at least one node $ v_\lambda $ and for the sub-path $\Pm_h^{v_\lambda,v_\omega}$ of $ \Pm_h  $ it is $\Pm_h^{v_{\lambda+1},v_\omega} \cap\Vmin=\emptyset$. But then there must be at least another node $ v_\ell $ in the sub-path $ \Pm_j^{v_1, v_\lambda}$ of $  \Pm_j $ s.t. $ v_\ell \in \Vmin$, since otherwise the composite $ \Dm$-to-$\Tm$ path $ \Pm_j^{v_1, v_\lambda} \cup \Pm_h^{v_{\lambda+1},v_\omega} $ has no intersection with $ \Vmin$, which contradicts the assumption of the theorem. 
The argument can be repeated $\forall$ $ \Dm$-to-$\Tm$ paths intersecting $ \Pm_j$.
There must therefore exist an index $ r\leq |\Pm_j|-1 = k-1 $ s.t. $ \Pm_j^{v_1, v_{k-r}} \subseteq\Zm_q=\Zm^\circ$, $\forall\,q\geq r$, meaning that $ v_1  \in \Dm $ is also $  v_1 \in \Zm^\circ$.
Since this must be valid for each $ \Dm$-to-$\Tm$ path $ \Pm_j$, it must be $\Dm\subseteq\Zm_q=\Zm^\circ$, $\forall\,q\geq \hat{r}$ for some $ \hat{r}$.
\qed

The conditions of Theorem~\ref{thm:ddpof} correspond to a special case of those of Theorem~\ref{thm:ddpdf}, therefore the proof of Theorem~\ref{thm:ddpof} is postponed after that of Theorem~\ref{thm:ddpdf}.

\textbf{Proof of Theorem~\ref{thm:ddpdf}}
From Theorem~\ref{thm:relat}, $\Sm^\circ(\Cm)\subseteq\Zm^\circ(\Vmin)\implies\Sm^\ast\subseteq\Zm^\ast$, since $\Zm^\circ\subseteq\Zm^\ast$ and $\Sm^\ast\subseteq\Sm^\circ$, hence $\ref{item:c_thm:ddpdf}$ implies the solvability of the DDPDF.\\
$(\ref{item:d_thm:ddpdf}\implies\ref{item:c_thm:ddpdf})$:
We use a contradiction argument. By construction $\Sm_0\subseteq\Zm_0$. From Propositions~\ref{prop:Z}~and~\ref{prop:S}, $\Sm_k$ grows by adding nodes to $\Sm_{o}$ that belong to paths that originate in $\Dm\smallsetminus\Vmout$, and $\Zm_k$ shrinks by removing nodes from $\Zm_0$ that belong to paths that end in $\Tm\smallsetminus\Vmin$. Thus, since a path originating in $\Dm\smallsetminus\Vmout$ not ending in $\Tm$ can be eventually\footnote{By Lemma~\ref{lem:2}, if such path does not contain output nodes.} added to $\Sm_0$ but will be contained in $\Zm_k$ $\forall\,k$, and since a path ending in $\Tm\smallsetminus\Vmin$ not starting from $\Dm$ can be eventually\footnote{By Lemma~\ref{lem:1}, if such path does not contain input nodes.} removed from $\Zm_0$, but will not be part of $\Sm_k$ $\forall\,k$, the only way $\Sm_\lambda\not\subseteq\Zm_\lambda$ for some $\lambda$ is when at least a node involved in a $\Dm$-to-$\Tm$ path is added in $\Sm_p$ but removed from $\Zm_q$, for some $p,q\leq \lambda$. 
Consider a generic $\Dm$-to-$\Tm$ path $\Pm_j$ of length $\kappa_j-1\geq1$ and, w.l.o.g., relabel the network nodes so that $\Pm_j=\{v_1,\dots,v_{\kappa_j}\}$, with $v_1\in\Dm$ and $v_{\kappa_j}\in\Tm$ and s.t. $o_j<i_j$ (note that the existence of such indexes $\forall\,j\in\{1,\dots,\ell\}$ implicitly implies that every $\Dm$-to-$\Tm$ path contains at least one control node and one output node). 
Thus, by Lemma~\ref{lem:2}, $\Pm_j^{v_1,v_{o_j}}\subseteq\Sm_m$, $\forall\,m\geq o_j-1$. Similarly, by Lemma~\ref{lem:1}, $\Pm_j^{v_{i_j},v_{\kappa_j}}\cap\Zm_d=\emptyset$, $\forall\,d\geq \kappa_j-i_j$. 
A generic node $v_{s}\in\Pm_j^{v_1,v_{i_j-1}}$ cannot be removed from $\Zm_q$, after some $p$, unless $v_{s}\in\Pm_j^{v_{o_j},v_{i_j-1}}$ and some other $ \Dm$-to-$\Tm$ path intersects $ \Pm_j$. 
Let us assume by contradiction that $v_{s}\in\Pm_j^{v_1,v_{o_j}}$ is removed from $\Zm_q$, after some $p\geq1$, which means that there must $\exists$ another $ \Dm$-to-$\Tm$ path $ \Pm_h =\{ v_\alpha, \ldots , v_\omega \} $ s.t. $v_s\in\Pm_h^{v_\alpha,v_{\omega}}$ and $\Pm_h^{v_{s+1},v_{\omega}}\cap\Vmin=\emptyset$. However, by assumption, for such path there exist indexes $ o_h $ and $ i_h$ satisfying $o_h<i_h$, which implies that there must exist a control node $v_{i_h}\in\Pm_h^{v_\alpha,v_s}$. Therefore, the $ \Dm$-to-$\Tm$ path $\Pm_r=\Pm_j^{v_1,v_{s}}\cup\Pm_h^{v_{s+1},v_\omega}$ must have $v_{i_r}\in\Pm_j^{v_1,v_{s}}$ and since condition $o_r<i_r$ must hold by assumption for such path, it follows that $o_r<o_j$, which is a contradiction, since $o_j$ is by definition the minimal output node index on the path $\Pm_j$. Let us now consider the case in which $v_{s}\in\Pm_j^{v_{o_j},v_{i_j-1}}$. 
For such node to be removed from $\Zm_q$, after some $p$, there must $\exists$ another $ \Dm$-to-$\Tm$ path $ \Pm_h =\{ v_\alpha, \ldots , v_\omega \} $ s.t. $v_s\in\Pm_h^{v_\alpha,v_{\omega}}$ and $\Pm_h^{v_{s+1},v_{\omega}}\cap\Vmin=\emptyset$. We show that node $v_s$ cannot be added to $\Sm_p$, after some $p$. In fact, let us assume by contradiction that $v_s\in\Sm_p$, after some $p$, which means that there $\exists$ another $ \Dm$-to-$\Tm$ path $ \Pm_f =\{ v_\gamma, \ldots , v_\delta \} $ s.t. $v_s\in\Pm_f^{v_\gamma,v_{\delta}}$ and $\Pm_f^{v_\gamma,v_{s-1}}\cap\Vmout=\emptyset$. However, the $ \Dm$-to-$\Tm$ path $\Pm_z=\Pm_f^{v_\gamma,v_{s-1}}\cup\{v_s\}\cup\Pm_h^{v_{s+1},v_\omega}$, by construction, does not satisfy condition $o_z<i_z$, which is a contradiction. This reasoning can be applied to each $ \Dm$-to-$\Tm$ path $ \Pm_j$, $j\in\{1,\dots,\ell\}$. Therefore, when $o_j<i_j$, $\forall\,j\in\{1,\dots,\ell\}$, there exists an index $k_o=\max_j(o_j-1)$ s.t. $\Sm_p=\Sm^\circ$, $\forall\,p\geq k_o$, and an index $k_i=\max_j(\kappa_j-i_j)$ s.t. $\Zm_q=\Zm^\circ$, $\forall\,q\geq k_i$, for which $\forall\lambda\geq\max\{k_o,k_i\}$ it holds $\Sm^\circ=\Sm_\lambda\subseteq\Zm_\lambda=\Zm^\circ$.\\
$(\ref{item:c_thm:ddpdf}\implies\ref{item:d_thm:ddpdf})$:
Assume by contradiction that $\exists$ a $\Dm$-to-$\Tm$ path $\Pm_j=\{v_1,\dots,v_{\kappa_j}\}$, with $v_1\in\Dm$ and $v_{\kappa_j}\in\Tm$ s.t. $o_j\geq i_j$. By Lemma~\ref{lem:2}, $\Pm_j^{v_1,v_{o_j}}\subseteq\Sm_m$, $\forall\,m\geq o_j-1$, and by Lemma~\ref{lem:1}, $\Pm_j^{v_{i_j},v_{\kappa_j}}\cap\Zm_d=\emptyset$, $\forall\,d\geq \kappa_j-i_j$. Since $v_{i_j}\in\Pm_j^{v_1,v_{o_j}}$ and $v_{o_j}\in\Pm_j^{v_{i_j},v_{\kappa_j}}$, it is guaranteed that $\Sm_\lambda\not\subseteq\Zm_\lambda$, $\forall\,\lambda\geq\min\{\max\{i_j-1,\,\kappa_j-i_j\},\,\max\{o_j-1,\,\kappa_j-o_j\}\}$, which contradicts item $\ref{item:c_thm:ddpdf}$.
\qed

To prove Theorem~\ref{thm:ddpof} we need the following Lemma.
\begin{lemma}
\label{lemma:cond-contr-inv}
Under Assumptions~\ref{ass:1}--\ref{ass:3}, if $\Wm$ is a controlled and conditioned invariant set of nodes, then $\partial_-(\Wm,A)\subseteq\Vmout$ and $\partial_+(\Wm,A) \subseteq\Vmin$.
\end{lemma}
\begin{proof}
According to Definition~\ref{def:boundary}, each node in the in-boundary $\partial_-(\Wm,A)\subseteq \Wm$ has an edge landing in the out-boundary $\partial_+(\Wm,A)\subseteq \Wm^\perp$. Since $ \Wm $ is conditioned invariant, the starting node must necessarily be in $ \Vmout $ and, since $ \Wm $ is controlled invariant, the landing node must necessarily be in $ \Vmin$. 
\end{proof}

\textbf{Proof of Theorem~\ref{thm:ddpof}}
From Definition~\ref{def:sub}, a controlled and conditioned invariant node set $\Wm$ is also a controlled and conditioned invariant subspace.\\
$(\ref{item:a_thm:ddpof}\implies\ref{item:c_thm:ddpof})$: A set of nodes $\Wm$ is $(A,B)$-invariant and $(C,A)$-invariant iff it is $A$-invariant or each edge exiting $\Wm$ is of the type $(v_i,v_j)$ with $v_i\in\Vmout$ and $v_j\in\Vmin$. In fact, if by contradiction $\exists$ an edge exiting $\Wm$ not of the type $(v_i,v_j)$ with $v_i\in\Vmout$ and $v_j\in\Vmin$, either condition~$\ref{item:a_prop:AB-invar-equiv-char}$ of Proposition~\ref{prop:AB-invar-equiv-char}, or condition~$\ref{item:a_prop:CA-invar-equiv-char}$ of Proposition~\ref{prop:CA-invar-equiv-char}, or both, are violated. 
Since $\Wm$ is controlled and conditioned invariant, from Lemma~\ref{lemma:cond-contr-inv} it must be $\partial_-(\Wm,A)\subseteq\Vmout$ and $\partial_+(\Wm,A)\subseteq\Vmin$. W.l.o.g., it is always possible to relabel the network nodes s.t. a generic $\Dm$-to-$\Tm$ path can be written as in $\ref{item:c_thm:ddpof}$. 
Since $\Dm\subseteq\Wm\subseteq\Vm\smallsetminus\Tm$, a generic $\Dm$-to-$\Tm$ path must contain an edge from $\partial_-(\Wm,A)$ to $ \partial_+(\Wm,A)$), so there must exist a sub-path of length 1 of the form $\{v_{p},v_{{p+1}}\}$ with $v_{p}\in\Vmout$ and $v_{p+1}\in\Vmin$.\\
$(\ref{item:c_thm:ddpof}\implies\ref{item:a_thm:ddpof})$: Let $\Pm_1,\dots,\Pm_\ell$ be all possible $\Dm$-to-$\Tm$ paths. 
Let us consider a generic $\Dm$-to-$\Tm$ path $\Pm_j$, of length $\kappa_j-1\geq1$. By hypothesis, $\exists$ a sub-path of length 1 of the form $\{v_{p},v_{p+1}\}$ with $v_{p}\in\Vmout$ and $v_{p+1}\in\Vmin$, $p\in\{1,\dots,\kappa_j-1\}$, $j\in\{1,\dots,\ell\}$. 
It follows that $o_j\leq p$ and $i_j\geq p+1$, where $o_j$ and $i_j$ are the indexes defined in item $\ref{item:d_thm:ddpdf}$ of Theorem~\ref{thm:ddpdf}, which implies that $o_j<i_j$. 
Since, by assumption, this holds $\forall\,j\in\{1,\dots,\ell\}$, then $\Sm^\circ(\Vmout)\subseteq\Zm^\circ(\Vmin)$, by Theorem~\ref{thm:ddpdf}. Condition $\Sm^\circ(\Vmout)\subseteq\Zm^\circ(\Vmin)$ alone does not guarantee the existence of a set $\Wm$ both controlled and conditioned invariant. In fact, given $v_i\in\Vmout$, $v_j\in\Vmin$ suppose there exists a path $\Pm^{v_i,v_j}=\{v_i,v_k,v_j\}$ with $v_k\in\Vm\smallsetminus(\Vmout\cup\Vmin)$. Therefore, if we assume $\exists$ a controlled invariant superset $\Wm$ of $\Sm^\circ$ this means that it must contain node $v_k$. However, $v_k\notin\Vmout$ implies that $\Wm$ is not conditional invariant, i.e., the edge $(v_k,v_j)$ exits $\Wm$ and is not of the type $(v_i,v_j)$ with $v_i\in\Vmout$ and $v_j\in\Vmin$. Similarly, if we assume $\exists$ a conditioned invariant subset $\Wm$ of $\Zm^\circ$ this means that it must exclude node $v_k$. 
However, $v_k\notin\Vmin$ implies that $\Wm$ is not controlled invariant, i.e., the edge $(v_i,v_k)$ exits $\Wm$ and is not of the type $(v_i,v_j)$ with $v_i\in\Vmout$ and $v_j\in\Vmin$. These arguments can be extended to longer paths. Let us consider the partitions $\Vmout=\Vmout^{\Sm^\circ}\cup\Vmout^{\Sm^{\circ\perp}}$ and $\Vmin=\Vmin^{\Zm^{\circ\perp}}\cup\Vmin^{\Zm^{\circ}}$, where $\Vmout^{\Sm^\circ}\subseteq\Sm^\circ$, $\Vmout^{\Sm^{\circ\perp}}\subseteq\Sm^{\circ\perp}$, $\Vmin^{\Zm^{\circ\perp}}\subseteq\Zm^{\circ\perp}$, and $\Vmin^{\Zm^{\circ}}\subseteq\Zm^\circ$. 
By Proposition~\ref{prop:S}, if $\exists$ an edge $(v_i,v_j)$ exiting $\Sm^\circ$ then $v_i\in\Vmout^{\Sm^\circ}$. If $\Sm^\circ$ is not controlled invariant, it means that $\exists$ at least an edge $(v_i,v_j)$ exiting $\Sm^\circ$ s.t. $v_j\notin\Vmin$. 
Condition $\Sm^\circ\subseteq\Zm^\circ$ ensures that $v_j\in\Zm^\circ\smallsetminus\Sm^\circ$. 
In fact, by Proposition~\ref{prop:Z}, the only edges allowed to enter $\Zm^{\circ\perp}$ must land on $\Vmin$, thus the only condition in which there may exist an edge $(v_i,v_j)$ exiting $\Sm^\circ$ with $v_i\in\Sm^\circ$ but $v_j\notin\Zm^\circ$ is when $v_j\in\Vmin^{\Zm^{\circ\perp}}$, which implies controlled invariance of $ \Sm^\circ$ and the proof is finished. 
If instead $ v_j \in \Zm^\circ$, then let us construct a growing sequence of conditioned invariant subsets which we denote $ \Sm^{k,\circ}$ (with $ \Sm^{1,\circ}= \Sm^\circ$).
If we consider the out-boundary of the set $\Sm^\circ$, defined as $\Fm_1=\Sm^{\circ\perp}\cap\mathbb{A}\Vmout^{\Sm^\circ}$,
with $\mathbb{A}\Vmout^{\Sm^\circ}$ denoting the set of children of $\Vmout^{\Sm^\circ}=\Vmout\cap\Sm^\circ$, and the initialization $\Sm_0^2=\Sm^\circ\cup(\Fm_1\smallsetminus\Vmin)$, then we ensure that $v_j\in\Sm_0^2$. 
By applying the recursion of Proposition~\ref{prop:S}, we converge to the minimal conditioned invariant set $\Sm^{2,\circ}$ containing $\Sm_0^2$ (and thus $\Sm^\circ$ and $\Dm$).
If $\Sm^{2,\circ}$ is not controlled invariant, it means that $\exists$ at least an edge $(v_i,v_j)$ exiting $\Sm^{2,\circ}$ s.t. $v_j\notin\Vmin$ and $v_i\in\Vmout^{\Sm^{2,\circ}}\subseteq\Vmout^{\Sm^{\circ\perp}}$, with $\Vmout^{\Sm^{2,\circ}}=\Vmout^{\Sm^{\circ\perp}}\cap\Sm^{2,\circ}$ denoting the set of output nodes in $\Sm^{2,\circ}\smallsetminus\Sm^\circ$ \footnote{Given three sets $A$, $B$ and $C$, and the inclusion $A\subseteq B\subseteq C$, it is $B\smallsetminus A=(C\smallsetminus A)\cap B$. Therefore, by the associativity of the intersection and since $\Sm^\circ\subseteq\Sm^{2,\circ}\subseteq\Vm$, it follows that $\Vmout^{\Sm^{2,\circ}}=\Vmout^{\Sm^{\circ\perp}}\cap\Sm^{2,\circ}=(\Vmout\cap\Sm^{\circ\perp})\cap\Sm^{2,\circ}=(\Vmout\cap(\Vm\smallsetminus\Sm^\circ))\cap\Sm^{2,\circ}=\Vmout\cap((\Vm\smallsetminus\Sm^\circ)\cap\Sm^{2,\circ})=\Vmout\cap(\Sm^{2,\circ}\smallsetminus\Sm^\circ)$.}. 
If we consider the out-boundary of the set $\Sm^{2,\circ}\smallsetminus\Sm^\circ$, $\Fm_2=\Sm^{2,\circ\perp}\cap\mathbb{A}\Vmout^{\Sm^{2,\circ}}$,
a new initialization $\Sm_0^3=\Sm^{2,\circ}\cup(\Fm_2\smallsetminus\Vmin)$ guarantees convergence to another conditional invariant set ${\Sm^{3,\circ}}$ containing $\Sm_0^3$ (and thus $\Sm^{2,\circ}$, $\Sm^\circ$ and $\Dm$). 
At iteration $k>1$, the out-boundary of the set $\Sm^{k,\circ}\smallsetminus\Sm^{(k-1),\circ}$ becomes $\Fm_k=\Sm^{k,\circ\perp}\cap\mathbb{A}\Vmout^{\Sm^{k,\circ}}$,
with $\Vmout^{\Sm^{k,\circ}}=\Vmout^{\Sm^{(k-1),\circ\perp}}\cap\Sm^{k,\circ}$. 
Therefore, by applying the recursion of Proposition~\ref{prop:S} with the initialization $\Sm_0^{k+1}=\Sm^{k,\circ}\cup(\Fm_k\smallsetminus\Vmin)$, we converge to the minimal conditioned invariant set $\Sm^{(k+1),\circ}$ containing $\Sm_0^{k+1}\supseteq\Dm$. The iterative scheme converges when $\Sm^{(k+1),\circ}=\Sm_0^{k+1}=\Sm^{k,\circ}$, i.e., when $\Fm_k\smallsetminus\Vmin=\emptyset$ or, equivalently, when $\Fm_k\subseteq\Vmin$. 
Since $\Fm_k$ denotes the out-boundary of the set $\Sm^{k,\circ}\smallsetminus\Sm^{(k-1),\circ}$, this implies control invariance for $\Sm^{k,\circ}$. Finally, since by hypothesis $\exists$ at least one sub-path of length 1 of the form $\{v_{p},v_{p+1}\}$ with $v_{p}\in\Vmout$ and $v_{p+1}\in\Vmin$ for each $\Dm$-to-$\Tm$ path, the conditional and controlled invariant set $\Sm^{k,\circ}$ is s.t. $\Sm^{k,\circ}\subseteq\Zm^\circ\subseteq\Vm\smallsetminus\Tm$. We can set $\Wm=\Sm^{k,\circ}$, which concludes the proof.
The number of iterations before convergence of the proposed scheme depends on the distribution of the output nodes in the paths and it is upper bounded by $|\Vmout^{\Sm^{\circ\perp}}|+1$. Therefore, $\exists\,s\leq |\Vmout^{\Sm^{\circ\perp}}|+1$ s.t. $\Sm^{k,\circ}=\Sm^{(k+1),\circ}$, $\forall\;k\geq s$, and it is both controlled and conditioned invariant. The proof can be dualized on $\Zm^\circ$, by showing that under condition $\ref{item:c_thm:ddpof}$, it is always possible to converge to a subset of $\Zm^\circ$ that is both controlled invariant and conditioned invariant.
\qed

\textbf{Proof of Corollary~\ref{cor:ddpof}}
It is sufficient to observe that, since $\Sm^\circ(\Vmout)$ is the minimal conditional invariant node set containing $\Dm$ it follows that necessarily $\Sm^\circ\subseteq\Wm$. Moreover, since $\Zm^\circ(\Vmout)$ is the maximal control invariant node set contained in $\Vm\smallsetminus\Tm$, $\Wm\subseteq\Zm^\circ$ necessarily. 
\qed

\section{Proofs of Section~\ref{sec:min-node-DDP}}
\textbf{Proof of Proposition~\ref{prop-cut-extended}}
In $ \overline{\Gm}$ all ``true'' edges of $ \Gm $ have $ \infty $ weight, hence they are never selected by the min-cut algorithm.
Therefore all edges on which each cut set is computed in $ \overline{\Gm}$ must have the form $ (v_i, v_{i+n})$, where $ v_i $ and $ v_{i+n} $ are duplicated nodes, and these edge weights are all equal to 1. By construction, each cut has value equal to both the number of edges across it and hence to the number of nodes in the corresponding node cut set in~$\Gm$. This implies that the min-cut value is equal to $ | \Vmin|$.
\qed

\textbf{Proof of Theorem~\ref{thm:main_paper2}}
Each solution obtained through Algorithms~\ref{alg:one}--\ref{alg:three} defines an edge cut set on the extended network $\overline{\mathcal{G}}$ which, from Proposition~\ref{prop-cut-extended}, corresponds to a node cut set on $ \Gm$, denoted $ \Vmin$. By construction, Algorithms~\ref{alg:one}--\ref{alg:three} ensure that $\Vmin \cap \Pm_j \neq \emptyset$ $\forall$ $\Dm$-to-$\Tm$ path $\Pm_j$ of $ \Gm$, which guarantees the solvability of the DDPSF by item $\ref{item:c_thm:ddpsf}$ of Theorem~\ref{thm:ddpsf}.
Optimality arises from the minimality of the cut set's cardinality. 
By contradiction assume that $\exists$ $ \Vmin^\prime $ s.t. $ | \Vmin^\prime |< | \Vmin|$ that solves Problem~2A. 
Then, from Proposition~\ref{prop-cut-extended}, $ \Vmin^\prime $ must produce a node cut set in $ \Gm $ of smaller cardinality than the one associated with $ \Vmin$, meaning, again from Proposition~\ref{prop-cut-extended}, that also the edge cut set in $ \overline{\Gm}$ must be smaller than the one associated with $ \Vmin$. This however means that the min-cut/max-flow algorithm is not optimal, which is a contradiction. 
\qed

\textbf{Proof of Theorem~\ref{thm:SF_otp_charact}}
$\ref{item:a_SF_otp_charact}$: If $\exists\,v_i\in\partial_+(\Zm^\circ(\Vmin),A)\smallsetminus\Vmin$, then $\Zm^\circ(\Vmin)$ is not controlled invariant.\\
$\ref{item:b_SF_otp_charact}$: Saying that $\Vmin$ identifies a cut set of nodes for $\Gm$ means that the set of edges $\Hm=\{(v_i,\,v_{i+n})\in\overline{\Em}\,\,\text{s.t.}\,\,v_i\in\Vmin\}$ is a cut set of edges for $\overline{\Gm}$, i.e., when removed, there are no more direct paths connecting $v^\Dm$ to $v^\Tm$. If we call $\hat{\Bm}=\{v_{i+n}\,\,\text{s.t.}\,\,v_i\in\Vmin\,\,\text{and}\,\,(v_i,\,v_{i+n})\in\Hm\}$, then $\Hm=\Vmin\times\hat{\Bm}$. 
Let \(\overline{\Jm}^\Dm \subseteq \overline{\Vm} \) and \( \overline{\Jm}^\Tm = \overline{\Vm} \smallsetminus \overline{\Jm}^\Dm \) be the partition of \( \overline{\Vm} \) induced by the cut.
As a consequence, $\Vmin\subseteq\overline{\Jm}^\Dm$ and $\hat{\Bm}\subseteq\overline{\Jm}^\Tm$. Denote by $\overline{\Em}^{\Dm}$ and $\overline{\Em}^{\Tm}$ the sets of edges within $\overline{\Jm}^\Dm$ and $\overline{\Jm}^\Tm$, respectively. The set of edges with tail in $\overline{\Jm}^\Dm$ and head in $\overline{\Jm}^\Tm$ is denoted by $\overline{\Em}^{\Dm\to\Tm}$, while the set of edges with tail in $\overline{\Jm}^\Tm$ and head in $\overline{\Jm}^\Dm$ is denoted by $\overline{\Em}^{\Tm\to\Dm}$. By construction, it follows $\overline{\Em}^{\Dm\to\Tm}=\Hm$. In fact, if by contradiction a node in $\overline{\Jm}^\Tm\smallsetminus\hat{\Bm}$ is reached by an edge with tail in $\overline{\Jm}^\Dm\smallsetminus\Vmin$, then it is also reachable from $v^\Dm$, which is a contradiction according to the definition of $ \overline{\Jm}^\Dm $ given above. 
By construction we write the extended network as $\overline{\Gm}=(\overline{\Vm},\overline{\Em}, \overline{A})=(\overline{\Jm}^\Dm\cup\overline{\Jm}^\Tm,\overline{\Em}^{\Dm}\cup\overline{\Em}^{\Tm}\cup\overline{\Em}^{\Tm\to\Dm}\cup\Hm, \overline{A})$.
Consider a generic path $\overline{\Pm}^{v^\Dm,v^\Tm}$ in $ \overline{\Gm}$ from $v^\Dm$ to $v^\Tm$. Assume it has length $\geq 7$ and that it intersects $\Dm$, $\Tm$ and $\Vmin$ only once\footnote{In the case of length 5 a duplicate disturbance node $v_{d+n}$, with $v_{d}\in\Dm$, is directly connected to a target node $v_{t}\in\Tm$, i.e., $\exists$ a path $\overline{\Pm}^{v^\Dm,v^\Tm}=\{v^\Dm,\,v_{d},\,v_{d+n},\,v_{t},\,v_{t+n},\,v^\Tm\}$. 
Algorithms~\ref{alg:one}--\ref{alg:three} are s.t. $v_{t}\in\Vmin$, thus, since $\overline{\Pm}^{v_{t+n},v^\Tm}\cap\Vmin=\emptyset$, by Lemma~\ref{lem:1} $v_{t}\notin\Zm^\circ$. 
}. Such path must exist because of the decomposition of $\overline{\Gm}$ induced by the cut $\Hm$. Moreover, $\exists\,v_i\in\Vmin$, $v_{i+n}\in\hat{\Bm}$ s.t. $\overline{\Pm}^{v^\Dm,v^\Tm}=\{v^\Dm,\,v_{d},\,v_{d+n},\dots,\,v_i,\,v_{i+n},\dots,\,v_{t},\,v_{t+n},\,v^\Tm\}$. Assume that $\overline{\Pm}^{v^\Dm,v^\Tm}\cap\Vmin=v_{i}$ is a single node, but one can relax this assumption and apply Lemma~\ref{lem:1} on the shortest sub-path $\overline{\Pm}^{v_{i},v^\Tm}$ of $\overline{\Pm}^{v^\Dm,v^\Tm}$, with $v_{i}\in\Vmin$ and $\overline{\Pm}^{v_{i},v^\Tm}\cap\Vmin=v_{i}$ only. After relabeling the network nodes, Lemma~\ref{lem:1} guarantees that $\exists\,k$ s.t. $v_i \notin \Zm_k$, where $\Zm_k$ is the $k$-th iteration of the recursion of Proposition~\ref{prop:Z}. 
Hence $v_i\notin\Zm^\circ$ and $ v_i \in \partial_+(\Zm^\circ(\Vmin),A)$. Iterating the previous reasoning on all the paths connecting $v^\Dm$ to $v^\Tm$, due to the structure of the cut set $\Hm$,
we obtain $\Vmin=\partial_+(\Zm^\circ(\Vmin),A)$. Finally, we note that necessarily $\partial_+(\Zm^\circ(\Vmin),A) = \partial_+(\Zm^\circ(\Vmin),A) \cap \mathcal{P}$, i.e., $\Vmin\subseteq\Pm$, otherwise $\Bm $ cannot be minimal (and hence optimal).\\
$\ref{item:c_SF_otp_charact}$: This is straightforward given the construction of cut set of nodes given above.
\qed

\textbf{Proof of Proposition~\ref{thm:sub}}
Since $\Vmin$ is a feasible solution of Problem~2A it follows from Theorem~\ref{thm:SF_otp_charact}$(a)$ that $\partial_+(\Zm^\circ(\Vmin),A)\subseteq\Vmin$. From controlled invariance, $\forall\,x\in\Zm^\circ$ we have $Ax\in\Zm^\circ+\partial_+(\Zm^\circ(\Vmin),A)$, therefore $\Zm^\circ$ is $(A,\Tilde{B})$-invariant for each submatrix $\Tilde{B}$ of $B$ whose range contains $\partial_+(\Zm^\circ(\Vmin),A)$.
\qed

\textbf{Proof of Theorem~\ref{thm:OF_otp_charact}}
$\ref{item:a_OF_otp_charact}$: See Lemma~\ref{lemma:cond-contr-inv}.\\
$\ref{item:b_OF_otp_charact}$, $\ref{item:c_OF_otp_charact}$: The proof follows from considerations similar to those in the proof of items~$\ref{item:b_SF_otp_charact}$ and $\ref{item:c_SF_otp_charact}$ of Theorem~\ref{thm:SF_otp_charact} and is therefore omitted.
\qed

\textbf{Proof of Theorem~\ref{thm:DF_otp_charact}}
The proof follows from the duality of Proposition~\ref{prop:dual_net} and considerations similar to those in the proof of Theorem~\ref{thm:SF_otp_charact} and is therefore omitted.
\qed

\section{Proofs of Section~\ref{sec:feedback}}

\textbf{Proof of Theorem~\ref{thm:friend}}
To prove that $F=UZ^\top + F_q$, where $U$ is s.t. $AZ = Z X +BU$ holds for some $ X$, it is sufficient to consider equation~\eqref{eq:friend} and observe that when $ Z$ is a basis matrix for $ \Zm$, then $ Z^\top Z = I$, where $I$ denotes the identity matrix.
The formula for $F$ assumes that the matrix $U$ is given. We prove in the following that $U$ can be rewritten as a function of $A$, $B$ and $Z$. We first start by noting that $AZ = ZX+BU=[Z\quad B][X^\top\quad U^\top]^\top$. Therefore,
\beq\label{eq:pseud1}
[X^\top\quad U^\top]^\top=[Z\quad B]^\dagger AZ+\Gamma\alpha,
\eeq
where $[Z\quad B]^\dagger$ is the pseudo-inverse of $[Z\quad B]$, $\Gamma$ is a matrix whose columns form a basis for $\ker[Z\quad B]$ and $\alpha$ is a free parameter matrix of dimensions $\dim(\ker[Z\quad B])\times|\Zm|$, whose columns specify arbitrary linear combinations of the basis vectors of $\ker[Z\quad B]$. 
By construction, condition $\Vmin=\partial_+(\Zm^\circ(\Vmin),A)$ implies that there are no submatrices of $B$ whose range contains $\partial_+(\Zm^\circ(\Vmin),A)$. 
As a consequence, $\Zm^\circ(\Vmin)\cap\Vmin=\emptyset$ ensures that 
the columns of $[Z\quad B]$ are orthonormal, thus the pseudo-inverse becomes $[Z\quad B]^\dagger=[Z\quad B]^\top$. Hence, \eqref{eq:pseud1} simplifies to $[X^\top\quad U^\top]^\top=[Z\quad B]^\top AZ$, which leads to $U=B^\top AZ$, that inserted in equation~\eqref{eq:friend} gives $F=B^\top A+F_q$.
\qed

\textbf{Proof of Theorem~\ref{thm:output_inj}}
From Proposition~\ref{prop:dual_net} we know that $\Sm^\circ=\Sm^\circ(\Vmout,\,\Dm, A)$ is s.t. $\Sm^{\circ\perp}=\Zm^\circ(\Vmout,\Vm\smallsetminus\Dm,A^\top)$, thus $\partial_-(\Sm^\circ,A)=\partial_+(\Sm^{\circ\perp},A^\top)=\partial_+(\Zm^\circ(\Vmout,\Vm\smallsetminus\Dm,A^\top),A^\top)$. If we call $\hat{A}=A^\top$ and $\hat{B}=C^\top$, and $M=N^\top$ we can write $\hat{\Zm}^\circ(\Vmout)=\Zm^\circ(\Vmout,\Vm\smallsetminus\Dm,\hat{A})$. Therefore, by Theorem~\ref{thm:friend} we know that for the $(\hat{A},\hat{B})$-invariant set $\hat{\Zm}^\circ(\Vmout)$, if $\Vmout=\partial_+(\hat{\Zm}^\circ(\Vmout),\hat{A})$, then $F=\hat{B}^\top\hat{A}+F_q$ is the unique friend of $\hat{\Zm}^\circ(\Vmout)$, i.e., $(\hat{A}-\hat{B}(\hat{B}^\top\hat{A}+F_q))\hat{\Zm}^\circ(\Vmout)\subseteq\hat{\Zm}^\circ(\Vmout)$. By substituting, we get $(A^\top-C^\top(CA^\top+MS^\top))\Sm^{\circ\perp}\subseteq\Sm^{\circ\perp}\stackrel{\text{Lemma~\ref{lem:dual}}}{\iff}(A^\top-C^\top(CA^\top+MS^\top))^\top\Sm^\circ\subseteq\Sm^\circ$, i.e., $(A-HC)\Sm^\circ(\Vmout)\subseteq\Sm^\circ(\Vmout)$,
with $H=AC^\top+SN$.
\qed

\textbf{Proof of Theorem~\ref{thm:G}}
In general $\Sm^\circ(\Vmout)\subseteq\Wm\subseteq\Zm^\circ(\Vmin)$, i.e., $\Sm^\circ\neq\Zm^\circ$. The structure of the friend $G$ of $\Wm$, whose existence is guaranteed by Theorem~\ref{thm:ddpof}, indicates that when interpreted as controlled invariant set, the friend $F$ s.t. $(A-BF)\Wm\subseteq\Wm$ must have structure $F=GC$. Similarly, when $\Wm$ is interpreted as conditioned invariant set the friend $H$ s.t. $(A-HC)\Wm\subseteq\Wm$ must have structure $H=BG$. From Theorem~\ref{thm:friend}, condition $\partial_+(\Wm,A)=\Vmin$ guarantees that $B^\top A+F_q$ is a state-feedback friend of $\Wm$ and it is unique if $\Wm=\Zm^\circ$. In fact, $B^\top A$ may remove edges with head in $\Vmin$ but tail not in $\partial_-(\Wm,A)$. Similarly, from Theorem~\ref{thm:output_inj}, condition $\partial_-(\Wm,A)=\Vmout$ guarantees that $AC^\top+H_q$ is an output-injection friend of $\Wm$ and it is unique if $\Wm=\Sm^\circ$. In fact, $AC^\top$ may remove edges with tail in $\Vmout$ but head not in $\partial_+(\Wm,A)$. The output-feedback constraint forces the action of $F$ to be projected onto $\Vmout$, leading to $F=(B^\top A+F_q)C^\top C$. Condition $\partial_-(\Wm,A)=\Vmout$ ensures that the action of $F$ is to remove only the edges with tail in the in-boundary of $\Wm$. Similarly, the output-feedback constraint forces the action of $H$ to be projected onto $\Vmin$, leading to $H=BB^\top(AC^\top+H_q)$. Condition $\partial_+(\Wm,A)=\Vmin$ ensures that the action of $H$ is to remove only the edges with head in the out-boundary of $\Wm$. Under those assumptions it is clear that $F_q(C^\top C)=0$ and $(BB^\top)H_q=0$. The resulting output-feedback friend of $\Wm$ has therefore the unique form given by $G = B^\top A C^\top$.
\qed

\textbf{Proof of Theorem~\ref{thm:COM}}
Once $\Vmin$ and $\Vmout$ are assigned, we know from Theorems~\ref{thm:friend} and \ref{thm:output_inj} that the conditions $\Vmin=\partial_+(\Zm^\circ(\Vmin),A)$ and $\Vmout=\partial_-(\Sm^\circ(\Vmout),A)$ lead to the unique friends $F=B^\top A+F_q$ and $H=AC^\top+H_q$ of $\Zm^\circ$ and $\Sm^\circ$, respectively. 
Since $(\Sm^\circ(\Vmout),\Zm^\circ(\Vmin))$ is a $(C,A,B)$-pair, the only edges exiting $\Sm^\circ$ and landing outside of $\Zm^\circ$ must have head in $\partial_+(\Zm^\circ(\Vmin),A)$. 
A matrix $G$ s.t. $(A-BGC)\Sm^\circ\subseteq\Zm^\circ$ exists because of Lemma~\ref{lem:cab_SZ} (in Appendix~\ref{sec:compensator-design}). Since $C^\top C$ and $BB^\top$ are the orthogonal projections onto $\Vmout$ and $\Vmin$, respectively, and since $\Vmout=\partial_-(\Sm^\circ(\Vmout),A)$ and $\Vmin=\partial_+(\Zm^\circ(\Vmin),A)$, $G=B^\top AC^\top$ and it is unique, because of Theorem~\ref{thm:G}. Substituting in~\eqref{eq:comp} yields the full-order compensator. 
Finally, since $\Sm^\circ\subseteq\Zm^\circ$, $|\Zm^\circ\smallsetminus\Sm^\circ|=|\Zm^\circ|-|\Sm^\circ|=k$. Therefore, the reduced-order compensator of order $k$ is obtained by~\eqref{eq:red_comp} with the projection matrix $P\in\mathbb{R}^{k\times n}$ obtained by stacking the $k$ canonical row vectors corresponding to the nodes in $\Zm^\circ\smallsetminus\Sm^\circ$, and for which $PK_qP^\top=0$, $PH_q=0$ and $F_qP^\top=0$.
\qed

\section{Dynamical feedback case: Compensator synthesis and closed-loop system}
\label{sec:compensator-design}


If we consider the open-loop system~\eqref{eq:lin-syst2}, the existence of a dynamical feedback corresponds to the following closed-loop system (see \cite{trentelman2012control} for more details):
\beq
\begin{aligned}\label{eq:closed}
\begin{bmatrix}
\dot{x} \\
\dot{\hat{x}}
\end{bmatrix}
&=
\underbrace{\begin{bmatrix}
A - BGC & -BM \\
LC & K
\end{bmatrix}}_{=:A_c}
\begin{bmatrix}
x \\
\hat{x}
\end{bmatrix}+
\underbrace{\begin{bmatrix}
D \\
0
\end{bmatrix}}_{=:D_c} w \\
z &=
\begin{bmatrix}
T & 0
\end{bmatrix}
\begin{bmatrix}
x \\
\hat{x}
\end{bmatrix}=T_c\begin{bmatrix}
x \\
\hat{x}
\end{bmatrix}.
\end{aligned}
\eeq
The following result connects the existence of an $A_c$-invariant subspace for the closed-loop system to the existence of a $(C,A, B)$-pair.
Consider a subspace $\Wm_c$ of the extended state-space $\Xm\times \hat{\Xm}$, $\hat{\Xm}\subseteq\Xm=\mathbb{R}^n$, and let
$ \pi(\Wm_c):=\{x\in\Xm\,\,\text{s.t.}\,\,\exists\,\hat{x}\in\hat{\Xm}\,\,\text{s.t.}\,\,\begin{bmatrix}
x^\top &
\hat{x}^\top
\end{bmatrix}^\top\in\Wm_c\}$ and $\cap(\Wm_c):=\{x\in\Xm\,\,\text{s.t.}\,\,\begin{bmatrix}
x^\top &
0
\end{bmatrix}^\top\in\Wm_c\}$ be, respectively, the projection of $ \Wm_c$ onto $ \Xm$ and the intersection of $ \Wm_c$ with $ \Xm$.
\begin{theorem}[\textbf{Theorem~6.2 of \cite{trentelman2012control}}]
If a subspace $\Wm_c\subseteq\Xm\times \hat{\Xm}$ is $A_c$-invariant, with $A_c$ as in~\eqref{eq:closed}, then $(\cap(\Wm_c),\pi(\Wm_c))$ is a $(C,A, B)$-pair.
\label{thm:ActoCAB}
\end{theorem}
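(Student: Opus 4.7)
The plan is to verify directly the three defining properties of a $(C,A,B)$-pair: namely, (i) $\cap(\Wm_c) \subseteq \pi(\Wm_c)$; (ii) $\cap(\Wm_c)$ is $(C,A)$-invariant; (iii) $\pi(\Wm_c)$ is $(A,B)$-invariant. Item (i) is immediate from the definitions, since any $x \in \cap(\Wm_c)$ comes paired with $\hat x = 0$, which in particular realizes $x$ as an element of $\pi(\Wm_c)$. So the substance lies in (ii) and (iii), both of which should drop out of the $A_c$-invariance of $\Wm_c$ by looking at the two block rows of $A_c$ separately.

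For (ii), I would pick any $x \in \cap(\Wm_c) \cap \ker C$, so that $[x^\top\;\, 0]^\top \in \Wm_c$ and $Cx = 0$. Applying $A_c$ and exploiting $Cx = 0$ collapses both the off-diagonal $-BGC$ term in the top block and the $LC$ term in the bottom block, giving
\[
A_c \begin{bmatrix} x \\ 0 \end{bmatrix} \;=\; \begin{bmatrix} Ax \\ 0 \end{bmatrix} \in \Wm_c,
\]
which places $Ax$ in $\cap(\Wm_c)$ and yields $A(\cap(\Wm_c) \cap \ker C) \subseteq \cap(\Wm_c)$.

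For (iii), I would take $x \in \pi(\Wm_c)$ with witness $\hat x$ so that $[x^\top\;\, \hat x^\top]^\top \in \Wm_c$. The $A_c$-invariance of $\Wm_c$ produces
\[
A_c \begin{bmatrix} x \\ \hat x \end{bmatrix} \;=\; \begin{bmatrix} Ax - B(GCx + M\hat x) \\ LCx + K\hat x \end{bmatrix} \in \Wm_c,
\]
so the top block $Ax - B(GCx + M\hat x)$ lies in $\pi(\Wm_c)$ (with the bottom block $LCx + K\hat x$ serving as its witness). Rewriting, $Ax \in \pi(\Wm_c) + \im B$, which is exactly $(A,B)$-invariance.

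No real obstacle is expected: the proof is essentially bookkeeping, and the only subtle point is keeping straight the role of the witness vector $\hat x$ when projecting $\Wm_c$ onto $\Xm$, versus the forced choice $\hat x = 0$ when intersecting with $\Xm$. The asymmetric vanishing of the two off-diagonal coupling terms $-BGC$ and $LC$ is what separates the $(C,A)$-invariance argument (which uses $Cx = 0$) from the $(A,B)$-invariance argument (which absorbs the coupling into $\im B$).
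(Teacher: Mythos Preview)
Your proof is correct and is essentially the standard argument from the cited reference. The paper itself does not provide a proof of this theorem: it is stated in the appendix with the attribution ``Theorem~6.2 of \cite{trentelman2012control}'' and no further argument, so there is nothing to compare against beyond the original source, whose proof your verification mirrors.
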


The synthesis of a decoupling compensator for system~\eqref{eq:closed} is derived in Theorem~\ref{thm:full} (see Theorem~6.4 of \cite{trentelman2012control} for a proof). We first use the following Lemma.
\begin{lemma}[\textbf{Lemma~6.3 of \cite{trentelman2012control}}]\label{lem:cab_SZ}
    If $(\Sm,\Zm)$ is a $(C,A,B)$-pair, then $\exists\,G\in\mathbb{R}^{m\times p}$ s.t. $(A-BGC)\Sm\subseteq\Zm$.
\end{lemma}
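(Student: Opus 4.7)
The plan is to construct $G$ concretely by choosing its action on the subspace $C\Sm\subseteq\mathbb{R}^p$ first, and then extending it arbitrarily to a full matrix $G\in\mathbb{R}^{m\times p}$. Concretely, for any $s\in\Sm$, we have $s\in\Zm$ (since $\Sm\subseteq\Zm$) and hence by $(A,B)$-invariance of $\Zm$ there exist $z(s)\in\Zm$ and $u(s)\in\mathbb{R}^m$ with $As=z(s)+Bu(s)$. The idea is to pick $G$ so that $BGCs\equiv Bu(s)\pmod{\Zm}$ for every $s\in\Sm$; then $(A-BGC)s=z(s)+Bu(s)-BGCs\in\Zm$, which is exactly the claim.

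First I would formalize this as a lifting problem on quotient spaces. Consider the linear map
\[
\phi:\Sm\longrightarrow \mathbb{R}^n/\Zm,\qquad \phi(s)=[As]_{\Zm}.
\]
Two observations are crucial: (i) by $(C,A)$-invariance of $\Sm$, for every $s\in\Sm\cap\ker C$ we have $As\in\Sm\subseteq\Zm$, so $\phi$ vanishes on $\Sm\cap\ker C$; (ii) by $(A,B)$-invariance of $\Zm$, the image of $\phi$ lies in the subspace $(\Zm+\im B)/\Zm\subseteq\mathbb{R}^n/\Zm$. From (i), $\phi$ factors through $\Sm/(\Sm\cap\ker C)$, and since $C$ induces an isomorphism between $\Sm/(\Sm\cap\ker C)$ and $C\Sm$, we obtain a well-defined linear map
\[
\bar\phi:C\Sm\longrightarrow (\Zm+\im B)/\Zm,\qquad \bar\phi(Cs)=[As]_{\Zm}.
\]

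The next step is to lift $\bar\phi$ through $B$. The map $B:\mathbb{R}^m\to \im B$ is surjective, so the induced map $\mathbb{R}^m\to(\Zm+\im B)/\Zm$, $u\mapsto [Bu]_{\Zm}$, is also surjective. Choose any basis $\{c_1,\dots,c_r\}$ of $C\Sm$ and pick $u_i\in\mathbb{R}^m$ with $[Bu_i]_\Zm=\bar\phi(c_i)$. Define $G_0:C\Sm\to\mathbb{R}^m$ by $G_0 c_i=u_i$ and extend linearly. Then extend $G_0$ to an arbitrary linear map $G:\mathbb{R}^p\to\mathbb{R}^m$ (say, by setting it to zero on any algebraic complement of $C\Sm$ in $\mathbb{R}^p$); this yields the required $G\in\mathbb{R}^{m\times p}$. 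Finally, for any $s\in\Sm$, we have $BGCs=BG_0 Cs\equiv \bar\phi(Cs)=[As]_\Zm\pmod{\Zm}$, so $(A-BGC)s\in\Zm$, which proves $(A-BGC)\Sm\subseteq\Zm$.

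The main obstacle, and the only place where both defining properties of a $(C,A,B)$-pair are simultaneously needed, is the well-definedness of $\bar\phi$: one must check that if $Cs_1=Cs_2$ for $s_1,s_2\in\Sm$, then $[As_1]_\Zm=[As_2]_\Zm$. This uses $(C,A)$-invariance of $\Sm$ on the difference $s_1-s_2\in\Sm\cap\ker C$ (to conclude $A(s_1-s_2)\in\Sm\subseteq\Zm$), and the containment of the image of $\bar\phi$ inside $(\Zm+\im B)/\Zm$ uses $(A,B)$-invariance of $\Zm$. Once $\bar\phi$ is shown to be a well-defined linear map taking values in the image of $B$ modulo $\Zm$, the construction of $G$ is a routine application of the fact that surjective linear maps admit linear sections.
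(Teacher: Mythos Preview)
Your argument is correct and is essentially the standard proof of this classical result: factor $s\mapsto [As]_{\Zm}$ through $C\Sm$ using $(C,A)$-invariance of $\Sm$, observe that its image lies in $(\Zm+\im B)/\Zm$ by $(A,B)$-invariance of $\Zm$, lift through $B$, and extend arbitrarily to $\mathbb{R}^p$. All the delicate points (well-definedness of $\bar\phi$, surjectivity of $u\mapsto[Bu]_\Zm$ onto $(\Zm+\im B)/\Zm$) are handled correctly.

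Note, however, that the paper does not supply its own proof of this lemma: it is simply quoted as Lemma~6.3 of \cite{trentelman2012control}, with the proof deferred to that reference. Your construction coincides with the textbook argument given there, so there is nothing to contrast.
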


\begin{theorem}[\textbf{Full-order compensator}]\label{thm:full}
Let $(\Sm,\Zm)$ be a $(C,A,B)$-pair. Choose $G\in\mathbb{R}^{m\times p}$ s.t. $(A-BGC)\Sm\subseteq\Zm$, $F\in\mathbb{R}^{m\times n}$ s.t. $(A-BF)\Zm\subseteq\Zm$ and $H\in\mathbb{R}^{n\times p}$ s.t. $(A-HC)\Sm\subseteq\Sm$. The dynamical feedback
    \beq
    \begin{split}\label{eq:comp}
        \dot{\hat{x}}&=(A-BF-HC+BGC)\hat{x}+(H-BG)y\\&=K\hat{x}+Ly,\\
        u&=-(F-GC)\hat{x}-Gy=-M\hat{x}-Gy,
    \end{split}
    \eeq
    is s.t. $\Sm=\cap(\Wm_c)$ and $\Zm=\pi(\Wm_c)$ for the $A_c$-invariant subspace of $\Xm\times\Xm$ defined by $\Wm_c=\{\begin{bmatrix}
x_\Sm^\top + x_{\Zm}^\top & x_{\Zm}^\top
\end{bmatrix}^\top\,\,\text{s.t.}\,\,x_\Sm\in\Sm,\,x_\Zm\in\Zm\}$, with $A_c$ as in~\eqref{eq:closed}. The 4-tuple $(K,L,M,G)$ defined by~\eqref{eq:comp} is called full-order compensator.  
\end{theorem}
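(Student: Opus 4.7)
The plan is to verify the three assertions of the theorem separately: (i) the set equality $\cap(\Wm_c)=\Sm$; (ii) the set equality $\pi(\Wm_c)=\Zm$; and (iii) the $A_c$-invariance of $\Wm_c$. Items (i) and (ii) are direct from the definition $\Wm_c=\{[x_\Sm^\top+x_\Zm^\top,\,x_\Zm^\top]^\top : x_\Sm\in\Sm,\,x_\Zm\in\Zm\}$: an element of $\Wm_c$ has vanishing second component iff $x_\Zm=0$, which identifies $\cap(\Wm_c)$ with $\Sm$; and since $\Sm\subseteq\Zm$, the first components $x_\Sm+x_\Zm$ range exactly over $\Zm$, so $\pi(\Wm_c)=\Zm$. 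I would dispatch these two items first, as they require no calculation.

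The substantive step is (iii). I would fix an arbitrary $w=[x_\Sm^\top+x_\Zm^\top,\,x_\Zm^\top]^\top\in\Wm_c$, substitute the compensator formulas $K=A-BF-HC+BGC$, $L=H-BG$, $M=F-GC$ into $A_c$ as given in~\eqref{eq:closed}, and compute $A_cw$ blockwise. After the telescoping cancellations of the $BGC$ terms, the top block simplifies to $(A-BGC)x_\Sm+(A-BF)x_\Zm$ and the bottom block to $(H-BG)Cx_\Sm+(A-BF)x_\Zm$. Writing the target $A_cw=[x_\Sm'^\top+x_\Zm'^\top,\,x_\Zm'^\top]^\top$ then forces $x_\Zm'$ to equal the bottom block and $x_\Sm'$ to equal the top minus the bottom, which collapses neatly to $x_\Sm'=(A-HC)x_\Sm$.

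The crux of the argument is verifying that $x_\Sm'\in\Sm$ and $x_\Zm'\in\Zm$, and it is precisely here that all three friend hypotheses come into play. For $x_\Sm'=(A-HC)x_\Sm$, the hypothesis $(A-HC)\Sm\subseteq\Sm$ applies directly. For $x_\Zm'$, the summand $(A-BF)x_\Zm$ lies in $\Zm$ by $(A-BF)\Zm\subseteq\Zm$, while the remaining $(H-BG)Cx_\Sm$ I would rewrite as $(A-BGC)x_\Sm-(A-HC)x_\Sm$; its first term is in $\Zm$ by the Lemma~\ref{lem:cab_SZ} hypothesis $(A-BGC)\Sm\subseteq\Zm$, and its second term is in $\Sm\subseteq\Zm$ by the already-used $(C,A)$-invariance of $\Sm$. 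The only nontrivial obstacle I anticipate is spotting this precise additive decomposition so that all three friend hypotheses are invoked cleanly; once it is in hand, $A_c$-invariance follows by linearity, and the final sentence of the statement (naming the resulting 4-tuple a full-order compensator) is a definition that needs no proof.
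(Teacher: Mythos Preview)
Your proposal is correct: the block computation of $A_cw$ and the decomposition $(H-BG)Cx_\Sm=(A-BGC)x_\Sm-(A-HC)x_\Sm$ are exactly what is needed, and each of the three friend hypotheses is invoked in the right place. Note, however, that the paper does not supply its own proof of this theorem; it simply cites Theorem~6.4 of \cite{trentelman2012control}. Your argument is the standard direct verification one finds there, so there is nothing to contrast.
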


The following theorem specifies the dimension of the reduced-order observer needed to solve the problem.
\begin{theorem}[\textbf{Proposition~2.3 of \cite{schumacher1980compensator}}]\label{thm:red}
Let $(\Sm,\Zm)$ be a $(C,A,B)$-pair. Then there exists an extension-space $\hat{\Xm}$ of dimension $|\Zm| - |\Sm|$, and matrices $K,\,M,\,L$ and $G$, s.t. $\Sm=\cap(\Wm_c)$ and $\Zm=\pi(\Wm_c)$ for some $A_c$-invariant subspace $\Wm_c$ of $\Xm\times \hat{\Xm}$, and $A_c$ as in~\eqref{eq:closed}.
\end{theorem}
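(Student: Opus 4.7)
The strategy is to compress the full-order compensator of Theorem~\ref{thm:full} by eliminating the redundant $\Sm$-component of the observer state. The full-order observer state $\hat{x}$ lives in all of $\Xm$, but its effective closed-loop dynamics only need to track the projection of the physical state onto a complement of $\Sm$ inside $\Zm$; any $\Sm$-contribution in $\hat{x}$ is absorbed by the first block of $\Wm_c$, which is exactly the slack this theorem seeks to exploit.

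First I would pick a complement $\Ym$ of $\Sm$ in $\Zm$, so that $\Zm = \Sm \oplus \Ym$ with $\dim \Ym = |\Zm|-|\Sm|$. In the networked setting this is cleanest: take $\Ym$ to be the span of the nodes in $\Zm \smallsetminus \Sm$, and let $P\in\mathbb{R}^{(|\Zm|-|\Sm|)\times n}$ be the corresponding node-selection matrix, which satisfies $PP^\top = I$, $P|_\Sm = 0$, and $P^\top P x_\Ym = x_\Ym$ for every $x_\Ym \in \Ym$. Set $\hat{\Xm} = \mathbb{R}^{|\Zm|-|\Sm|}$ and define the reduced 4-tuple $(K_{\rm red}, L_{\rm red}, M_{\rm red}, G_{\rm red}) = (PKP^\top, PL, MP^\top, G)$ starting from the full-order matrices of~\eqref{eq:comp}. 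The candidate invariant subspace is
\[
\Wm_c = \bigl\{ [\,x_\Sm + x_\Ym,\; P x_\Ym\,]^\top : x_\Sm \in \Sm,\; x_\Ym \in \Ym \bigr\} \subseteq \Xm \times \hat{\Xm}.
\]

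Then I would verify three properties. The identity $\pi(\Wm_c) = \Sm + \Ym = \Zm$ is immediate, and $\cap(\Wm_c) = \Sm$ follows because $Px_\Ym=0$ forces $x_\Ym=0$ (injectivity of $P$ on $\Ym$). For $A_c$-invariance, substituting $[x_\Sm+x_\Ym,\,Px_\Ym]^\top$ into the closed-loop $A_c$ of~\eqref{eq:closed} with the reduced matrices, the first block evaluates to $(A-BGC)x_\Sm + (A-BF)x_\Ym$, which lies in $\Zm$ by Lemma~\ref{lem:cab_SZ} and the friend property of $F$. Decomposing this as $x_\Sm' + x_\Ym'$ with $x_\Sm' \in \Sm$, $x_\Ym' \in \Ym$, the required consistency is that the second block equals $Px_\Ym'$. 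Expanding the second block using $K = A-BF-HC+BGC$ and $L = H-BG$, the $HCx_\Ym$ and $BGCx_\Ym$ terms cancel, leaving $P(A-BF)x_\Ym + P(H-BG)Cx_\Sm$. Comparing with $Px_\Ym' = P(A-BGC)x_\Sm + P(A-BF)x_\Ym$, the task reduces to the single identity $P(A-HC)x_\Sm = 0$, which holds because $(A-HC)\Sm \subseteq \Sm$ by the friend property of $H$, and $P$ vanishes on $\Sm$.

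The main obstacle is the block bookkeeping: the cancellation that aligns the second-block update with $Px_\Ym'$ relies on the precise combination of the three friends $F$, $G$, $H$ inside $K$ and $L$, together with the two orthogonality-style facts $P|_\Sm = 0$ and $P^\top P|_\Ym = \mathrm{id}_{\Ym}$. Any complement $\Ym$ works at the abstract level, but only the adapted one provides clean formulas; in the network setting of Theorem~\ref{thm:COM}, $\Ym = \mathrm{span}(\Zm\smallsetminus\Sm)$ makes $P$ a literal indicator projector, so all the projections above reduce to node-set selections and the $(C,A,B)$-pair structure translates directly into the observer dimension $|\Zm|-|\Sm|$.
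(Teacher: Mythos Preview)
Your proposal is correct and matches the paper's approach: the paper does not prove Theorem~\ref{thm:red} in-text but cites Schumacher, and the explicit construction you give---the node-selection projector $P$ onto $\Zm\smallsetminus\Sm$, the reduced 4-tuple $(PKP^\top,PL,MP^\top,G)$, and the candidate $\Wm_c=\{[x^\top,\,(Px)^\top]^\top:x\in\Zm\}$---is exactly what the paper records in Lemma~\ref{lem:proj} and Theorem~\ref{thm:reduced}. Your $A_c$-invariance verification, reducing to $P(A-HC)\Sm=0$ via the friend property of $H$ and $P|_\Sm=0$, is the computation the paper omits by deferring to the reference.
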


According to Theorem~\ref{thm:red}, it is possible to find an extension-space $\hat{\Xm}$ of dimension $|\Zm| - |\Sm|$. It is also possible to construct a reduced-order compensator by defining a projection map on the quotient space $\Zm/\Sm$. 
By the equivalence $e_i\sim v_i$, the projection is uniquely determined by the set $\Zm\smallsetminus\Sm$, as shown in the following Lemma, whose proof is omitted for lack of space.

\begin{lemma}\label{lem:proj}
Let \( \Xm = \mathbb{R}^n \), and let \( \Zm \subseteq \Xm \) be the subspace spanned by a subset of the canonical basis vectors, i.e., \( \Zm = \mathrm{span}\{e_{i_1}, \dots, e_{i_k}\} \). Let \( \Sm = \mathrm{span}\{e_{j_1}, \dots, e_{j_r}\} \subseteq \Zm \), with \( \{j_1, \dots, j_r\} \subseteq \{i_1, \dots, i_k\} \). Define the linear map \( P : \mathbb{R}^n \to \mathbb{R}^{k - r} \) by $P x = \begin{bmatrix} e_{m_1} & \dots & e_{m_{k-r}} \end{bmatrix}^\top x$,
where \( \{m_1, \dots, m_{k-r}\} = \{i_1, \dots, i_k\} \smallsetminus \{j_1, \dots, j_r\} \). Then \( P \) is a projection from \( \Zm \) onto the quotient space \( \Zm / \Sm \), with the following properties:
\begin{enumerate}[label=(\alph*)]
    \item \( P \) extracts the coordinates of \( x \in \mathbb{R}^n \) corresponding to the $k-r=|\Zm|-|\Sm|$ nodes in \( \Zm \smallsetminus \Sm \);
    \item \( \ker P|_{\Zm} = \Sm \) and \( \im P|_{\Zm} \cong \Zm / \Sm \cong \mathbb{R}^{k - r} \);\label{item:b_lem_proj}
    \item \( PP^\top = I_{k - r} \).
\end{enumerate}
\end{lemma}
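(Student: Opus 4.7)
The proof proposal is based on the simple observation that $P$ is, by construction, a matrix whose rows are distinct canonical basis vectors of $\mathbb{R}^n$, namely $e_{m_1}^\top,\dots,e_{m_{k-r}}^\top$. All three properties follow almost directly from this structural fact, so the plan is just to unpack the definition in the right order.

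First, for item $(a)$, I would simply compute the action of $P$ on a generic vector $x \in \mathbb{R}^n$: the $\ell$-th component of $Px$ is $e_{m_\ell}^\top x = x_{m_\ell}$. Hence $P$ selects precisely the coordinates of $x$ indexed by the set $\{m_1,\dots,m_{k-r}\} = \{i_1,\dots,i_k\} \smallsetminus \{j_1,\dots,j_r\}$, which under the node-coordinate identification $v_i \sim e_i$ corresponds to the nodes in $\Zm \smallsetminus \Sm$.

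Next, for item $(b)$, I would argue both inclusions of $\ker P|_\Zm = \Sm$ by exploiting the fact that $\Sm$ and the complement $\Zm \smallsetminus \Sm$ are spanned by disjoint sets of canonical basis vectors. Specifically, any $x \in \Sm$ has support contained in $\{j_1,\dots,j_r\}$, which is disjoint from $\{m_1,\dots,m_{k-r}\}$, so $Px = 0$; conversely, any $x \in \Zm$ with $Px=0$ has $x_{m_\ell}=0$ for all $\ell$, leaving nonzero components only at indices in $\{j_1,\dots,j_r\}$, so $x \in \Sm$. The isomorphism $\im P|_\Zm \cong \Zm/\Sm$ then follows from the first isomorphism theorem applied to $P|_\Zm : \Zm \to \mathbb{R}^{k-r}$; surjectivity onto $\mathbb{R}^{k-r}$ is immediate because $Pe_{m_\ell}$ is the $\ell$-th standard basis vector of $\mathbb{R}^{k-r}$.

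Finally, for item $(c)$, the identity $PP^\top = I_{k-r}$ reduces to the computation $(PP^\top)_{ab} = e_{m_a}^\top e_{m_b} = \delta_{ab}$, valid because the indices $m_1,\dots,m_{k-r}$ are pairwise distinct by construction. There is no real obstacle here: the lemma is essentially a verification that a matrix whose rows are distinct canonical vectors behaves as expected, and the only thing one must be careful about is keeping the index sets disjoint throughout the argument, which is guaranteed by the assumption $\Sm \subseteq \Zm$ together with the fact that both are spanned by canonical basis vectors.
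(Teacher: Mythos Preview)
Your argument is correct: each of the three items is a direct consequence of the fact that the rows of $P$ are distinct canonical basis vectors of $\mathbb{R}^n$, and you have unpacked this cleanly (the use of the first isomorphism theorem for item~(b) is exactly the right tool). The paper itself omits the proof of this lemma for lack of space, so there is no alternative approach to compare against; your verification is precisely the kind of elementary check the authors had in mind.
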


\begin{theorem}[\textbf{Reduced-order compensator}]\label{thm:reduced}
Let $(\Sm,\Zm)$ be a $(C,A,B)$-pair. Consider the full-order compensator $(K,L,M,G)$ defined by~\eqref{eq:comp} and a projection $P$ from \( \Zm \) onto the quotient space \( \Zm/\Sm \) as in Lemma~\ref{lem:proj}. The reduced dynamical feedback
    \beq\label{eq:red_comp}
    \begin{split}
        \dot{\hat{x}}_r&=PKP^\top\hat{x}_r+PLy,\\
        u&=-MP^\top\hat{x}-Gy,
    \end{split}
    \eeq
    is s.t. $\Sm=\cap(\Wm_c)$ and $\Zm=\pi(\Wm_c)$ for the $A_c$-invariant subspace of $\Xm\times\Zm/\Sm$ defined by $\Wm_c=\{\begin{bmatrix}
x^\top
&
x^\top P^\top
\end{bmatrix}^\top\,\,\text{s.t.}\,\,x\in\Zm\}$, with $A_c$ equal to
   \begin{equation*}
\begin{bmatrix}
A - BGC & -B(F-GC)P^\top \\
P(H-BG)C & P(A-BF-HC+BGC)P^\top
\end{bmatrix}.
   \end{equation*} The 4-tuple $(PKP^\top,PL,MP^\top,G)$ is called reduced-order compensator of order $|\Zm|-|\Sm|$.  
\end{theorem}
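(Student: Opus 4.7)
The plan is to assemble the reduced-order compensator by combining the friend formulas established in Theorems~\ref{thm:friend}, \ref{thm:output_inj}, and \ref{thm:G} with the full- and reduced-order compensator constructions in Theorems~\ref{thm:full} and \ref{thm:reduced}, and then to verify that the residual terms $K_p$, $H_p$, $F_p$ vanish after the projection $P$ of Lemma~\ref{lem:proj}.

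First, I would record the geometric consequences of the assumptions. The condition $\Vmin=\partial_+(\Zm^\ast(\Vmin),A)$ forces $\Vmin\subseteq\Zm^{\ast\perp}$, so in particular $\Vmin\cap\Sm^\ast=\emptyset$ because $\Sm^\ast\subseteq\Zm^\ast$. Dually, $\Vmout=\partial_-(\Sm^\ast(\Vmout),A)$ forces $\Vmout\subseteq\Sm^\ast$, hence $\Vmout\cap\Zm^{\ast\perp}=\emptyset$. These orthogonality facts allow $C^\top C$ and $BB^\top$ to be read as the orthogonal projections onto $\Vmout$ and $\Vmin$ respectively, and they guarantee that the pseudo-inverse simplifications used in Theorem~\ref{thm:friend} still apply. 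Then Theorems~\ref{thm:friend} and \ref{thm:output_inj} immediately yield that the unique friends of $\Zm^\ast(\Vmin)$ and $\Sm^\ast(\Vmout)$ are $F=B^\top A+F_p$ and $H=AC^\top+H_p$, with $F_p=M(Z^\perp)^\top$ and $H_p=SN$.

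Next I would handle $G$. Existence of a matrix with $(A-BGC)\Sm^\ast\subseteq\Zm^\ast$ is Lemma~\ref{lem:cab_SZ}. For uniqueness, I would reproduce the argument of Theorem~\ref{thm:G}: any edge leaving $\Sm^\ast$ but landing outside $\Zm^\ast$ must, by the controlled invariance of $\Zm^\ast$, land in $\Vmin=\partial_+(\Zm^\ast,A)$ and, by the conditioned invariance of $\Sm^\ast$, originate in $\Vmout=\partial_-(\Sm^\ast,A)$; the feedback action $BGC$ can only act through the projectors $BB^\top$ and $C^\top C$, so the unique minimal choice that exactly cancels this set of arcs is $G=B^\top AC^\top$. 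Plugging $F$, $H$, $G$ into the full-order formulas of Theorem~\ref{thm:full} gives, by direct algebraic substitution, $M=F-GC=B^\top A-B^\top AC^\top C+F_p$, $L=H-BG=AC^\top-BB^\top AC^\top+H_p$, and $K=A-BF-HC+BGC=A-BB^\top A-AC^\top C+BB^\top AC^\top C-(BF_p+H_pC)$, i.e., the claimed expressions with $K_p=BF_p+H_pC$.

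Finally, I would pass to the reduced-order compensator via the projection $P$ of Lemma~\ref{lem:proj} onto $\Zm^\ast\smallsetminus\Sm^\ast$, whose order is $|\Zm^\ast|-|\Sm^\ast|$ as required by Theorem~\ref{thm:red}. The uniqueness claim for $(PKP^\top,PL,MP^\top,G)$ reduces to showing that the free pieces are killed by the projection: since $F_p=M(Z^\perp)^\top$ annihilates $\Zm^\ast$ and $\im P^\top\subseteq\Zm^\ast$, one obtains $F_pP^\top=0$; since $H_p=SN$ has range in $\Sm^\ast$ and $P$ vanishes on $\Sm^\ast$ by item~\ref{item:b_lem_proj} of Lemma~\ref{lem:proj}, $PH_p=0$; combining these with $K_p=BF_p+H_pC$ gives $PK_pP^\top=(PB)(F_pP^\top)+(PH_p)(CP^\top)=0$. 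Hence the projected compensator depends only on $A$, $B$, $C$, making it unique. The main obstacle is likely to be the careful justification of the uniqueness of $G$ in the genuine $(C,A,B)$-pair case (where $\Sm^\ast\subsetneq\Zm^\ast$), since Theorem~\ref{thm:G} is stated for a single controlled-and-conditioned invariant $\Wm$; one must adapt its projection argument by observing that all cancelled arcs still lie in the rectangle $\Vmout\times\Vmin$ prescribed by the boundaries of $\Sm^\ast$ and $\Zm^\ast$.
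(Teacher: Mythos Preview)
Your proposal does not address the statement you were asked to prove. Theorem~\ref{thm:reduced} is a \emph{general} result: for an arbitrary $(C,A,B)$-pair $(\Sm,\Zm)$, starting from the full-order compensator $(K,L,M,G)$ of Theorem~\ref{thm:full} and the projection $P$ of Lemma~\ref{lem:proj}, the projected 4-tuple $(PKP^\top,PL,MP^\top,G)$ yields an $A_c$-invariant subspace $\Wm_c$ with $\cap(\Wm_c)=\Sm$ and $\pi(\Wm_c)=\Zm$. There are no hypotheses of the form $\Vmin=\partial_+(\Zm^\ast,A)$ or $\Vmout=\partial_-(\Sm^\ast,A)$, no claim that $G=B^\top AC^\top$, and no uniqueness assertion. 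What you have written is, almost verbatim, a proof of Theorem~\ref{thm:COM} --- the \emph{specialization} of the compensator to the network setting under those extra boundary assumptions --- not of Theorem~\ref{thm:reduced}. Worse, your plan explicitly invokes ``the reduced-order compensator constructions in Theorems~\ref{thm:full} and \ref{thm:reduced}'' as an ingredient, so the argument is circular with respect to the very statement to be proved.

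The paper's own proof of Theorem~\ref{thm:reduced} is simply a reference to Proposition~2.3 of \cite{schumacher1980compensator}: this is a classical compensator-synthesis fact, and the content of a direct proof would be to check that $\Wm_c=\{\,[x^\top\ \ x^\top P^\top]^\top : x\in\Zm\,\}$ is $A_c$-invariant using $(A-BF)\Zm\subseteq\Zm$, $(A-HC)\Sm\subseteq\Sm$, $(A-BGC)\Sm\subseteq\Zm$, together with $\ker P|_\Zm=\Sm$ and $PP^\top=I$. None of the friend formulas $F=B^\top A+F_p$, $H=AC^\top+H_p$, or the cancellation identities $F_pP^\top=0$, $PH_p=0$ are relevant at this level of generality; they belong to Theorem~\ref{thm:COM}.
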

\begin{proof}
See Proposition~2.3 of \cite{schumacher1980compensator}.
\end{proof}

\bibliographystyle{plain}

\end{document}